\newtheorem{thm}{Theorem}[section]
\newtheorem{cor}[thm]{Corollary}
\newtheorem{lem}[thm]{Lemma}
\newtheorem{prop}[thm]{Proposition}
\theoremstyle{definition}
\newtheorem{Def}[thm]{Definition}
\newtheorem{rem}[thm]{Remark}
\newtheorem{cond}[thm]{Condition}
\newtheorem{ex}{Example}[section]
\newtheorem{conj}[thm]{Conjecture}
\newtheorem{quest}[thm]{Question}
\numberwithin{equation}{section}
\numberwithin{figure}{section}
\def\End{{\text{\rm{End}}}}
\def\tr{{\text{\rm{tr}}}}
\def\rchi{{\hbox{\raise1.5pt\hbox{$\chi$}}}}
\def\Aut{{\text{\rm{Aut}}}}
\def\isom{\cong}
\def\tensor{\otimes}
\def\dsum{\oplus}
\def\a{\alpha}
\def\b{\beta}
\def\lam{\lambda}
\def\gam{\gamma}
\def\Gam{\Gamma}
\def\sig{\sigma}
\def\tpi{{\tilde{\pi}}}
\def\tsig{{\tilde{\sigma}}}
\def\TSig{{\widetilde{\Sigma}}}
\def\Sym{{\text{\rm{Sym}}}}
\def\Spec{{\text{\rm{Spec}}}}
\def\Arg{{\text{\rm{Arg}}}}
\def\Proj{{\text{\rm{Proj}}}}
\def\supp{{\text{\rm{supp}}}}
\def\Airy{{\text{\rm{Airy}}}}
\def\Pic{{\text{\rm{Pic}}}}
\def\Gal{{\text{\rm{Gal}}}}
\def\NS{{\text{\rm{NS}}}}
\def\Ext{{\text{\rm{Ext}}}}
\def\Dol{{\text{\rm{Dol}}}}
\def\deR{{\text{\rm{deR}}}}
\def\rank{{\text{\rm{rank}}}}
\newcommand{\bea}{\begin{eqnarray}}
\newcommand{\eea}{\end{eqnarray}}
\newcommand{\be}{\begin{equation}}
\newcommand{\ee}{\end{equation}}
\newcommand{\Mbar}{{\overline{\mathcal{M}}}}
\newcommand{\bA}{{\mathbb{A}}}
\newcommand{\bP}{{\mathbb{P}}}
\newcommand{\bC}{{\mathbb{C}}}
\newcommand{\bF}{{\mathbb{F}}}
\newcommand{\bH}{{\mathbb{H}}}
\newcommand{\bQ}{{\mathbb{Q}}}
\newcommand{\bR}{{\mathbb{R}}}
\newcommand{\bZ}{{\mathbb{Z}}}
\newcommand{\cM}{{\mathcal{M}}}
\newcommand{\cD}{{\mathcal{D}}}
\newcommand{\cE}{{\mathcal{E}}}
\newcommand{\cO}{{\mathcal{O}}}
\newcommand{\cS}{{\mathcal{S}}}
\newcommand{\la}{{\langle}}
\newcommand{\ra}{{\rangle}}
\newcommand{\half}{{\frac{1}{2}}}
\newcommand{\bq}{{\mathbf{q}}}
\newcommand{\rar}{\rightarrow}
\newcommand{\lrar}{\longrightarrow}
\newcommand{\fh}{\bC[\![\hbar]\!]}
\begin{document}
\allowdisplaybreaks

\newcommand{\arXivNumber}{1702.00511}

\renewcommand{\thefootnote}{}

\renewcommand{\PaperNumber}{036}

\FirstPageHeading

\ShortArticleName{Interplay between Opers, Quantum Curves, WKB Analysis, and Higgs Bundles}

\ArticleName{Interplay between Opers, Quantum Curves,\\ WKB Analysis, and Higgs Bundles\footnote{This paper is a~contribution to the Special Issue on Integrability, Geometry, Moduli in honor of Motohico Mulase for his 65th birthday. The~full collection is available at \href{https://www.emis.de/journals/SIGMA/Mulase.html}{https://www.emis.de/journals/SIGMA/Mulase.html}}}

\Author{Olivia DUMITRESCU~$^{\rm ab}$ and Motohico MULASE~$^{\rm cd}$}

\AuthorNameForHeading{O.~Dumitrescu and M.~Mulase}

\Address{$^{\rm a)}$~Department of Mathematics, University of North Carolina at Chapel Hill,\\
\hphantom{$^{\rm a)}$}~340 Phillips Hall, CB 3250, Chapel Hill, NC 27599--3250 USA}
\EmailD{\href{mailto:dolivia@unc.edu}{dolivia@unc.edu}}

\Address{$^{\rm b)}$~Simion Stoilow Institute of Mathematics, Romanian Academy,\\
\hphantom{$^{\rm b)}$}~21 Calea Grivitei Street, 010702 Bucharest, Romania}

\Address{$^{\rm c)}$~Department of Mathematics, University of California, Davis, CA 95616--8633, USA}
\EmailD{\href{mailto:mulase@math.ucdavis.edu}{mulase@math.ucdavis.edu}}

\Address{$^{\rm d)}$~Kavli Institute for Physics and Mathematics of the Universe, The University of Tokyo,\\
\hphantom{$^{\rm d)}$}~Kashiwa, Japan}

\ArticleDates{Received December 31, 2019, in final form March 12, 2021; Published online April 09, 2021}

\Abstract{\textit{Quantum curves} were introduced in the physics literature. We develop a mathe\-matical framework for the case associated with Hitchin spectral curves. In this context, a~quantum curve is a Rees $\mathcal{D}$-module on a smooth projective algebraic curve, whose semi-classical limit produces the Hitchin spectral curve of a Higgs bundle. We give a method of~quantization of~Hitchin spectral curves by concretely constructing one-parameter deformation families of \textit{opers}. We propose a variant of the topological recursion of Eynard--Orantin and Mirzakhani for the context of singular Hitchin spectral curves. We show that a~PDE version of topological recursion provides all-order WKB analysis for the Rees $\mathcal{D}$-modules, defined as the quantization of Hitchin spectral curves associated with meromorphic ${\rm SL}(2,\mathbb{C})$-Higgs bundles. Topological recursion can be considered as a process of quantization of Hitchin spectral curves. We prove that these two quantizations, one via the construction of families of opers, and the other via the PDE recursion of topological type, agree for holomorphic and meromorphic ${\rm SL}(2,\mathbb{C})$-Higgs bundles. Classical differential equations such as the Airy differential equation provides a typical example. Through these classical examples, we see that quantum curves relate Higgs bundles, opers, a conjecture of Gaiotto, and quantum invariants, such as Gromov--Witten invariants.}

\Keywords{quantum curve; Hitchin spectral curve; Higgs field; Rees $\mathcal{D}$-module; opers; non-Abelian Hodge correspondence; mirror symmetry; Airy function; quantum invariants; WKB approximation; topological recursion}

\Classification{14H15; 14N35; 81T45; 14F10; 14J26; 33C05; 33C10; 33C15; 34M60; 53D37}


\renewcommand{\thefootnote}{\arabic{footnote}}
\setcounter{footnote}{0}

\section{Introduction}

The purpose of this paper is to construct a
geometric theory of \textit{quantum curves}. The~notion of~quantum curves
was
introduced in
the physics literature (see for example,
\cite{ADKMV, DFM,DHS, DHSV, GHM,GS, Hollands,Mar2, Norbury, Sch, Teschner}). A~quantum curve is supposed to
compactly capture
topological invariants, such as certain
Gromov--Witten invariants, Seiberg--Witten
invariants, and quantum knot polynomials.
Geometrically,
a quantum curve is a unique
quantization of the B-model
geometry, when it is encoded in a
holomorphic curve, that gives a generating function
of A-model theory of genus $g$ for all $g\ge 0$.
In a broad setting, a quantum curve can be
a differential operator, a difference operator,
a mixture of them, or a linear operator defined by a
trace-class kernel function.

The geometric theory we present here is
focused on the process of
\textit{quantization}
of Hitchin spectral curves~\cite{H1,H2}. A~concise overview of our theory is available in~\cite{O-Paris}.
In Definitions~\ref{Def:QC holomorphic}
and~\ref{Def:QC meromorphic}, we introduce
a quantum curve
as a \textit{Rees $\cD$-module}
on a smooth projective algebraic curve $C$
whose \emph{semi-classical limit}
is the Hitchin spectral curve associated with
a~Higgs bundle
on $C$. The~process of quantization is
therefore an assignment
of a Rees $\cD$-module to every Hitchin
spectral curve.

The \textit{Planck constant} $\hbar$ is a
deformation parameter that appears in the
definition of Rees $\cD$-modules. For~us, it has
a geometric meaning, and is naturally identified with
 an element
\be\label{Planck}
\hbar \in H^1(C, K_C),
\ee
where $K_C$ is the canonical sheaf over $C$. The~cohomology group $H^1(C, K_C)$
controls the deformation of
a classical object, i.e., a geometric object
such as a Higgs bundle in our case, into
a~quantum object, i.e.,
a non-commutative quantity such
as a differential operator. In our case, the
result of quantization is
an \textit{oper}.

Using a fixed choice of a
theta characteristic
and a projective structure on $C$, we determine
a unique quantization of the Hitchin
spectral curve of a holomorphic
or meromorphic ${\rm SL}(r,\bC)$-Higgs bundle
through a concrete construction of an $\hbar$-family
of ${\rm SL}(r,\bC)$-opers on $C$,
as proved in
Theorem~\ref{thm:quantization-holomorphic}
for holomorphic case,
and in Theorem~\ref{thm:quantization-meromorphic}
for meromorphic case. The~$\hbar$-family interpolates
opers and Higgs fields.
We then prove, in Theorem~\ref{thm:SCL}, that
the Rees $\cD$-module
 as the quantization result
recovers the starting Hitchin spectral curve
via semi-classical limit of~WKB analysis.
This is our main theorem of the paper.
When we choose the projective structure
of~$C$ of genus $g\ge 2$
coming from the Fuchsian uniformization,
our construction of~opers is the same as
those opers predicted by a conjecture
of Gaiotto~\cite{G},
as explained in~Section~\ref{sub:Gaiotto}.
 This conjecture
has been solved in~\cite{DFKMMN} {(see~\cite{CW} for a subsequent
development.)}

It has been noticed that \textit{topological
recursion} {of Eynard--Orantin~\cite{EO1,EO1+} and others~\cite{CEO,E2004,
E2011}, and also
its
more recent generalizations (such as~\cite{ABO,BCHORS,KS2017} and
many papers cited in these articles),}
provide another aspect of quantization. A~notable one is
 the \emph{remodeling
conjecture} of~Mari\~no
\cite{Marino} and his collaborators~\cite{BKMP,BKMP2}, and its
complete solution by
mathematicians~\cite{FLZ, FLZ2}. {(For many earlier contributions
to the remodeling conjecture and physics oriented discussions, we refer to the references cited in~\cite{EO2,FLZ}.)}
From this point of view,
 a quantum curve is a~quantization of
 B-model geometry that is obtained
 as an application of topological recursion. It~then becomes
 a natural question:

 \begin{quest}
 \label{quest:TR and QC}
 What is the relation
 between quantization
 via topological recursion and the quantization
 through our construction of
 Rees $\cD$-modules from Hitchin
 spectral curves?
 \end{quest}

Topological recursion was originally developed as
a computational mechanism to
calculate the multi-resolvent
correlation functions of random matrices
(see~\cite{CEO, E2004, EO1} and references cited there). As mentioned above,
it generates a mirror symmetric
B-model counterpart of genus $g$ A-model
for all $g\ge 0$.
This correspondence
has been rigorously established for many examples
(see for example,~\cite{BHLM, DoMan, OM3, DMSS, DMNPS, EMS,
 FLZ, FLZ2,MP2012, MSS, MS, MZ, Norbury, Sch}, and others).
 Yet so far still no clear geometric
 relation between topological
 recursion and quantum curves (in particular, when
they appear as difference operators) has been
 established.

Another tantalizing subject is the relation between the
theory of $\tau$-functions and topo\-lo\-gi\-cal recursion/quantum curves. The~present paper does not attempt to address this relation. The~subject presented in Section~\ref{sect:opers}
is closely related to the work of~\cite{BDY,HK,Sch}, in terms of~for\-malism and
mathematical structures. No deep understanding is offered in this paper.

Among the earliest striking applications of
 topological recursion in
algebraic geometry, there are \emph{new} proofs
obtained in
\cite{MZ} for the
Witten conjecture on cotangent class intersection
numbers
and the $\lam_g$-conjecture. Indeed, these celebrated
formulas are straightforward consequences
of the \emph{Laplace transform} of a
combinatorial formula known as the
\emph{cut-and-join equations} of~\cite{GJ,V}.

Applications of topological recursion to enumerative problems
are effective when
the spectral curve
in the theory is of genus $0$. In this case, the residue
calculations required in the formalism of~\cite{EO1}
can be
explicitly performed. The~computational aspect of the formalism
as a tool is not effective
 in a more
general context, such as when the spectral curve
is a high genus
 non-hyperelliptic curve, or has singularities.

A novel approach proposed in~\cite{OM1}
is the implementation of~PDE
recursions of~topological type, which appear
naturally in enumerative geometry
problems, to the context of~Hitchin
spectral curves. It~replaces the
integral topological recursion formulated in terms of~residue
calculations at the ramification
divisor of~a spectral curve
by a recursive set of~partial
differential equations that captures \emph{local}
nature of~topological recursion.
As we explain in Section~\ref{sect:TR},
the main difference of~the two recursion formulas
lies in the choice of~contours
of~integration in the original format of~integral
topological recursion. All other ingredients
are similar. For a~genus~$0$ spectral curve, the two sets of~recursions are equivalent. In general, these two
recursions aim at achieving different goals. The~original choice of~contours should
capture some global nature of~\emph{periods}
hidden in the quantum invariants. Due to the
difficulties of~residue calculations of~higher genus curves, still we do not have a full understanding in this direction. The~PDE recursion of~topological type~\cite{OM1, OM2},
on the other hand,
captures local nature of~the functions involved,
and leads to an all-order WKB analysis of~quantum curves for ${\rm SL}(2,\bC)$-Higgs bundles. The~issue of~singular spectral curves is addressed
in~\cite{OM2}, in which we have developed a
systematic process of~normalization
of~singular Hitchin spectral curves associated
with meromorphic rank~$2$ Higgs bundles.

Theorem~\ref{thm:WKB} is
 our answer to Question~\ref{quest:TR and QC}. It~states
that for the case of ${\rm SL}(2,\bC)$, the nor\-ma\-lization
process of~\cite{OM2} and the PDE
recursion
of~\cite{OM1} produce an all-order
WKB expansion for the meromorphic
Rees $\cD$-modules obtained by quantizing
singular Hitchin spectral curves through
the construction of $\hbar$-families of opers.
In this sense, our result shows that
quantization of Hitchin
spectral curves, singular or non-singular,
through the PDE recursion of topological type
and construction of $\hbar$-family of opers
are equivalent, for the case of ${\rm SL}(2,\bC)$-Higgs bundles.

We note a relation between meromorphic
Higgs bundles over $\bP^1$ and
Painlev\'e equations~\cite{B2001}. An~application of topological recursion to establishing
new results in Painlev\'e theory
and construction of associated quantum curves
are presented in~\cite{IMS,IS}.

The interplay between Rees $\cD$-modules,
$\hbar$-families of opers, Hitchin
spectral curves as semi-classical
limit, Gaiotto's correspondence, and WKB
analysis through PDE recursion of topological type,
creates a sense of inevitability of the notion of
quantization. Section~\ref{sect:Airy}
serves as an overview to this interplay,
where we present the \emph{Airy differential equation} as
a prototypical example.

\looseness=1
A totally new mathematical
framework is presented in~\cite{KS2017}, in which Kontsevich and Soibelman
formulate topological recursion as a special case
of \textit{deformation quantization}.
They call the formalism \textit{Airy structures}.
 In their work, spectral curves no longer
serve as input data for topological recursion.
Although construction of quantum curves is not the
only purpose of the original
 topological recursion, what we
present in our current paper is that our general
procedure
of quantization of Hitchin
spectral curves has nothing to do with individual
spectral curve, in parallel to the philosophy of~\cite{KS2017}. As we show in~\eqref{family SCL},
the family of
spectral curves is (re)constructed from
our deformation family of Rees $\cD$-modules,
not the other way
around. Yet at this moment
we do not have a mechanism
to give the WKB expansion directly for the family of
Rees $\cD$-modules,
without studying individual
spectral curves. Investigating a possible
connection between the Airy structures of~\cite{KS2017} and this paper's results
is a future subject. A~relation between
quantum curves and deformation quantization
was first discussed in~\cite{Petit}.

Let us briefly describe our quantization
process of this paper now. Our geometric setting
is~a~smooth projective algebraic
curve $C$ over $\bC$
of an arbitrary genus $g=g(C)$
with a choice of~a~\emph{spin structure},
or a \emph{theta characteristic},
$K_C^\half$.
There are $2^{2g}$ choices of such
spin structures. We choose any one of them.
Let $(E,\phi)$ be an
${\rm SL}(r,\bC)$-Higgs bundle on $C$
with a meromorphic
Higgs field $\phi$. Denote by
\begin{gather*}
\overline{T^*C}:= \bP(K_C\dsum \cO_C)
\overset{\pi}{\lrar} C
\end{gather*}
the compactified cotangent bundle of~$C$
(see~\cite{BNR,KS2013}),
which is a ruled surface on the base $C$. The~\emph{Hitchin spectral curve}
\begin{gather*}
\xymatrix{
\Sigma \ar[dr]_{\pi}\ar[r]^{i}
&\overline{T^*C}\ar[d]^{\pi}
\\
&C		}
\end{gather*}
 for a meromorphic
Higgs bundle is defined as the divisor
of zeros on $\overline{T^*C}$
of the characteristic polynomial of
$\phi$:
\begin{gather}
\label{char poly}
\Sigma :=\Sigma(\phi)= \left(\det(\eta - \pi^*\phi)\right)_0,
\end{gather}
where $\eta \in H^0(T^*C, \pi^*K_C)$ is the
tautological $1$-form on $T^*C$ extended as
a meromorphic $1$-form on the
compactification $\overline{T^*C}$. The~morphism $\pi\colon \Sigma\lrar C$ is
a degree $r$ map.

We denote by $\cM_{\Dol}$
the moduli space of holomorphic
stable ${\rm SL}(r,\bC)$-Higgs bundles
on $C$ for~$g\ge 2$. The~assignment of
the coefficients of the characteristic
polynomial~\eqref{char poly} to~$(E,\phi)\in \cM_{\Dol}$
defines the Hitchin fibration
\begin{gather}
\label{Hitchin fibration}
\mu_H\colon\ \cM_{\Dol} \lrar B:=\bigoplus_{i=2}^r H^0(C,K_C^{\tensor i}).
\end{gather}
With the choice of
a spin structure $K_C^\half$ and
Kostant's \emph{principal three-dimensional
subgroup} TDS of~\cite{Kostant}, one constructs
a cross-section $\kappa\colon B\lrar \cM_{\Dol}$.
We denote by $\la H,X_+,X_-\ra\subset
\mathfrak{sl}(r,\bC)$ the Lie algebra of a principal TDS,
where we use the standard representation
as traceless matrices acting on $\bC^r$.
 Thus $H$ is diagonal,
$X_-$ is lower triangular, $X_+ =X_-^t$,
and their relations are
\begin{gather}\label{sl2}
[H,X_\pm ] = \pm 2X_\pm, \qquad
[X_+,X_-] = H.
\end{gather}
The map $\kappa$ is
defined by
\begin{gather*}
B\owns \bq =(q_2,\dots,q_r)
\longmapsto \kappa(\bq)\in
\big(E_0,\phi(\bq)\big)\in \cM_{\Dol},
\end{gather*}
where
\begin{gather*}
E_0:= \big(K_C^\half\big)^H,\qquad
\phi(\bq):= X_-+\sum_{\ell=2}^r q_\ell X_+^{\ell-1}.
\end{gather*}
Clearly $\kappa$ is not a \emph{section of
the fibration} $\mu_H$ in a strict sense, because
$\mu_H\circ \kappa$ is \emph{not} the identity
map of $B$ for $r\ge 3$. But it is a section in a
more general sense that the image of
$\kappa$ always intersects with every fiber of
$\mu_H$ exactly at one point.
Note that $B$ is the moduli space of~Hitchin spectral curves associated
with holomorphic ${\rm SL}(r,\bC)$-Higgs bundles
on $C$. We use an~unconventional
way of defining the universal family $\cS$ of
spectral curves over $B$, instead of~the natural family associated with~\eqref{Hitchin fibration},
rather appealing to the
Hitchin section $\kappa$, as
\begin{gather}
\label{family of spectral}
\xymatrix{
\cS \ar[dr]_p \ar[r]
&B\times \overline{T^*C}\ar[d]^{{\rm pr}_1}
\\
&\;B},
\qquad
p^{-1}(\bq) =\Sigma\big(\phi(\bq)\big).
\end{gather}

Now we choose and fix, once and for all, a
\emph{projective coordinate system}
 of~$C$ subordinating
the complex structure of~$C$. This process
does not depend algebraically on the moduli
space of~$C$. For~a curve of genus $g\ge 2$,
the Fuchsian projective structure,
that appears in our solution~\cite{DFKMMN} to a
conjecture of Gaiotto~\cite{G}, is a natural
choice for our purpose.
As we show in Section~\ref{sect:opers},
there is a unique \emph{filtered extension}
$E_\hbar$ for every $\hbar\in H^1(C,K_C)$. For~$r=2$, $E_\hbar$ is the canonical
extension
\begin{gather*}
0\lrar K_C^\half \lrar E_\hbar \lrar K_C^{-\half}
\lrar 0
\end{gather*}
associated with
\begin{gather*}
\hbar\in H^1(C,K_C) =\Ext^1\Big(K_C^{-\half},K_C^\half\Big).
\end{gather*}
With respect to the projective coordinate
system, we can define a one-parameter
family of {opers}
\begin{gather*}
\big(E_\hbar, \nabla^\hbar(\bq)\big)\in \cM_{\deR}
\end{gather*}
for $\hbar\ne 0$,
where
\be\label{nabla-hbar intro}
\nabla^\hbar(\bq) : = {\rm d}+\frac{1}{\hbar} \phi(\bq),
\ee
${\rm d}$ is the exterior differentiation on C,
and $\cM_{\deR}$ is the moduli space
of holomorphic irreducible ${\rm SL}(r,\bC)$-connections
on $C$. The~sum of the exterior differentiation and
a Higgs field is \emph{not} a~con\-nec\-tion in general.
Here, the point is that the original vector
bundle $E_0$ is deformed to~$E_\hbar$,
and we have chosen a projective coordinate system
on $C$.
Therefore,~\eqref{nabla-hbar intro} makes
sense as~a~glo\-bal connection on $C$ in $E_\hbar$.

Note that $\hbar\nabla^\hbar(\bq)$ is
Deligne's $\hbar$-connection interpolating
a connection ${\rm d}+\phi(\bq)$ and a Higgs field
$\phi(\bq)$. We also note that
$\left(E_\hbar, \hbar\nabla^\hbar(\bq)\right)$ defines
a global Rees $\cD$-module on $C$.
Its generator is a globally defined
differential operator $P$ on $C$
that acts on $K_C^{-\frac{r-1}{2}}$, which
is what we call the \emph{quantum curve}
of the Hitchin spectral curve $\Sigma\big(\phi(\bq)\big)$
corresponding to~$\bq\in B$. The~actual shape~\eqref{order r qc} of $P$ is quite involved
due to non-commutativity of the coordinate
of~$C$ and differentiation. It~is determined in the proof
of Theorem~\ref{thm:SCL}. In
Example~\ref{example} we list quantum curves
$P$ for $r=2,3,4$.
 No matter how
complicated its form is,
the semi-classical limit of $P$ recovers the
spectral curve $\sigma^*\Sigma\big(\phi(\bq)\big)$
of the Higgs field $-\phi(\bq)$, where
\begin{gather}
\label{involution}
\sigma\colon\quad\overline{T^*C}\lrar \overline{T^*C}, \qquad \sigma^2 = 1,
\end{gather}
is the involution defined by the fiber-wise action
of $-1$. This extra sign comes from the
difference of conventions in the
characteristic polynomial~\eqref{char poly}
and the connection~\eqref{nabla-hbar intro}.

The above process can be generalized in a
straightforward way to
\emph{meromorphic} spectral data~$\bq$
for a curve $C$ of arbitrary genus. The~corresponding connections
$\nabla^\hbar(\bq)$, and hence
the Rees $\cD$-modules,
 then have regular and irregular
singularities.

We note that when we use the
Fuchsian projective coordinate system of
a curve $C$ of~ge\-nus \mbox{$g\ge 2$} and holomorphic
${\rm SL}(r,\bC)$-Higgs bundles, our quantization
process is exactly the same as the
construction of ${\rm SL}(r,\bC)$-opers
of~\cite{DFKMMN} that was established
by solving a conjecture of~Gaiotto~\cite{G}.

In Section~\ref{sect:WKB}, we perform a PDE variant of
topological recursion for the case of meromorphic
${\rm SL}(2,\bC)$-Higgs bundles. For~this purpose, we use a normalization
method of~\cite{OM2} for singular
Hitchin spectral curves.
We then show that the PDE recursion
provides the WKB analysis for~the~quantum curve constructed through~\eqref{nabla-hbar intro}.
When we deal with a singular spectral curve
$\Sigma\subset \overline{T^*C}$,
the key question
is how to relate the singular curve with smooth ones.
In~terms of~the Hitchin fibration, a~singular spectral
curve corresponds to a degenerate Abelian variety
in the family.
There are two different approaches to
this question: one is to deform~$\Sigma$ locally
to~a~non-singular curve, and the other is
to blow up $\overline{T^*C}$ and obtain
a~resolution of singularities~$\widetilde{\Sigma}$ of~$\Sigma$.
In this paper we will pursue the second path, and
give a WKB analysis of the quantum curve using the
geometric
information of the desingularization.

Kostant's principal TDS plays a crucial role
in our quantization through the relation~\eqref{sl2}. For~example, it selects a particular
fixed point of $\bC^*$-action on the Hitchin
section, which corresponds to
the $\hbar\rar \infty$ limit of~\eqref{nabla-hbar intro}. It~is counterintuitive,
but this limit is the connection ${\rm d}+X_-$
acting on $E_{\hbar = 1}$, not just $d$ which
looks to be the case from the formula.
This limiting connection then
defines a vector space
structure in the moduli space of opers.

 This paper is organized as follows. The~notion of quantum curves as Rees $\cD$-modules
 quantizing Hitchin spectral curves
 is presented in Section~\ref{sect:QC Rees}.
 Then in Section~\ref{sect:opers},
 we quantize Hitchin spectral curves as Rees
 $\cD$-modules through
 a concrete construction of $\hbar$-families
 of holomorphic and meromorphic
 ${\rm SL}(r,\bC)$-opers. The~semi-classical limit
 of these resulting opers is calculated.
 Since our PDE recursion depends
solely on the
geometry of normalization of
singular Hitchin spectral curves, we provide
detailed study of the blow-up process
in Sections~\ref{sect:spectral}.
 We give the genus formula
 for the normalization of the spectral curve
 in terms of the characteristic polynomial of
 the Higgs field $\phi$.
 Then in Section~\ref{sect:TR}, we
 define topological recursions for the case of~degree $2$ coverings.
 In Section~\ref{sect:WKB},
 we prove that an all-order WKB analysis
 for quantization of~meromorphic ${\rm SL}(2,\bC)$-Hitchin
 spectral curves is established
 through PDE recursion of topological type.
 We thus show that two quantizations
 procedures, one through $\hbar$-family
 of opers and the other through PDE
 recursion, agree for
 ${\rm SL}(2,\bC)$. The~general structure of the theory is~explained
 using the Airy differential equation
 as an example in
 Section~\ref{sect:Airy}. This example shows
 how the WKB analysis computes quantum invariants.

The current paper does not address
difference equations that appear as quantum curves
in knot theory, nor the mysterious
spectral theory of~\cite{Mar2}.

\section[Rees D-modules as quantum curves for Higgs bundles]{Rees $\cD$-modules as quantum curves for Higgs bundles}\label{sect:QC Rees}

In this section, we give the definition of quantum
curves
in the context of Hitchin spectral curves.
 Let $C$ be a non-singular projective
algebraic curve defined over $\bC$. The~sheaf $\cD_C$ of differential operators on $C$ is
the subalgebra of
the $\bC$-linear endomorphism algebra
$\cE{\rm nd}_{\bC}(\cO_C)$
generated by the anti-canonical
sheaf $K_C^{-1}$ and the structure sheaf
$\cO_C$. Here, $K_C^{-1}$ acts
on $\cO_C$ as holomorphic vector fields, and
$\cO_C$ acts on itself by multiplication.
Locally every element of $\cD_C$ is written
as
\begin{gather*}
\cD_C\owns P(x) = \sum_{\ell=0}^r a_\ell(x)
\bigg(\frac{\rm d}{{\rm d}x}\bigg)^{r-\ell}, \qquad
a_\ell(x)\in \cO_C
\end{gather*}
for some $r\ge 0$. For~a fixed $r$, we introduce
the filtration by order of differential operators into~$\cD_C$ as follows:
\begin{gather*}
F_r\cD_C = \bigg\{P(x) = \sum_{\ell=0}^r a_\ell(x)
\bigg(\frac{\rm d}{{\rm d}x}\bigg)^{r-\ell}\,\bigg|\, a_\ell(x)\in \cO_C\bigg\}.
\end{gather*}
The \emph{Rees} ring $\widetilde{\cD_C}$ is defined by
\begin{gather*}
\widetilde{\cD_C} = \bigoplus_{r=0}^\infty
\hbar^r F_r\cD_C \subset \fh\tensor_\bC \cD_C.
\end{gather*}
An element of $\widetilde{\cD_C}$ on a coordinate
neighborhood $U\subset C$ can be written
as
\begin{gather}
\label{local Rees P}
P(x,\hbar) = \sum_{\ell=0}^r a_\ell(x, \hbar)
\bigg(\hbar\frac{\rm d}{{\rm d}x}\bigg)^{r-\ell}.
\end{gather}

\begin{Def}[Rees $\cD$-module, cf.~\cite{MS2002}]
The Rees construction
\begin{gather*}
\widetilde{\cM}
= \bigoplus_{r=0}^\infty
\hbar^r F_r \cM
\end{gather*}
 associated with a~filtered
$\cD_C$-module $(F_\bullet, \cM)$
is a \textit{Rees $\cD$-module}
 if the compatibility condition
$F_a \cD_C\cdot F_b \cM\subset F_{a+b}\cM$
holds.
\end{Def}

Let
\begin{gather*}
D = \sum_{j=1}^n m_j p_j, \qquad m_j >0
\end{gather*}
be an effective
divisor on $C$. The~point set $\{p_1,\dots,p_n\}\subset C$ is
the support of $D$. A~\emph{meromorphic
Higgs bundle} with poles at $D$ is
a pair $(E,\phi)$ consisting of an algebraic
 vector bundle
$E$ on $C$ and a Higgs field
\begin{gather*}
\phi\colon\ E\lrar K_C(D)\tensor_{\cO_C}E.
\end{gather*}
Since the cotangent bundle
\begin{gather*}
T^*C = \Spec\big(\Sym\big(K_C^{-1}\big)\big)
\end{gather*} is the total
space of $K_C$, we have the tautological $1$-form
$\eta\in H^0(T^*C,\pi^*K_C)$ on $T^*C$ coming from the projection
\begin{gather*}
\begin{CD}
T^*C @<<< \pi^*K_C
\\
@V{\pi}VV
\\
C@<<< K_C.
\end{CD}
\end{gather*}
The natural holomorphic symplectic form
of $T^*C$ is given by $-{\rm d}\eta$. The~\textit{compactified cotangent bundle}
of~$C$ is a
ruled surface defined by
\begin{gather*}
\overline{T^{*}C}:=\bP(K_C\dsum \cO_C)
=\Proj \bigg(\bigoplus_{n=0}^\infty\big(
K_C^{-n}\cdot I^0\dsum K_C^{-n+1}\cdot
I \dsum \cdots\dsum K_C^0\cdot I^n\big)\bigg),
\end{gather*}
where $I$ represents $1\in \cO_C$ being
considered as a degree $1$ element. The~divisor at infinity
\begin{gather*}
C_\infty := \bP(K_C\dsum \{0\})
\end{gather*}
{\samepage
is reduced in the ruled surface and supported on the
subset $\bP(K_C\dsum \cO_C)
\setminus T^*C$. The~tautological $1$-form $\eta$
extends on $\overline{T^{*}C}$ as a
meromorphic $1$-form with simple poles
along $C_\infty$. Thus the divisor of $\eta$ in
$\overline{T^{*}C}$ is given by
\begin{gather*}
(\eta) = C_0-C_\infty,
\end{gather*}
where $C_0$ is the zero section of $T^*C$.

}

The relation between the sheaf $\cD_C$
and the geometry of the compactified cotangent
bundle $\overline{T^*C}$ is the following.
First we have
\begin{gather*}
\Spec\bigg(\bigoplus_{m=0}^\infty F_m\cD_C\big/F_{m-1}\cD_C\bigg)
=\Spec\bigg(\bigoplus_{m=0}^\infty K_C^{-m}\bigg)=T^*C.
\end{gather*}
Let us denote by ${\rm gr}_m \cD_C =
F_m\cD_C\big/F_{m-1}\cD_C$.
By writing $I = 1\in H^0(C,\cD_C)$, we then have
\begin{gather*}
\overline{T^*C}=
\Proj\bigg(\bigoplus_{m=0}^\infty \big({\rm gr}_m\cD_C\cdot I^0\dsum
{\rm gr}_{m-1}\cD_C \cdot I \dsum
{\rm gr}_{m-2}\cD_C \cdot I^{\tensor 2} \dsum
\cdots \dsum {\rm gr}_0\cD_C \cdot I^{\tensor m}\big)\!\bigg).
\end{gather*}

\begin{Def}[spectral curve] A \textit{spectral curve} of degree $r$
is a divisor $\Sigma$ in $\overline{T^*C}$ such
that the projection $\pi\colon\Sigma \lrar C$ defined by
the restriction
\begin{gather*}
\xymatrix{
\Sigma \ar[dr]_{\pi}\ar[r]^{i}
&\overline{T^*C}\ar[d]^{\pi}
\\
&C}
\end{gather*}
is a finite morphism of degree $r$. The~\textit{spectral curve of a Higgs
bundle} $(E,\phi)$ is the divisor of~zeros
\begin{gather*}
\Sigma = (\det(\eta - \pi^*\phi))_0
\end{gather*}
on $\overline{T^*C}$ of the characteristic
polynomial $\det(\eta - \pi^*\phi)$. Here,
\begin{gather*}
\pi^*\phi\colon\ \pi^* E \lrar \pi^*(K_C(D))\tensor_{\cO_{\bP(K_C\dsum \cO_C)}}\pi^*E.
\end{gather*}
\end{Def}

\begin{rem}
The Higgs field $\phi$ is holomorphic on
$C\setminus \supp (D)$. Thus we can define the divisor of zeros
\begin{gather*}
\Sigma^\circ=\big(\det(\eta-\pi^*(\phi|_{C\setminus \supp (D)}))\big)_0
\end{gather*}
of the characteristic
polynomial
on $T^*(C\setminus \supp (D))$. The~spectral curve
$\Sigma$ is the complex topology closure of
$\Sigma^\circ$
with respect to the compactification
\begin{gather*}
T^*(C\setminus \supp (D))\subset \overline{T^*C}.
\end{gather*}
\end{rem}

A left $\cD_C$-module $\cE$ on $C$ is naturally
an $\cO_C$-module with a $\bC$-linear integrable (i.e., flat) connection
$\nabla\colon\cE \lrar K_C\tensor_{\cO_C} \cE$. The~construction goes as follows:
\begin{gather}
\label{D-module -> flat connection}
\begin{CD}
\nabla\colon\ \cE @>{\a}>> \cD_C\tensor_{\cO_C}\cE
@>{\nabla_{\!\cD}\tensor {\rm id}}>>
 (K_C \tensor_{\cO_C}\cD_C)\tensor_{\cO_C} \cE
 @>{\b\tensor {\rm id}}>> K_C\tensor_{\cO_C} \cE,
\end{CD}
\end{gather}
where
\begin{itemize}\itemsep=0pt

\item $\a$ is the natural inclusion
$\cE \owns v \longmapsto 1\tensor v\in
\cD_C\tensor_{\cO_C}\cE$,

\item $\nabla_{\!\cD}\colon \cD_C \lrar K_C \tensor_{\cO_C}
\cD_C$ is the connection defined by the
$\bC$-linear left-multiplication operation of
$K_C^{-1}$ on $\cD_C$, which satisfies the
derivation property
\begin{gather}
\label{D-connection}
\nabla_{\!\cD}(f\cdot P) = f\cdot\nabla_{\!\cD}(P)+{\rm d}f \cdot P
\in K_C \tensor_{\cO_C}\cD_C
\end{gather}
for $f\in \cO_C$ and $P\in \cD_C$, and

\item $\b$ is the canonical right $\cD_C$-module
structure in $K_C$ defined by the Lie derivative
of vector fields.
\end{itemize}
If we choose a local coordinate neighborhood
$U\subset C$ with a coordinate $x$, then~\eqref{D-connection} takes the
 following form. Let us
denote by $P' = [{\rm d}/{\rm d}x, P]$, and
 define
\begin{gather*}
\nabla_{\!\!\frac{\rm d}{{\rm d}x}}(P) :=
P\cdot \frac{\rm d}{{\rm d}x} + P'.
\end{gather*}
Then we have
\begin{gather*}
\nabla_{\!\!\frac{\rm d}{{\rm d}x}}(f\cdot P) = f\cdot
\nabla_{\!\!\frac{\rm d}{{\rm d}x}}(P) +\frac{{\rm d}f}{{\rm d}x}\cdot P.
\end{gather*}
The connection $\nabla$ of
\eqref{D-module -> flat connection}
is integrable because
$d^2 = 0$. Actually, the statement is true for
any dimensions. We note that
there is no reason for $\cE$ to be
coherent as an $\cO_C$-module.

Conversely, if an algebraic vector bundle
$E$ on $C$ of rank $r$
admits a holomorphic connection $\nabla\colon
E\lrar K_C\tensor E$, then $E$ acquires the
structure of a~$\cD_C$-module. This is because $\nabla$ is
automatically flat, and
the covariant derivative $\nabla_{\!X}$
for $X\in K_{C}^{-1}$ satisfies
\begin{gather}
\label{covariant}
\nabla_{\!X}(f v) =f \nabla_{\!X}( v) + X(f) v
\end{gather}
for $f\in \cO_C$ and $v\in E$. A~repeated application of~\eqref{covariant} makes
$E$ a $\cD_C$-module. The~fact that every $\cD_C$-module on a curve is
principal implies that for every point $p\in C$,
there is an open neighborhood $p\in U\subset C$ and
a linear differential operator $P$ of
order $r$ on $U$, called a generator, such that
$E|_U\isom \cD_U/\cD_U P$.
Thus on an open curve $U$,
 a holomorphic connection in a
vector bundle of rank $r$ gives rise to a differential
operator of order $r$. The~converse
is true if $\cD_U/\cD_U P$ is $\cO_U$-coherent.

\begin{Def}[formal $\hbar$-connection, cf.~\cite{A}]
A formal $\hbar$-connection on a vector bundle
$E\lrar C$ is a~$\fh$-linear homomorphism
\begin{gather*}
\nabla^\hbar \colon\ \fh\tensor E \lrar \fh\tensor K_C \tensor_{\cO_C} E
\end{gather*}
subject to the derivation condition
\begin{gather*}
\nabla^\hbar (f\cdot v) = f \nabla^\hbar(v) + \hbar\, {\rm d}f\tensor v,
\end{gather*}
where $f\in \cO_C\tensor \fh$ and $v\in \fh\tensor E$.
\end{Def}

When we consider \emph{holomorphic}
dependence of a quantum curve with respect
to the quantization parameter $\hbar$, we need to
use a particular $\hbar$-deformation family of
vector bundles. We will discuss the holomorphic
case in Section~\ref{sect:opers},
where we explain how~\eqref{Planck}
appears in our quanti\-zation.

\begin{rem}
The \textit{classical limit} of a
formal $\hbar$-connection
is the evaluation $\hbar = 0$ of $\nabla^\hbar$, which
is simply an $\cO_C$-module homomorphism
\begin{gather*}
\nabla^0\colon\ E\lrar K_C \tensor_{\cO_C} E,
\end{gather*}
i.e., a holomorphic Higgs field in the vector bundle $E$.
\end{rem}

\begin{rem}
An
$\cO_C\tensor\fh$-coherent
 $\widetilde{\cD_C}$-module
is equivalent to a vector bundle on $C$
equipped with an
$\hbar$-connection.
\end{rem}

In analysis, the \textit{semi-classical limit}
of a differential operator $P(x,\hbar)$ of the form~\eqref{local Rees P} is a~function defined by
\begin{gather}
\label{P-SCL}
\lim_{\hbar\rar 0} \big({\rm e}^{-\frac{1}{\hbar} S_0(x)}P(x,\hbar)
{\rm e}^{\frac{1}{\hbar} S_0(x)}\big) =
\sum_{\ell=0}^r a_\ell(x, 0) (S_0'(x))^{r-\ell},
\end{gather}
where $S_0(x)\in \cO_C(U)$. The~equation
\begin{gather}
\label{SCL=0}
\lim_{\hbar\rar 0} \big({\rm e}^{-\frac{1}{\hbar} S_0(x)}P(x,\hbar)
{\rm e}^{\frac{1}{\hbar} S_0(x)}\big) = 0
\end{gather}
then determines the first term of the \textit{singular
perturbation expansion}, or the \textit{WKB asymptotic expansion},
\begin{gather}
\label{SPE}
\psi(x,\hbar) = \exp\bigg(\sum_{m=0} ^\infty \hbar^{m-1} S_m(x)\bigg)
\end{gather}
of a solution $\psi(x,\hbar)$
to the differential equation
\begin{gather*}
P(x,\hbar)\psi(x,\hbar)=0
\end{gather*}
on $U$. We note that the expression~\eqref{SPE} is never meant to be a
convergent series in $\hbar$.

Since ${\rm d}S_0(x)$ is a local section of $T^*C$
on $U\subset C$, $y=S_0'(x)$ gives a
local trivialization of~$T^*C|_U$, with
$y\in T_x^*C$ a fiber coordinate.
 Then~\eqref{P-SCL} and~\eqref{SCL=0} give an equation
\begin{gather*}
\sum_{\ell=0}^r a_\ell(x,0) y^{r-\ell}=0
\end{gather*}
of a curve in $T^*C|_U$.
This motivates us to give the following
definition:

\begin{Def}[semi-classical limit of a Rees differential operator]
Let $U\subset C$ be an open subset of~$C$
with a local coordinate $x$ such that $T^*C$ is
trivial over $U$ with a fiber coordinate $y$. The~semi-classical limit of a local section
\begin{gather*}
P(x,\hbar) = \sum_{\ell =0}^r a_\ell(x,\hbar)
\left(\hbar \frac{\rm d}{{\rm d}x}\right)^{r-\ell}
\end{gather*}
of the
Rees ring $\widetilde{\cD_C}$ of the
sheaf of differential operators $\cD_C$ on $U$
is the holomorphic function
\begin{gather*}
\sum_{\ell =0}^r a_\ell(x,0)y^{r-\ell}
\end{gather*}
defined on $T^*C|_{U}$.
\end{Def}

\begin{Def}[semi-classical limit of a Rees $\cD$-module]\label{def:SCL}
Suppose a Rees $\widetilde{\cD_C}$-module $\widetilde{\cM}$
globally defined on $C$ is written as
\begin{gather*}
\widetilde{\cM}(U) = \widetilde{\cD_C}(U)\big/\widetilde{\cD_C}(U)P_U
\end{gather*}
on every coordinate neighborhood $U\subset C$
with a differential operator
$P_U$ of the form~\eqref{local Rees P}.
Using this expression~\eqref{local Rees P}
for $P_U$, we construct a meromorphic function
\begin{gather}
\label{local Sigma}
p_U(x,y) = \sum_{\ell=0}^r a_\ell(x,0) y^{r-\ell}
\end{gather}
on $\overline{T^*C}|_U$, where $y$ is the fiber
coordinate of $T^*C$, which is trivialized on $U$.
Define
\begin{gather*}
\Sigma_U = (p_U(x,y))_0
\end{gather*}
as the divisor of zero of the function $p_U(x,y)$.
If $\Sigma_U$'s glue together to a spectral
curve $\Sigma\subset \overline{T^*C}$, then
we call $\Sigma$ the \textit{semi-classical limit}
of the Rees $\widetilde{\cD_C}$-module
$\widetilde{\cM}$.
\end{Def}

\begin{rem}
For the local equation~\eqref{local Sigma}
to be consistent globally on $C$, the
 coefficients of~\eqref{local Rees P}
 have to satisfy
\begin{gather*}
a_\ell(x,0)\in \Gamma\big(U,K_C^{\tensor \ell}\big).
\end{gather*}
\end{rem}

\begin{Def}[quantum curve for holomorphic Higgs bundle]
\label{Def:QC holomorphic}
A \textit{quantum curve} associated with
the spectral curve $\Sigma\subset T^*C$
of a holomorphic Higgs bundle on
a projective algebraic curve~$C$ is a Rees
$\widetilde{\cD_C}$-module $\cE$
 whose semi-classical limit is $\Sigma$.
\end{Def}

The main reason we wish to extend our framework
to meromorphic connections is that
there are no non-trivial holomorphic connections
on $\bP^1$, whereas many important
classical examples of differential equations
are naturally defined over $\bP^1$
with regular and irregular singularities. A~$\bC$-linear homomorphism
\begin{gather*}
\nabla\colon\ E\lrar K_C(D) \tensor_{\cO_C} E
\end{gather*}
is said to be a \emph{meromorphic connection}
with poles along
an effective divisor $D$ if
\begin{gather*}
\nabla(f\cdot v) = f\nabla(v) +{\rm d}f\tensor v
\end{gather*}
for every $f\in \cO_C$ and $v\in E$. Let us denote by
\begin{gather*}
\cO_C(*D) :=\lim_{\lrar}\cO_C(mD),\qquad
E(*D) := E\tensor_{\cO_C} \cO_C(*D).
\end{gather*}
Then $\nabla$ extends to
\begin{gather*}
\nabla\colon \ E(*D)\lrar K_C(*D)
\tensor_{\cO_C(*D)} E(*D).
\end{gather*}
Since $\nabla$ is holomorphic on
$C\setminus \supp (D)$, it induces
a $\cD_{C\setminus \supp (D)}$-module structure
in $E|_{C\setminus \supp (D)}$. The~$\cD_C$-module direct image
$\widetilde{E}=j_*\big(E|_{C\setminus \supp (D)}\big)$
 associated with the open
inclusion map $j\colon C\setminus \supp (D)\lrar C$
is then naturally isomorphic to
\begin{gather}
\label{meromorphic extension}
\widetilde{E}=j_*\big(E|_{C\setminus \supp (D)}\big) \isom E(*D)
\end{gather}
as a $\cD_C$-module.
Equation~\eqref{meromorphic extension} is called
the \emph{meromorphic extension} of
the $\cD_{C\setminus \supp (D)}$-module
$E|_{C\setminus \supp (D)}$.

Let us take a
local coordinate $x$ of~$C$, this time around
 a pole $p_j\in \supp(D)$.
 If a generator~$\widetilde{P}$
 of $\widetilde{E}$ near $x=0$ has a local expression
\begin{gather*}
\widetilde{P}\bigg(x, \frac{\rm d}{{\rm d}x}\bigg) =
x^k \sum_{\ell=0}^r b_\ell(x) \bigg(x \frac{\rm d}{{\rm d}x}\bigg)^{r-\ell}
\end{gather*}
around $p_j$ with locally defined holomorphic functions
$b_\ell(x)$, $b_0(0)\ne 0$, and an integer $k\in \bZ$,
then $\widetilde{P}$ has a \emph{regular} singular point
at $p_j$. Otherwise, $p_j$ is an \emph{irregular}
singular point of $\widetilde{P}$.

\begin{Def}[quantum curve for a meromorphic Higgs bundle]\label{Def:QC meromorphic}
Let $(E,\phi)$ be a meromorphic
Higgs bundle defined over a projective algebraic
curve $C$ of any genus with poles
along an~effective divisor $D$, and
$\Sigma\subset \overline{T^*C}$
 its spectral curve. A~\textit{quantum curve}
associated with $\Sigma$ is the meromorphic
extension of a Rees $\widetilde{\cD_C}$-module
$\cE$ on $C\setminus \supp (D)$ such that the
complex topology closure of its
semi-classical limit $\Sigma^\circ \subset
T^*C|_{C\setminus \supp (D)}$
in the compactified cotangent bundle
$\overline{T^*C}$ agrees with $\Sigma$.
\end{Def}

In Section~\ref{sect:opers}, we prove
that every Hitchin spectral curve
associated with a holomorphic
or~a~meromorphic
${\rm SL}(r,\bC)$-Higgs bundle has a quantum curve.

\begin{rem}
We remark that several examples of quantum
curves that are constructed in~\cite{BHLM, DMNPS,MSS}, for various Hurwitz
numbers
and Gromov--Witten
theory of $\bP^1$,
 do not fall into our definition
 in terms of Rees
 $\cD$-modules. This is because in the
 above mentioned examples, quantum curves
 involve
 \emph{infinite}-order differential operators,
 or \emph{difference} operators, while
 we consider only differential operators of
 finite order in this paper.
\end{rem}

\section{Opers}\label{sect:opers}

There is a simple mechanism to construct
a quantization of a Hitchin spectral curve,
using a~particular choice of isomorphism between a
\emph{Hitchin section} and the moduli
of \emph{opers}. The~quantum deformation parameter
$\hbar$, originated in physics as the
Planck \emph{constant}, is a purely
formal para\-me\-ter in WKB analysis.
Since we will be using the PDE recursion
\eqref{differential TR} for the analysis
of~quantum curves, $\hbar$ plays the role of
a formal parameter for the asymptotic
expansion. This point of view
motivates our
definition of quantum curves as Rees $D$-modules
in the previous section.
However, the quantum curves appearing in
the quantization of Hitchin spectral curves
associated with $G$-Higgs bundles for
a complex simple Lie group $G$
always depend \emph{holomorphically}
on $\hbar$. Therefore, we need a
more geometric setup for quantum curves
to deal with this holomorphic dependence. The~purpose of this section is to explain
\textit{holomorphic $\hbar$-connections
as quantum curves}, and the geometric
interpretation of $\hbar$ given
in~\eqref{Planck}. The~key concept is
\emph{opers} of Beilinson--Drinfeld
\cite{BD}.
Although a vast generalization of the current
paper is possible,
we restrict our attention to~${\rm SL}(r,\bC)$-opers
 for an arbitrary $r\ge 2$
in this paper.

In this section, most of the time
 $C$ is a smooth projective
algebraic curve of genus $g\ge 2$ defined over
$\bC$, unless otherwise specified.


\subsection[Holomorphic SL(r,C)-opers and quantization of Higgs bundles]
{Holomorphic $\boldsymbol{{\rm SL}(r,\bC)}$-opers and quantization of Higgs bundles}

We first recall projective structures on $C$ following
Gunning~\cite{Gun}. Recall that every compact
Riemann surface has a projective structure
subordinating the given complex structure. A~\emph{complex projective coordinate system}
is a coordinate neighborhood covering
\begin{gather*}
C = \bigcup_\a U_\a
\end{gather*}
with a local coordinate $x_\a$ of $U_\a$
such that for every $U_\a \cap U_\b$,
we have a \emph{M\"obius} coordinate transformation
\begin{gather*}
x_\a = \frac{a_{\a\b}x_\b + b_{\a\b}}
{c_{\a\b}x_\b + {\rm d}_{\a\b}},
\qquad
\begin{bmatrix}
a_{\a\b}&b_{\a\b}\\
c_{\a\b}&{\rm d}_{\a\b}
\end{bmatrix}
\in {\rm SL}(2,\bC).
\end{gather*}
Since we solve differential equations on
$C$, we always assume that each
coordinate neighborhood~$U_\a$ is \textit{simply connected}.
In what follows, we choose and fix a projective
coordinate system on~$C$. Since
\begin{gather*}
{\rm d}x_\a = \frac{1}{(c_{\a\b}x_\b + {\rm d}_{\a\b})^2}
\; {\rm d}x_\b,
\end{gather*}
the transition function for the canonical line bundle
$K_C$ of~$C$ is given by
the cocycle
\begin{gather*}
\big\{(c_{\a\b}x_\b + {\rm d}_{\a\b})^2\big\}\qquad
 \text{on} \quad U_\a\cap U_\b.
\end{gather*}
We choose and fix, once and for all,
 a theta characteristic,
or a spin structure, $K_C^\half$ such that
$\Big(K_C^\half\Big)^{\tensor 2} \isom K_C$.
Let $\{\xi_{\a\b}\}$ denote the $1$-cocycle
corresponding to~$K_C^\half$.
Then we have
\begin{gather}
\label{xiab}
\xi_{\a\b} = \pm (c_{\a\b}x_\b + {\rm d}_{\a\b}).
\end{gather}
The choice of $\pm$ here is an element of
$H^1(C,\bZ/2\bZ) = (\bZ/2\bZ)^{2g}$,
indicating that there are $2^{2g}$ choices for
spin structures in $C$.

The significance of the projective coordinate system
lies in the fact that $\partial^2_\b \xi_{\a\b} = 0$.
This simple property plays an essential role
 in our construction of global
connections on $C$, as we see in this section.
Another way of appreciating the projective coordinate
system is the vanishing of Schwarzian derivatives,
as explained in~\cite{O-Paris}. A~scalar valued
single linear ordinary differential equation
of any order can be globally defined in
terms of a projective coordinate.

A holomorphic Higgs bundle $(E,\phi)$ is \emph{stable}
if for every vector subbundle
$F\subset E$ that is invariant with respect to~$\phi$, i.e., $\phi\colon F\lrar F\tensor K_C$,
the slope condition
\begin{gather*}
\frac{\deg F}{\rank\, F} < \frac{\deg E}{\rank\, E}
\end{gather*}
holds. The~moduli space of
stable Higgs bundles
is constructed~\cite{Simpson}. An~${\rm SL}(r,\bC)$-Higgs bundle is a pair
$(E,\phi)$
with a fixed isomorphism $\det E = \cO_C$ and
$\tr \phi = 0$.
We denote by $\cM_{\Dol}$
the moduli space of
stable holomorphic ${\rm SL}(r,\bC)$-Higgs bundles on $C$.
Hitchin~\cite{H1} defines a holomorphic
fibration
\begin{gather*}
\mu_H\colon\quad \cM_{\Dol} \owns (E,\phi)\longmapsto \det(\eta - \pi^*\phi)\in B, \qquad
B := \bigoplus_{\ell =2}^r H^0\big(C,K_C^{\tensor \ell}\big),
\end{gather*}
that induces the structure of
an algebraically completely integrable
Hamiltonian system in $\cM_{\Dol}$.
With the choice of a spin structure $K_C^\half$,
we have a natural section
$\kappa\colon B\hookrightarrow \cM_{\Dol}$
defined by utilizing Kostant's
\emph{principal three-dimensional subgroup}
(TDS)~\cite{Kostant} as follows.

First, let
\begin{gather*}
\mathbf{q}=(q_2,q_3,\dots,q_r)
\in B=\bigoplus_{\ell = 2}^r H^0\big(C,K_C^{\tensor \ell}\big)
\end{gather*}
be an arbitrary point of the Hitchin base $B$.
Define
\begin{gather*}
\begin{aligned}
&X_- := \big[ \sqrt{s_{i-1}} \delta_{i-1,j}\big]_{ij}
=\begin{bmatrix}
0&0&\cdots&0&0\\
\sqrt{s_1}&&&&0\\
0&\sqrt{s_2}&&&0\\
\vdots&&\ddots&&\vdots\\
0&0&\cdots&\sqrt{s_{r-1}}&0
\end{bmatrix},
\qquad X_+ := X_-^t,
\\
& H :=[X_+,X_-],
\end{aligned}
\end{gather*}
where $s_i := i(r-i)$.
$H$ is a diagonal matrix whose
$(i,i)$-entry is $s_i-s_{i-1}
= r - 2i+1$, with $s_0=s_r=0$. The~Lie algebra $\la X_+,X_-,H\ra\isom
\mathfrak{sl}(2,\bC)$ is the Lie algebra of
the principal TDS in~${\rm SL}(r,\bC)$.

\begin{lem}
Define a Higgs bundle
$\left(E_0,\phi(\mathbf{q})\right)$
consisting of a vector bundle
\begin{gather}
\label{E0}
E_0 := \Big(K_C^\half\Big)^{\tensor H}
=\bigoplus_{i=1}^r \Big(K_C^\half\Big)^{\tensor (r-2i+1)}
\end{gather}
and a Higgs field
\begin{gather}
\label{phi q}
\phi(\mathbf{q}): =
X_- + \sum_{\ell = 2}^r q_\ell X_+ ^{\ell-1}.
\end{gather}
Then it is
a stable ${\rm SL}(r,\bC)$-Higgs bundle. The~Hitchin section is defined by
\begin{gather}
\label{Hitchin section}
\kappa\colon\ B\owns \bq
\longmapsto (E_0,\phi(\mathbf{q}))\in \cM_{\Dol},
\end{gather}
which gives a biholomorphic map between
$B$ and $\kappa(B)\subset \cM_{\Dol}$.
\end{lem}

\begin{proof}
We first note that $X_-\colon E_0\lrar E_0\tensor K_C$
is a globally defined $\End_0(E_0)$-valued
$1$-form, since it is a collection of constant maps
\begin{gather}\label{si}
\sqrt{s_i}\colon\ \Big(K_C^\half\Big)^{\tensor (r-2i+1)} \overset{=}{\lrar}
 \Big(K_C^\half\Big)^{\tensor (r-2(i+1)+1)} \tensor K_C.
\end{gather}
Similarly, since $X_+^{\ell-1}$ is an upper-diagonal
matrix with non-zero entries along the
$(\ell-1)$-th upper diagonal, we have
\begin{gather*}
q_\ell\colon\ \Big(K_C^\half\Big)^{\tensor (r-2i+1)}{\lrar}
\Big(K_C^\half\Big)^{\tensor (r-2i+1+2\ell)}=
\Big(K_C^\half\Big)^{\tensor (r-2(i-\ell+1)+1)}\tensor K_C.
\end{gather*}
Thus $\phi(\bq)\colon E_0\lrar E_0\tensor K_C$
is globally defined as a Higgs field in $E_0$. The~Higgs pair is stable because no subbundle of
$E_0$ is invariant under $\phi(\bq)$, unless
$\bq = 0$. And when $\bq=0$, the invariant subbundles
all have positive degrees, since $g\ge 2$.
\end{proof}

The image $\kappa(B)$ is a holomorphic
 Lagrangian submanifold of a holomorphic
 symplectic space~$\cM_{\Dol}$.

To define $\hbar$-connections holomorphically
depending on $\hbar$, we need to construct
a one-pa\-ra\-me\-ter holomorphic
family of deformations of vector bundles
\begin{gather*}
\begin{CD}
E_\hbar @>>> \cE\\
@VVV @VVV\\
C\times \{\hbar\} @>>> C\times H^1(C,K_C)
\end{CD}
\end{gather*}
and a $\bC$-linear first-order differential operator
\begin{gather*}
\hbar \nabla^\hbar\colon\ E_\hbar \lrar E_\hbar \tensor K_C
\end{gather*}
depending holomorphically on~$\hbar\in H^1(C,K_C)\isom \bC$ for $\hbar \ne 0$.
Let us introduce the notion of~\emph{filtered extensions}.

\begin{Def}[filtered extension]\label{def:filtered extension}
A one-parameter
family of filtered holomorphic vector bundles
 $\big(F_\hbar ^\bullet, E_\hbar\big)$ on $C$
with a trivialized determinant
$\det (E_\hbar)\isom \cO_C$ is
a filtered extension of~the vector bundle
$E_0$ of~\eqref{E0}
parametrized by $\hbar\in H^1(C,K_C)$
if the following conditions hold:
\begin{itemize}\itemsep=0pt
\item $E_\hbar$ has a filtration
\begin{gather*}
0=F_\hbar^r
\subset
 F_\hbar^{r-1}
\subset
 F_\hbar^{r-2}
\subset
\cdots
\subset
 F_\hbar^0 = E_\hbar.
\end{gather*}

\item The second term is given by
\be
\label{Fr-1}
F_\hbar^{r-1}=\Big(K_C^\half\Big)^{\tensor (r-1)}.
\ee

\item For every $i = 1, 2, \dots, r-1$,
there is
an $\cO_C$-module isomorphism
\be
\label{filt-iso}
F_\hbar^{i}\big/F_\hbar^{i+1}
\overset{\sim}{\lrar} \left(F_\hbar^{i-1}\big/F_\hbar^i\right)
\tensor K_C.
\ee
\end{itemize}
\end{Def}

\begin{rem}
Since we need to identify a deformation
 parameter $\hbar$ and extension classes,
 we~make the natural identification
 \begin{gather*}
 \Ext^1(E,F) = H^1(C,E^*\tensor F)
 \end{gather*}
 for every pair of vector bundles
 $E$ and $F$. We also identify
 $\Ext^1(E, F)$ as the class of
 extensions
 \begin{gather*}
 0\lrar F\lrar V\lrar E\lrar 0
 \end{gather*}
 of $E$ by a vector bundle $V$.
These identifications are done
by a choice of a projective coordinate system
on $C$ as explained below.
\end{rem}

\begin{prop}[construction of filtered extension]\label{prop:filter}
For every choice of the theta charac\-te\-ris\-tic
$K_C^\half$
and a non-zero element
$\hbar\in H^1(C,K_C)$,
there is a unique
non-trivial
filtered extension~$\big(F_\hbar ^\bullet,E_\hbar\big)$
of $E_0$.
\end{prop}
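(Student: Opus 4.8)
The plan is to build the filtered bundle $(F_\hbar^\bullet, E_\hbar)$ by an iterated sequence of line-bundle extensions, after first reading off the graded pieces from the defining conditions. Writing $L_i := F_\hbar^i/F_\hbar^{i+1}$ for $0\le i\le r-1$, condition \eqref{Fr-1} gives $L_{r-1}=(K_C^\half)^{\tensor(r-1)}$, and \eqref{filt-iso} forces $L_{i-1}\isom L_i\tensor K_C^{-1}$; hence $L_i\isom(K_C^\half)^{\tensor(2i-r+1)}$ for all $i$. In particular $\bigoplus_i L_i$ is exactly the bundle $E_0$ of \eqref{E0}, so any such $(F_\hbar^\bullet,E_\hbar)$ is indeed a filtered extension of $E_0$; moreover $\det E_\hbar\isom\bigotimes_{i=0}^{r-1}L_i\isom(K_C^\half)^{\tensor \sum_{i=0}^{r-1}(2i-r+1)}=\cO_C$, which supplies the required trivialization of the determinant. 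Having defined the $L_i$ this way, condition \eqref{filt-iso} will hold automatically (via the tautological identification of tensor powers of $K_C^\half$) for whatever extensions we build.

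Next I would construct $E_\hbar$ inductively from the bottom of \eqref{filt}: set $F_\hbar^{r-1}:=L_{r-1}$, and, having built the subbundle $F_\hbar^i$ with its filtration, define $F_\hbar^{i-1}$ as the middle term of an extension
\begin{equation*}
0\lrar F_\hbar^i\lrar F_\hbar^{i-1}\lrar L_{i-1}\lrar 0
\end{equation*}
classified by an element of $\Ext^1(L_{i-1},F_\hbar^i)$. The key step is to show that, for each $i=1,\dots,r-1$, this extension group is one-dimensional and \emph{canonically} identified with $H^1(C,K_C)$. Using \eqref{ext id}, $\Ext^1(L_{i-1},F_\hbar^i)=H^1(C,L_{i-1}^{-1}\tensor F_\hbar^i)$, and $L_{i-1}^{-1}\tensor F_\hbar^i$ inherits a filtration whose graded pieces are $L_{i-1}^{-1}\tensor L_j\isom K_C^{\tensor(j-i+1)}$ for $j=i,\dots,r-1$. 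Since $g\ge 2$, Serre duality gives $H^1(C,K_C^{\tensor m})=0$ for $m\ge 2$, so $H^1$ of the subbundle $L_{i-1}^{-1}\tensor F_\hbar^{i+1}$, whose graded pieces all have $m\ge 2$, vanishes; and since $H^2$ of any coherent sheaf on a curve vanishes, the cohomology sequence of $0\rar L_{i-1}^{-1}\tensor F_\hbar^{i+1}\rar L_{i-1}^{-1}\tensor F_\hbar^i\rar K_C\rar 0$ collapses to $H^1(C,L_{i-1}^{-1}\tensor F_\hbar^i)\isom H^1(C,K_C)\isom\bC$.

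Granting this, the conclusion is quick. A one-dimensional $\Ext^1$ has a single non-trivial extension up to scalar, and rescaling the class does not change the isomorphism class of the resulting filtered bundle, so the iterated construction produces, uniquely up to isomorphism of filtered bundles with trivialized determinant, a non-trivial $(F_\hbar^\bullet,E_\hbar)$. Invoking the canonical identification $\Ext^1(L_{i-1},F_\hbar^i)\isom H^1(C,K_C)$ from the previous paragraph, I would take the extension class at every stage to be $\hbar$ itself; once a projective coordinate system on $C$ is fixed this can be written out as explicit upper-triangular \v{C}ech $1$-cocycles — powers of $\xi_{\a\b}$ on the diagonal, and $\hbar$ times a term built from $c_{\a\b}$ and $x_\b$ above it — which also makes the holomorphic dependence on $\hbar$ manifest for later use. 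Non-triviality is then automatic since $\hbar\ne 0$, so $E_\hbar$ is not the split bundle $E_0$.

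The one genuinely delicate point is the $\Ext$-computation of the third paragraph: one must track the long exact sequences carefully and lean on the vanishing $H^1(C,K_C^{\tensor m})=0$ for $m\ge 2$ — which is precisely where the hypothesis $g\ge 2$ is used — to rule out any contribution beyond the single copy of $H^1(C,K_C)$, and to check that the identification with $H^1(C,K_C)$ is canonical (independent of the earlier choices) so that the same parameter $\hbar$ really does govern the extension class at each stage.
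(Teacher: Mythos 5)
Your proposal is correct and follows essentially the same route as the paper: a downward induction constructing each $F_\hbar^{i-1}$ as an extension of the line bundle $L_{i-1}$ by $F_\hbar^i$, with the key computation $\Ext^1(L_{i-1},F_\hbar^i)\isom H^1(C,K_C)\isom\bC$ obtained from the vanishing $H^1(C,K_C^{\tensor m})=0$ for $m\ge 2$ (where $g\ge 2$ enters). The only cosmetic difference is that you organize this vanishing as a d\'evissage on the graded pieces $L_{i-1}^{-1}\tensor L_j\isom K_C^{\tensor(j-i+1)}$, whereas the paper carries the same facts as explicit induction hypotheses.
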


\begin{proof}
First let us examine the case of $r=2$
to see how things work. Since
\begin{gather*}
\hbar \in H^1(C,K_C) =
\Ext^1\Big(K_C^{-\half},K_C^\half\Big)
\isom \bC,
\end{gather*}
we have a unique extension
\begin{gather}
\label{r=2 extension}
0\lrar K_C^\half \lrar E_\hbar \lrar K_C^{-\half} \lrar 0
\end{gather}
corresponding to~$\hbar$. Obviously
\begin{gather*}
K_C^\half \overset{\sim}{\lrar}\Big(E_\hbar \big/K_C^{\half}\Big)\tensor K_C,
\end{gather*}
which proves~\eqref{filt-iso}.
We also note that as a vector bundle of rank $2$,
we have the isomorphism
\begin{gather*}
E_\hbar \isom
\begin{cases}
E_1, \qquad \hbar \ne 0,
\\
E_0, \qquad \hbar = 0.
\end{cases}
\end{gather*}

Now consider the general case. We use the induction
on $i = r-1, r-2, r-3, \dots, 0$, in the reverse direction
to construct each term of the filtration $F_\hbar^i$
subject to~\eqref{Fr-1} and~\eqref{filt-iso}. The~base case $i=r-1$ is the following. Since
\begin{gather*}
0\lrar F_\hbar^{r-1}\lrar F_\hbar^{r-2}\lrar F_\hbar^{r-2}/F_\hbar^{r-1}\lrar 0
\end{gather*}
and
\begin{gather*}
F_\hbar^{r-2}/F_\hbar^{r-1} \isom
F_\hbar^{r-1}\tensor K_C^{-1}\isom
\Big(K_C^\half\Big)^{\tensor (r-1)}\tensor K_C^{-1}
\end{gather*}
from~\eqref{Fr-1} and~\eqref{filt-iso},
$F_\hbar^{r-2}$ is determined by the class $\hbar$ in
\begin{gather*}
\Ext^1\big(F_\hbar^{r-2}/F_\hbar^{r-1},F_\hbar^{r-1}\big)
=H^1\big(C,F_\hbar^{r-1}\tensor \big(F_\hbar^{r-2}/F_\hbar^{r-1}\big)^{-1}\big) \isom H^1(C,K_C).
\end{gather*}
Assume that for a given $i+1$, we have
\begin{gather}
\label{hypo1}
H^1\big(C,F_\hbar^n\tensor \big(F_\hbar^{r-1}\big)^{-1}\tensor
K_C^{\tensor (-n+r)}\big) \isom H^1(C,K_C),
\\
\label{hypo2}
H^1\big(C,F_\hbar^n\tensor \big(F_\hbar^{r-1}\big)^{-1}\tensor
K_C^{\tensor (-n+m+r)}\big)= 0, \qquad m\ge 1,
\end{gather}
for every $n$ in the range $i+1\le n\le r-1$.
We wish to prove that the same relation
holds for~$n=i$.

The sequence of isomorphisms~\eqref{filt-iso} implies that
\begin{gather*}
F_\hbar^{i-1}/F_\hbar^i \isom \big(F_\hbar^{i}/F_\hbar^{i+1}\big)
\tensor K_C^{-1} \isom F_\hbar^{r-1}\tensor K_C^{\tensor(i-r)}
= \Big(K_C^\half\Big)^{\tensor (r-1)}\tensor K_C^{\tensor (i-r)}
\isom K_C^{\tensor\big( i - \frac{r+1}{2}\big)}.
\end{gather*}
Then $F_\hbar^{i-1}$ as an extension
\begin{gather}
\label{ext i-1}
0\lrar F_\hbar^{i} \lrar F_\hbar^{i-1}\lrar
F_\hbar^{r-1}\tensor K_C^{\tensor(i-r)}\lrar 0
\end{gather}
is determined by a class in
\begin{gather*}
\Ext^1\big(F_\hbar^{r-1}\tensor K_C^{\tensor(i-r)},F_\hbar^i\big)
=H^1\big(C,F_\hbar^{i}\tensor\big(F_\hbar^{r-1}\big)^{-1}\tensor K_C^{\tensor(-i+r)}\big).
\end{gather*}
The exact sequence
\begin{gather*}
0\lrar F_\hbar^{n} \lrar F_\hbar^{n-1}\lrar
F_\hbar^{r-1}\tensor K_C^{\tensor(n-r)}\lrar 0
\end{gather*}
implies that
\begin{gather*}
0\lrar F_\hbar^{n} \tensor
\big(F_\hbar^{r-1}\big)^{-1}\!\tensor K_C^{\tensor (-n+m+r)}\!\lrar F_\hbar^{n-1}\!\tensor
\big(F_\hbar^{r-1}\big)^{-1}\!\tensor K_C^{\tensor (-n+m+r)}\!\lrar K_C^{\tensor m}\lrar 0
\end{gather*}
for every $m\ge 1$.
Taking the cohomology long exact sequence, we obtain
\begin{gather*}
H^1\big(C,F_\hbar^{n-1}\tensor \big(F_\hbar^{r-1}\big)^{-1}\tensor
K_C^{\tensor (-n+1+r)}\big) \isom H^1(C,K_C)
\end{gather*}
for $m=1$, which proves~\eqref{hypo1} for $n=i$.
Similarly,
\begin{gather*}
H^1\big(C,F_\hbar^{n-1}\tensor \big(F_\hbar^{r-1}\big)^{-1}\tensor
K_C^{\tensor (-(n-1)+(m-1)+r)}\big)
\\ \qquad
{}=H^1\big(C,F_\hbar^{n-1}\tensor \big(F_\hbar^{r-1}\big)^{-1}\tensor
K_C^{\tensor (-n+m+r)}\big)
\isom H^1(C,K_C^m)=0
\end{gather*}
for $m\ge 2$, which is~\eqref{hypo2} for $n=i$.
By induction on $i$ in the decreasing direction,
we have established that the class $\hbar$
determines the unique extension
\eqref{ext i-1} for every $i$.
\end{proof}

\begin{Def}[${\rm SL}(r,\bC)$-opers]
A point $(E,\nabla)\in \cM_{\deR}$, i.e.,
an irreducible holomorphic ${\rm SL}(r,\bC)$-connection
$\nabla\colon E\lrar E\tensor K_C$
acting on a vector bundle $E$, is an~${\rm SL}(r,\bC)$-oper if the following
conditions are satisfied.

\vspace{1ex}
\textit{Filtration}. There is a filtration $F^\bullet$
by vector subbundles
\begin{gather}
\label{oper filt}
0=F^r\subset F^{r-1}\subset F^{r-2}\subset\cdots\subset F^0 = E.
\end{gather}

\textit{Griffiths transversality}. The~connection respects the filtration:
\begin{gather}
\label{Griffiths}
\nabla|_{F^i}\colon\quad F^i\lrar F^{i-1}\tensor K_C, \qquad i=1, \dots,r.
\end{gather}

\textit{Grading condition}. The~connection induces $\cO_C$-module isomorphisms
\begin{gather}
\label{grading}
\overline{\nabla}\colon\quad F^i/F^{i+1}\overset{\sim}{\lrar}
\big(F^{i-1}/F^{i}\big)\tensor K_C,\qquad i = 1, \dots, r-1.
\end{gather}
\end{Def}

For the purpose of defining differential operators
globally on the Riemann surface $C$, we need
a projective coordinate system
on $C$ subordinating its complex
structure. The~coordinate also allows us to give a concrete
$\hbar\in H^1(C,K_C)$-dependence in the
filtered extensions. For~example, the extension $E_\hbar$ of
\eqref{r=2 extension} is given by
a system of transition functions
\begin{gather*}
E_\hbar \longleftrightarrow
\left\{
\begin{bmatrix}
\xi_{\a\b} &\hbar \sig_{\a\b}\\
0&\xi_{\a\b}^{-1}
\end{bmatrix}
\right\}
\end{gather*}
on each $U_\a\cap U_\b$. The~cocycle condition
for the transition functions translates into a condition
\be
\label{sigma relation}
\sig_{\a\gam} = \xi_{\a\b}\sig_{\b\gam}
+ \sig_{\a\b}\xi_{\b\gam}^{-1}.
\ee
The application of the exterior
differentiation ${\rm d}$ to the cocycle condition
$\xi_{\a\gam} = \xi_{\a\b}\xi_{\b\gam}$ yields
\begin{gather*}
\frac{{\rm d}\xi_{\a\gam}}{{\rm d}x_\gam}{\rm d}x_\gam
= \frac{{\rm d}\xi_{\a\b}}{{\rm d}x_\b}{\rm d}x_\b \xi_{\b\gam}
+\xi_{\a\b}
\frac{{\rm d}\xi_{\b\gam}}{{\rm d}x_\gam}{\rm d}x_\gam.
\end{gather*}
Noticing that
\begin{gather*}
\xi_{\a\b}^2 = \frac{{\rm d}x_\b}{{\rm d}x_\a},
\end{gather*}
we see that
\be
\label{sigma ab}
\sig_{\a\b} := \frac{{\rm d}\xi_{\a\b}}{{\rm d}x_\b}
=\partial_\b \xi_{\a\b}
\ee
solves
\eqref{sigma relation}.
We note that
\begin{gather*}
\begin{bmatrix}
\xi_{\a\b}&\hbar \sig_{\a\b}
\\
&\xi_{\a\b}^{-1}
\end{bmatrix}
=
\exp\left(
\log \xi_{\a\b} {\begin{bmatrix}
1&0\\
0&-1
\end{bmatrix}}
\right)
\exp\left(
\hbar \partial_\b \log \xi_{\a\b}
\begin{bmatrix}
0&1
\\
0&0
\end{bmatrix}
\right).
\end{gather*}
Therefore, in the multiplicative sense, the
extension class is determined by
$\partial_\b \log \xi_{\a\b}$.

\begin{lem}
The extension class
 $ \sig_{\a\b}$ of~\eqref{sigma ab}
 defines a non-trivial extension
~\eqref{r=2 extension}.
\end{lem}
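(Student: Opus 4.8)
The plan is to show that the cocycle $\sig_{\a\b} = \partial_\b \xi_{\a\b}$ is not a coboundary, i.e., it does not arise from a global splitting of the sequence \eqref{r=2 extension}. Concretely, a splitting would mean there exist local sections $t_\a \in \cO_C(U_\a)$ realizing an isomorphism of $E_\hbar$ with the split bundle $K_C^\half \dsum K_C^{-\half}$; in terms of transition functions this amounts to solving
\be
\sig_{\a\b} = \xi_{\a\b} t_\b - t_\a \xi_{\a\b}^{-1}
\qquad\text{on } U_\a\cap U_\b
\ee
for a global collection $\{t_\a\}$ with $t_\a$ transforming so that $\{t_\a \xi_{\a\b}^{-1}\cdot(\text{something})\}$ glues — more precisely, $\{t_\a\}$ should be a $0$-cochain for the sheaf $K_C^{\tensor 2} = \Hom(K_C^{-\half}, K_C^\half)$, whose Čech coboundary equals $\{\sig_{\a\b}\}$. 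So the statement is exactly that the class $[\sig_{\a\b}] \in H^1(C, K_C^{\tensor 2})$ — wait, one must be careful about which line bundle hosts the class: since $\Ext^1(K_C^{-\half}, K_C^\half) = H^1(C, K_C)$, the relevant cohomology group is $H^1(C,K_C)\isom \bC$, and I must show the class is the nonzero element.

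**First I would** compute the class directly. Write $\sig_{\a\b} = \partial_\b \xi_{\a\b} = \partial_{x_\b}\log\xi_{\a\b}\cdot \xi_{\a\b}$; using \eqref{xi 2} this is $\half\,\partial_{x_\b}\log(dx_\b/dx_\a)\cdot\xi_{\a\b}$, which one recognizes (up to the factor and the $\xi_{\a\b}$ twist) as essentially $\half$ of the $1$-cocycle representing $dc_1(K_C)$ — the logarithmic derivative of the canonical bundle's transition cocycle. This is the standard cocycle whose class in $H^1(C,K_C)$ is, up to a nonzero constant, the image of $\deg K_C = 2g-2 \ne 0$ under $H^1(C,\cO_C) \to H^1(C, K_C)$... but that is not quite the right framing either, so let me take the cleaner route: **the key step** is to argue by contradiction. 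Suppose $\{\sig_{\a\b}\}$ is a coboundary, so the extension \eqref{r=2 extension} splits. Then $E_\hbar \isom K_C^\half \dsum K_C^{-\half}$ would admit $K_C^{-\half}$ as a subbundle of a bundle which, by the explicit triangular form \eqref{r=2 trans} with the projective-coordinate cocycle, should be shown to be non-split. The most robust argument: a splitting gives a sub-line-bundle $L\cong K_C^{-\half}$ mapping isomorphically to the quotient, i.e., a lift producing $\Hom(K_C^{-\half}, E_\hbar)$ surjecting onto $\Hom(K_C^{-\half}, K_C^{-\half})=\cO_C$; equivalently $H^0(C, E_\hbar\tensor K_C^\half) \to H^0(C,\cO_C)$ is surjective. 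From the twisted sequence $0\to K_C \to E_\hbar\tensor K_C^\half \to \cO_C \to 0$, surjectivity of $H^0(C, E_\hbar\tensor K_C^\half)\to H^0(C,\cO_C)=\bC$ fails precisely when the connecting map $H^0(C,\cO_C)\to H^1(C,K_C)$ sends $1$ to the extension class $\hbar\ne 0$ — which closes the loop only if we already know the class is $\hbar$. So the genuinely non-circular content is: \textbf{identify $[\sig_{\a\b}]$ with a nonzero multiple of the generator of $H^1(C,K_C)$.}

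**The main obstacle**, then, is this identification, and I would handle it via Serre duality / residues. The pairing $H^1(C,K_C)\times H^0(C,\cO_C)\to \bC$ is the residue sum, sending $(\{\sig_{\a\b}\}, 1)$ to $\sum_p \Res$ of a suitable patching of $\sig_{\a\b}$. Using a partition of unity $\{\rho_\a\}$ subordinate to $\{U_\a\}$, the Dolbeault representative of $[\sig_{\a\b}]$ is $\bar\partial(\sum_\b \rho_\b \sig_{\a\b})$ on $U_\a$, and one computes $\int_C$ of this against $1$. Because $\sig_{\a\b}=\partial_\b\xi_{\a\b}$ with $\xi_{\a\b}$ the $K_C^\half$-cocycle, this integral unwinds to a nonzero multiple of $\int_C c_1(K_C) = 2g-2 > 0$ (here $g\ge 2$ is used), hence the class is nonzero. \textbf{Alternatively and more cleanly}, one observes that $\sig_{\a\b}$ is, up to the universal constant $\half$ and the $\xi_{\a\b}$-twist converting the $K_C$-cocycle into the $K_C^\half\tensor K_C^\half/\text{diag}$ form, literally the Čech cocycle $\{d\log(\text{transition of }K_C)\}$ representing the Atiyah class of $K_C^\half$, whose non-vanishing is equivalent to $\deg K_C^\half = g-1\ne 0$ — again using $g\ge 2$. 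I would present this Atiyah-class computation as the core of the proof: since $\partial_\b^2\xi_{\a\b}=0$ (the projective-coordinate property emphasized after \eqref{xi 2}), the cochain $\{\partial_\b\xi_{\a\b}\}$ is a well-defined $K_C$-valued cocycle, and its class is $(g-1)$ times a generator of $H^1(C,K_C)$, hence nonzero; therefore the extension \eqref{r=2 extension} it defines is non-trivial, as claimed.
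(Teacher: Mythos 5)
Your final argument is essentially the paper's: both identify $[\sigma_{\alpha\beta}]$ with the image of the transition cocycle of $K_C^{1/2}$ under $d\log\colon H^1(C,\mathcal{O}_C^*)\to H^1(C,K_C)$ (the Atiyah class), whose non-vanishing follows from $\deg K_C^{1/2}=g-1\neq 0$; the paper packages this as "vanishing would make $K_C^{1/2}$ a flat line bundle, contradicting $c_1=g-1$," while you read off $(g-1)$ times the generator directly. The earlier detours (the splitting argument you rightly flag as circular, and the partition-of-unity residue computation) are dispensable, and note that the cocycle identity \eqref{sigma relation} for $\{\sigma_{\alpha\beta}\}$ follows from differentiating $\xi_{\alpha\gamma}=\xi_{\alpha\beta}\xi_{\beta\gamma}$ rather than from $\partial_\beta^2\xi_{\alpha\beta}=0$.
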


\begin{proof}
The cohomology long exact sequence
\begin{gather*}
\begin{CD}
H^1(C,\bC)@>>> H^1(C,\cO_C)
@>>> H^1(C,K_C) @>\sim>>H^2(C,\bC)\\
@VVV@VVV@|\\
H^1(C,\bC^*)@>>> H^1(C,\cO^*_C)
@>{{\rm d}\log}>> H^1(C,K_C)\\
@VV{0}V@VV{c_1}V@VVV\\
H^2(C,\bZ)@=H^2(C,\bZ)@>>>0
\end{CD}
\end{gather*}
associated with
\begin{gather*}
\begin{CD}
 @.0 @. 0\\
&&@VVV @VVV\\
0@>>>\bZ @=\bZ@>>>0\\
&&@VVV @VVV@VVV\\
0@>>>\bC @>>>\cO_C @>{\rm d}>>K_C @>>> 0\\
&&@VVV @VVV@VVV\\
0@>>>\bC^* @>>>\cO_C^* @>{{\rm d}\log}>>K_C
@>>> 0\\
&&@VVV @VVV@VVV\\
@.0@.0@.0
\end{CD}
\end{gather*}
tells us that $\{\sig_{\a\b}\}$ corresponds
to the image
of $\{\xi_{ab}\}$ via the map
\begin{gather*}
\begin{CD}
H^1(C,\cO^*_C)@>{{\rm d}\log}>>
H^1(C,K_C).
\end{CD}
\end{gather*}
From the exact sequence, we see that
if ${\rm d}\log \{\xi_{\a\b}\} = 0\in H^1(C,K_C)$,
then it comes from a class in $H^1(C,\bC^*)$,
which is the moduli space of
line bundles with holomorphic connections
(see, for example,~\cite{Bloch}).
Hence the first Chern class
of the theta characteristic
$c_1\big(K_C^\half\big)$ should be $0$.
But it is $g-1$, not $0$, since $g(C)\ge 2$.
\end{proof}

\begin{rem}The same exact sequences in the above proof were used in~\cite{Bloch} for
constructing Bloch regulators of the algebraic $K_2$-group. The~torsion property
of the Steinberg symbol of~generators of $K_2(\Sigma)$
and quantizability of a spectral curve $\Sigma$ was first
discussed in~\cite{GS} for
the case of difference equations.
\end{rem}

The class $\{\sigma_{\a\b}\}$
of~\eqref{sigma ab} gives a natural isomorphism
$H^1\left(C,K_C\right)\isom\bC$. We identify
the deformation parameter $\hbar\in \bC$
with the cohomology class $\{\hbar \sig_{\a\b}\}
\in H^1\left(C,K_C\right)=\bC$.
Let
\begin{gather*}
\bq = (q_{2},q_{3},\dots,q_{r})\in B
=\bigoplus_{\ell=2}^r
H^0\big(C,K_C^{\tensor \ell}\big)
\end{gather*}
be an arbitrary point of the Hitchin base $B$.
We trivialize the line bundle $K_C^{\tensor \ell}$
with respect to our projective coordinate chart
$C = \bigcup_\a U_\a$, and write each $q_{\ell}$
as $\{(q_\ell)_\a\}$ that satisfies the
transition relation
\begin{gather*}
(q_\ell)_\a = (q_\ell)_\b \xi_{\a\b}^{2\ell}.
\end{gather*}
The transition function
of the vector bundle $E_0$ is given by
\begin{gather*}
\xi_{\a\b}^H = \exp(H \log \xi_{\a\b}).
\end{gather*}
Since $X_-\colon E_0\lrar E_0\tensor K_C$ is
a global Higgs field, its local expressions
$\{X_- {\rm d}x_\a\}$
with respect to the projective coordinate system
satisfies the transition relation
\be
\label{X- local}
X_-{\rm d}x_\a = \exp(H \log \xi_{\a\b})
X_-{\rm d}x_\b \exp(-H \log \xi_{\a\b})
\ee
on every $U_\a\cap U_\b$. The~same relation holds for the Higgs field
$\phi(\bq)$ as well:
\be
\label{Higgs local}
\phi_\a(\bq){\rm d}x_\a = \exp(H \log \xi_{\a\b})
\phi_\b(\bq){\rm d}x_\b \exp(-H \log \xi_{\a\b}).
\ee

We have the following:

\begin{thm}[construction of ${\rm SL}(r,\bC)$-opers]\label{thm:construction of opers}
On each $U_\a\cap U_\b$ define a
transition function
\be
\label{f hbar}
f_{\a\b}^\hbar :=
\exp(H \log \xi_{\a\b})
\exp\big(\hbar \partial_\b \log\xi_{\a\b}
X_+\big),
\ee
where $\partial_\b = \frac{\rm d}{{\rm d}x_\b}$,
and $\hbar \partial_\b \log\xi_{\a\b}\in H^1(C,K_C)$.
Then
\begin{itemize}\itemsep=0pt
\item The collection
$\big\{f_{\a\b}^\hbar\big\}$ satisfies the cocycle condition
\begin{gather*}
f_{\a\b}^\hbar f_{\b\gam}^\hbar = f_{\a\gam}^\hbar,
\end{gather*}
hence it defines a holomorphic bundle on $C$. It~is exactly the filtered extension
$E_\hbar$ of~Pro\-po\-si\-tion~{\rm \ref{prop:filter}}.

\item The locally defined differential operator
\begin{gather*}
\nabla_\a ^\hbar(0) :={\rm d}+\frac{1}{\hbar}X_- {\rm d}x_a
\end{gather*}
for every $\hbar \ne 0$
forms a global holomorphic connection in
$E_\hbar$, i.e.,
\begin{gather}
\label{d+X gauge equation}
\frac{1}{\hbar}X_-{\rm d}x_\a
= \frac{1}{\hbar}f_{\a\b}^\hbar
X_-{\rm d}x_\b \big(f_{\a\b}^\hbar\big)^{-1}
-{\rm d}f_{\a\b}^\hbar \cdot \big(f_{\a\b}^\hbar\big)^{-1}.
\end{gather}

\item Every point $(E_0,\phi(\bq))
\in\kappa (B)\subset \cM_{\Dol}$ of the Hitchin
section~\eqref{Hitchin section} gives rise to a one-parameter
family of ${\rm SL}(r,\bC)$-opers $\big(E_\hbar,
\nabla^\hbar(\bq)\big)\in \cM_{\deR}$.
In other words, the locally defined differential operator
\begin{gather}
\label{nabla q}
\nabla_\a^\hbar(\bq):= {\rm d} +\frac{1}{\hbar}\phi_\a(\bq){\rm d}x_\a
\end{gather}
for every $\hbar \ne 0$
determines a global holomorphic connection
\begin{gather}
\label{nabla q gauge}
\nabla^\hbar_\a(\bq)
= f_{\a\b}^\hbar\nabla^\hbar_\b(\bq)
\big(f_{\a\b}^\hbar\big)^{-1}
\end{gather}
in~$E_\hbar$ satisfying the oper conditions.

\item
Deligne's $\hbar$-connection
\begin{gather}
\label{Deligne}
\big(E_\hbar,\hbar \nabla^\hbar (\bq)\big)
\end{gather}
interpolates the Higgs pair and the oper, i.e.,
at $\hbar = 0$, the family~\eqref{Deligne}
gives the Higgs pair
$(E,\phi(\bq))\in \cM_{\Dol}$,
and at $\hbar = 1$ it gives an
${\rm SL}(r,\bC)$-oper $\big(E_1,\nabla^1(\bq)\big)
\in \cM_{\deR}$.

\item After a suitable gauge transformation
depending on $\hbar$,
the $\hbar \rar \infty$ limit of
the oper $\nabla^\hbar (\bq)$ exists and is equal to~$\nabla^{\hbar = 1}(0)$.
\end{itemize}
\end{thm}

\begin{proof}
Recall the Baker--Campbell--Hausdorff
formula: Let $A,B$ be elements of a Lie algebra
such that $[A,B] = c B$ for a constant $c\in \bC$. Then
\begin{gather}
\label{BCH}
{\rm e}^A {\rm e}^B {\rm e}^{-A} = {\rm e}^{B \exp c}.
\end{gather}
From this formula,
 $\frac{{\rm d}x_\b}{{\rm d}x_\a} =\xi_{\a\b}^2$, and $[H,X_+] = 2X_+$, we calculate
\begin{gather*}
f_{\a\b}^\hbar \big(f_{\gam\b}^\hbar\big)^{-1} =
\exp(H \log \xi_{\a\b})\exp(\hbar \partial_\b \log\xi_{\a\b}X_+)
\exp(-\hbar \partial_\b \log\xi_{\gam\b}X_+)\exp(-H \log \xi_{\gam\b})
\\ \hphantom{f_{\a\b}^\hbar \big(f_{\gam\b}^\hbar\big)^{-1}}
{}= \exp(H \log \xi_{\a\b})\exp(\hbar \partial_\b \log\xi_{\a\gam}X_+)
\exp(-H \log \xi_{\gam\b})
\\ \hphantom{f_{\a\b}^\hbar \big(f_{\gam\b}^\hbar\big)^{-1}}
{}= \exp(H \log \xi_{\a\b})\exp(\hbar \xi_{\b\gam}^2
\partial_\gam \log\xi_{\a\gam}X_+)\exp(-H \log \xi_{\gam\b})
\\ \hphantom{f_{\a\b}^\hbar \big(f_{\gam\b}^\hbar\big)^{-1}}
{}= \exp(H \log \xi_{\a\gam}) \exp(H \log \xi_{\gam\b})\exp(\hbar \xi_{\b\gam}^2
\partial_\gam \log\xi_{\a\gam}X_+)\exp(-H \log \xi_{\gam\b})
\\ \hphantom{f_{\a\b}^\hbar \big(f_{\gam\b}^\hbar\big)^{-1}}
{}= \exp(H \log \xi_{\a\gam})\exp(\hbar \xi_{\gam\b}^2\xi_{\b\gam}^2
\partial_\gam \log\xi_{\a\gam}X_+)
=f_{\a\gam}^\hbar.
\end{gather*}
Hence the cocycle condition
$f_{\a\b}^\hbar = f_{\a\gam}^\hbar f_{\gam\b}^\hbar$
follows. We note that the factor $\exp c$ in~\eqref{BCH} produces exactly the
cocycle $\xi_{\a\b}^2$ corresponding to the canonical sheaf $K_C$.

To prove~\eqref{d+X gauge equation},
we use the power series expansion of the adjoint action
\begin{gather}
\label{adjoint formula}
{\rm e}^{\hbar A}B{\rm e}^{-\hbar A}
=\sum_{n=0}^\infty\frac{1}{n!} \hbar ^n ({\rm ad}_A)^n(B)
:=\sum_{n=0}^\infty
\frac{1}{n!}\; \hbar^n\;
\overset{n}
{\overbrace{[A,[A,[\cdots,[A}},B]\cdots]]].
\end{gather}
{\samepage
We then find
\begin{gather*}
f_{\a\b}^\hbar X_-\big(f_{\a\b}^\hbar \big)^{-1}
\\[.5ex] \qquad
{}=\exp(H \log \xi_{\a\b})\exp(\hbar \partial_\b \log\xi_{\a\b}X_+)X_-
\exp(-\hbar \partial_\b \log\xi_{\a\b}X_+)\exp(-H \log \xi_{\a\b})
\\[.5ex] \qquad
{}= \exp(H \log \xi_{\a\b})X_-\exp(-H \log \xi_{\a\b})
+ \hbar \partial_\b \log\xi_{\a\b}H
\\[.5ex] \qquad\phantom{=}
{}- \hbar^2 (\partial_\b \log\xi_{\a\b})^2\exp(H \log \xi_{\a\b})X_+
\exp(-H \log \xi_{\a\b}),
\end{gather*}

}\noindent
and
\begin{gather*}
\partial_\b f_{\a\b}^\hbar \big(f_{\a\b}^\hbar \big)^{-1}
=\partial_\b \log\xi_{\a\b}H-\hbar (\partial_\b \log\xi_{\a\b})^2
\exp(H \log \xi_{\a\b})X_+\exp(-H \log \xi_{\a\b}).
\end{gather*}
Here, we have used the formula
\begin{gather*}
\partial_\b \partial_\b\log\xi_{\a\b}
=\partial_\b \big(\xi_{\a\b}^{-1}\partial_\b\xi_{\a\b}\big)
=-\xi_{\a\b}^{-2}(\partial_\b\xi_{\a\b})^2
=- (\partial_\b \log\xi_{\a\b})^2,
\end{gather*}
which follows from~\eqref{xiab}.
Therefore, noticing~\eqref{X- local}, we obtain
\begin{align*}
\bigg(\frac{1}{\hbar}f_{\a\b}^\hbar X_- \big(f_{\a\b}^\hbar \big)^{-1}
-\partial_\b f_{\a\b}^\hbar \big(f_{\a\b}^\hbar \big)^{-1}\bigg) {\rm d}x_\b&=
\frac{1}{\hbar} \exp(H \log \xi_{\a\b})X_-{\rm d}x_\b\exp(-H \log \xi_{\a\b})
\\
&=\frac{1}{\hbar} X_-{\rm d}x_\a.
\end{align*}

To prove~\eqref{nabla q gauge}, we need,
in addition to~\eqref{d+X gauge equation}, the following relation:
\begin{gather}
\label{q gauge computation}
\sum_{\ell = 2}^r (q_\ell)_\a X_+^{\ell-1}
 {\rm d}x_\a = f_{\a\b}^\hbar
\sum_{\ell = 2}^r (q_\ell)_\b X_+^{\ell-1} {\rm d}x_\b\big(f_{\a\b}^\hbar\big)^{-1}.
\end{gather}
But~\eqref{q gauge computation} is obvious from~\eqref{Higgs local}
and~\eqref{f hbar}.

Noticing that $f_{\a\b}^\hbar$ is an
upper-triangular matrix, we denote by
$\big(f_{\a\b}^\hbar\big)_i$
the principal truncation of $f_{\a\b}^\hbar$
to the first $(r-i+1)\times (r-i+1)$ upper-left
corner. For~example,
\begin{gather*}
\big(f_{\a\b}^\hbar\big)_r=\big[\xi_{\a\b}^{r-1}\big],
\\
\big(f_{\a\b}^\hbar\big)_{r-1}=
\begin{bmatrix}
\xi_{\a\b}^{r-1}
 \\
&\xi_{\a\b}^{r-3}
\end{bmatrix}
\begin{bmatrix}
1&\hbar \sqrt{s_1}\xi_{\a\b}^{-1}\sig_{\a\b}
 \\[.5ex]
&1
\end{bmatrix}
=
\begin{bmatrix}
\xi_{\a\b}^{r-1}&\hbar\sqrt{s_1}\xi_{\a\b}^{r-2}
\sig_{\a\b}
 \\[.5ex]
&\xi_{\a\b}^{r-3}
\end{bmatrix},
\\[1ex]
\big(f_{\a\b}^\hbar\big)_{r-2}=
\begin{bmatrix}
\xi_{\a\b}^{r-1}&\hbar \sqrt{s_1}\xi_{\a\b}^{r-2}\sig_{\a\b}
&\half\hbar^2\sqrt{s_1s_2}\xi_{\a\b}^{r-3}\sig_{\a\b}^2
 \\[.5ex]
&\xi_{\a\b}^{r-3}& \hbar \sqrt{s_2}\xi_{\a\b}^{r-4}\sig_{\a\b}
\\[.5ex]
&&\xi_{\a\b}^{r-5}
\end{bmatrix},\qquad \text{etc.}
\end{gather*}
They all satisfy the cocycle condition
with respect to the projective structure, and
define a~sequence of
vector bundles $F_\hbar^i$ on $C$.
From the shape of matrices
we see that these vector bundles
give the filtered extension
$(F_\hbar^\bullet, E_\hbar)$
of Proposition~\ref{prop:filter},
and hence satisfies the requirement~\eqref{oper filt}.
Since the connection $\nabla^\hbar(\bq)$ has
non-zero lower-diagonal entries in its
matrix representation coming from $X_-$,
$F_\hbar^i$ is not invariant under $\nabla^\hbar(\bq)$.
But since $X_-$ has non-zero entries
exactly along the first lower-diagonal,~\eqref{Griffiths} holds.
The~isomorphism~\eqref{grading} is~a consequence of~\eqref{si},
since $s_i = i(r-i)\ne 0$ for $i=1, \dots, r-1$.

Finally, the gauge transformation of
$\nabla^\hbar(\bq)$ by a bundle automorphism
\begin{gather}
\label{hbar gauge transf 1}
\hbar^{-\frac{H}{2}}=
\begin{bmatrix}
\hbar^{-\frac{r-1}{2}}&
\\
&\ddots&
\\
&&\hbar^{\frac{r-1}{2}}
\end{bmatrix}
\end{gather}
on each coordinate neighborhood $U_\a$ gives
\begin{gather}
\label{hbar gauge transf 2}
{\rm d}+\frac{1}{\hbar}\phi(\bq)\longmapsto\hbar^{-\frac{H}{2}}
\bigg({\rm d}+\frac{1}{\hbar}\phi(\bq)\bigg)\hbar^{\frac{H}{2}}
={\rm d}+X_-+ \sum_{\ell=2}^r \frac{q_\ell}{\hbar^\ell}X_+^{\ell-1}.
\end{gather}
This is because
\begin{gather*}
\hbar^{-\frac{H}{2}}X_-\hbar^{\frac{H}{2}}
= \hbar X_- \qquad \text{and}\qquad
\hbar^{-\frac{H}{2}}X_+^\ell \hbar^{\frac{H}{2}}
= \hbar^{-\ell} X_+^\ell,
\end{gather*}
which follows from the adjoint formula
\eqref{adjoint formula}.
Therefore,
\begin{gather*}
\lim_{\hbar\rar \infty}\nabla^\hbar(\bq) \sim
{\rm d}+X_- = \nabla^{\hbar=1}(0),
\end{gather*}
where the symbol $\sim$ means gauge equivalence.

This completes the proof of the theorem.
\end{proof}

\begin{rem}
In the construction theorem, our
use of a projective coordinate system is
essential, through
\eqref{xiab}. Only in such a coordinate,
the definition of the global connection
\eqref{nabla q gauge} makes sense. This is due
to the vanishing of the second derivative
of $\xi_{\a\b}$.
\end{rem}

The above construction theorem
yields the following.

\begin{thm}[biholomorphic quantization of Hitchin spectral curves]\label{thm:quantization-holomorphic}
Let $C$ be a compact Riemann surface
of genus $g\ge 2$
with a chosen projective coordinate system
subordinating its complex structure.
We denote by $\cM_{\Dol}$ the moduli
space of stable holomorphic ${\rm SL}(r,\bC)$-Higgs
bundles over $C$, and by $\cM_{\deR}$ the
moduli space of irreducible
holomorphic ${\rm SL}(r,\bC)$-connections
on $C$. For~a fixed theta characteristic
$K_C^\half$, we have a Hitchin section
$\kappa(B)\subset \cM_{\Dol}$ of~\eqref{Hitchin section}.
We denote by $Op\subset \cM_{\deR}$ the moduli
space of ${\rm SL}(r,\bC)$-opers with the
condition that the second term of the filtration
is given by
$F^{r-1} = K_C^{\frac{r-1}{2}}$.
Then the map
\be\label{biholomorphic quantization}
\cM_{\Dol}\supset
\kappa(B)\owns (E_0,\phi(\bq) )
\overset{\gam}{\longmapsto}
\big(E_\hbar,\nabla^\hbar(\bq)\big)
\in Op\subset \cM_{\deR}
\ee
evaluated at $\hbar = 1$
is a biholomorphic map with respect to the
natural complex structures induced from the
ambient spaces.

The biholomorphic quantization
\eqref{biholomorphic quantization} is also
$\bC^*$-equivariant. The~$\lam\in \bC^*$ action on the Hitchin
section is defined by
$\phi\longmapsto \lam\phi$. The~oper corresponding to~$(E_0,\lam\phi(\bq))\in \kappa(B)$ is
${\rm d}+\frac{\lam}{\hbar}\phi(\bq)$.
\end{thm}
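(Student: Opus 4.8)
The plan is to deduce the biholomorphy from the Drinfeld--Sokolov / Beilinson--Drinfeld \cite{BD} description of the moduli of opers, using Theorem~\ref{thm:construction of opers} to match that abstract description with the explicit family $\{\nabla^1(\bq)\}$. First I would record that $\gamma|_{\hbar=1}$ is well defined with image in $Op$: Theorem~\ref{thm:construction of opers} already produces, for each $\bq\in B$, an irreducible $SL(r,\bC)$-connection $\left(E_1,\nabla^1(\bq)\right)\in\cM_{\deR}$ satisfying the oper axioms \eqref{oper filt}--\eqref{grading}, and by construction the filtered extension has $F^{r-1}=\big(K_C^\half\big)^{\tensor(r-1)}=K_C^{\frac{r-1}{2}}$, so the point lies in $Op$. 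Holomorphy is then immediate: in the chosen projective chart $\nabla^1_\a(\bq)=d+\phi_\a(\bq)\,dx_\a$ with $\phi(\bq)=X_-+\sum_{\ell=2}^r q_\ell X_+^{\ell-1}$ depending $\bC$-linearly on $\bq$, and the underlying bundle $E_1$ is independent of $\bq$; hence $\bq\mapsto\left(E_1,\nabla^1(\bq)\right)$ is holomorphic into the affine space of $SL(r,\bC)$-connections on $E_1$, and composing with the biholomorphism $\kappa^{-1}$ of Lemma~\ref{lem:Hitchin section} gives holomorphy of $\gamma|_{\hbar=1}$ on $\kappa(B)$.

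The heart of the argument is bijectivity onto $Op$. Given $(E,\nabla,F^\bullet)\in Op$, the oper axioms together with $F^{r-1}=K_C^{\frac{r-1}{2}}$ determine the filtered bundle: the grading isomorphisms \eqref{grading} force each successive extension class to lie in $H^1(C,K_C)\cong\bC$, exactly as in the proof of Proposition~\ref{prop:filter}, so $(E,F^\bullet)\cong(E_\hbar,F_\hbar^\bullet)$ for some $\hbar$; since $\nabla$ is a genuine connection and the split bundle $E_0$ carries no such oper (as the gauge identity \eqref{d+X gauge equation} makes visible), $\hbar\ne 0$ and $E\cong E_1$. Writing $\nabla=d+A$ in the projective chart, Griffiths transversality \eqref{Griffiths} restricts $A$ to ``upper-triangular plus first subdiagonal'', the grading condition makes the subdiagonal entries nowhere vanishing, so a diagonal bundle automorphism normalizes them to $X_-$; Kostant's decomposition $sl(r,\bC)=[X_-,sl(r,\bC)]\oplus\bigoplus_{\ell=2}^r\bC\,X_+^{\ell-1}$ \cite{Kostant} then lets one remove, by a unipotent upper-triangular gauge transformation that is globally defined thanks to the projective coordinate system (this is where the argument of Theorem~\ref{thm:construction of opers} is run in reverse), all of $A$ except a unique term $\sum_{\ell=2}^r q_\ell X_+^{\ell-1}$ with $\bq\in B$. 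This identifies $(E,\nabla,F^\bullet)$ with $\left(E_1,\nabla^1(\bq)\right)$, giving surjectivity, and uniqueness of this canonical form gives injectivity of $\gamma|_{\hbar=1}\circ\kappa$. Since $\bq=(q_2,\dots,q_r)$ is a holomorphic coordinate system on $Op$ --- the Drinfeld--Sokolov reduction is algebraic in families, hence compatible with the complex structure $Op$ inherits from $\cM_{\deR}$ --- and $\gamma|_{\hbar=1}\circ\kappa$ is the identity in these coordinates, it is biholomorphic; composing with $\kappa$ finishes the first claim.

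Finally, $\bC^*$-equivariance is formal: $E_\hbar$ is independent of the Higgs field and $\nabla^\hbar$ is $\bC$-linear in $\phi$, so feeding $\left(E_0,\lam\phi(\bq)\right)$ into the construction of Theorem~\ref{thm:construction of opers} yields the connection $d+\tfrac{\lam}{\hbar}\phi_\a(\bq)\,dx_\a$ on $E_\hbar$, which for $\lam\ne 0$ still satisfies the grading condition \eqref{grading} --- its subdiagonal part is $\lam X_-$ --- and is therefore again an $SL(r,\bC)$-oper, as asserted; at $\hbar=1$ this is the $\bC^*$-action on $Op$ matching $\phi\mapsto\lam\phi$ on $\kappa(B)$.

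The step I expect to be the real obstacle is the surjectivity half: showing that every oper in $Op$ is gauge-equivalent to some $\nabla^1(\bq)$, equivalently that the explicit family realizes the full Drinfeld--Sokolov canonical form and that the resulting coordinates agree with the complex structure of $\cM_{\deR}$. Everything else --- well-definedness, holomorphy, injectivity, and equivariance --- is a direct consequence of the construction already carried out in Theorem~\ref{thm:construction of opers}.
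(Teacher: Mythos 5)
Your proposal is correct in outline, but it distributes its effort quite differently from the paper, and in one respect it is more complete. The paper's own proof of Theorem~\ref{thm:quantization-holomorphic} is very short: holomorphy and bijectivity of $\gam$ are treated as already delivered by the explicit construction of Theorem~\ref{thm:construction of opers}, and the entire written argument concerns the $\bC^*$-equivariance, whose key point (which you gloss over) is that $\kappa$ is \emph{not} a section of the Hitchin fibration, so $\phi\mapsto\lam\phi$ corresponds to the weighted action $q_\ell\mapsto\lam^\ell q_\ell$ on $B$, made visible by the gauge transformation $(\lam/\hbar)^{H/2}$ as in \eqref{hbar gauge transf 1}--\eqref{hbar gauge transf 2}. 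You, by contrast, spend most of your effort on surjectivity onto $Op$, running the Drinfeld--Sokolov canonical-form argument: Kostant's transversality $sl(r,\bC)=[X_-,sl(r,\bC)]\dsum\Ker(\mathrm{ad}_{X_+})$ with $\Ker(\mathrm{ad}_{X_+})=\bigoplus_{\ell=2}^r\bC X_+^{\ell-1}$, globalized by the projective structure. This is a genuine addition: the paper nowhere verifies that every oper with $F^{r-1}=K_C^{\frac{r-1}{2}}$ is gauge-equivalent to some $\nabla^1(\bq)$, relying implicitly on the Beilinson--Drinfeld description of $Op$ as a torsor over $B$, so your route actually supplies a step the published proof leaves tacit. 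Two small repairs: the reason the underlying bundle of an oper cannot be the split bundle $E_0$ is not \eqref{d+X gauge equation} but Weil's criterion --- for $g\ge 2$ the summands $\left(K_C^\half\right)^{\tensor(r-2i+1)}$ have nonzero degree, so $E_0$ admits no holomorphic connection at all, forcing $\hbar\ne 0$ and hence $E\isom E_1$; and after fixing the filtered isomorphism with $E_1$ you should note that the residual ambiguity is exactly a unipotent filtered automorphism, which is precisely what the Drinfeld--Sokolov normalization absorbs, giving both surjectivity and injectivity in one stroke. With those points made precise your argument is sound.
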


\begin{proof}
The $\bC^*$-equivariance follows from the
same argument of the gauge transformation~\eqref{hbar gauge transf 1}, \eqref{hbar gauge transf 2}.
Since the Hitchin section $\kappa$ is not
the section of the Hitchin fibration $\mu_H$, we~need the gauge transformation. The~action $\phi\longmapsto \lam\phi$
on the Hitchin section induces a weighted action
\begin{gather*}
B\owns (q_2,q_3,\dots,q_r)
\longmapsto \left(\lam^2 q_2,\lam^3 q_3,
\dots, \lam^r q_r\right)\in B
\end{gather*}
through $\mu_H$.
Then we have the gauge equivalence via the gauge
transformation
$\big(\frac{\lam}{\hbar}\big)^{\frac{H}{2}}$:
\begin{gather*}
{\rm d} + \frac{\lam}{\hbar}\phi(\bq)
\sim\bigg(\frac{\lam}{\hbar}\bigg)^{\frac{H}{2}}
\bigg({\rm d} + \frac{\lam}{\hbar}\phi(\bq)\bigg)
\bigg(\frac{\lam}{\hbar}\bigg)^{-\frac{H}{2}}
= {\rm d} + X_- +\sum_{\ell=2}^r \frac{\lam^\ell q_\ell}
{\hbar^\ell} X_+^{\ell-1}.\tag*{\qed}
\end{gather*}
\renewcommand{\qed}{}
\end{proof}


\subsection[Semi-classical limit of SL(r,C)-opers]
{Semi-classical limit of $\boldsymbol{{\rm SL}(r,\bC)}$-opers}

A holomorphic connection on a
compact Riemann surface $C$ is automatically flat.
Therefore, it~defines a $\cD$-module over
$C$. Continuing the last subsection's conventions,
let us fix a projective coordinate system on $C$,
and let $(E_0,\phi(\bq)) = \kappa(\bq)$
be a point on the Hitchin section of~\eqref{Hitchin section}.
It~uniquely defines an $\hbar$-family of opers
$\big(E_\hbar,\nabla^\hbar(\bq)\big)$.

In this subsection, we establish that
the $\hbar$-connection $\hbar \nabla^\hbar(\bq)$
defines a family of Rees $\cD$-modules on
$C$ parametrized by $B$ such that
the semi-classical limit of the family agrees with the
family of spectral curves~\eqref{family of spectral} over $B$.

To calculate the semi-classical limit,
let us trivialize the vector bundle $E_\hbar$
on each simply connected coordinate neighborhood
$U_\a$ with
coordinate $x_\a$ of the chosen
projective coordinate system. A~flat section $\Psi_\a$ of $E_\hbar$ over
$U_\a$ is a solution of
\begin{gather}
\label{trivialization}
\hbar \nabla^\hbar_\a(\bq)\Psi_\a:=(\hbar {\rm d} + \phi_\a (\bq))
\begin{bmatrix}
\psi_{r-1}\\\psi_{r-2}\\\vdots\\\psi_1\\\psi
\end{bmatrix}_\a
=0,
\end{gather}
with an appropriate unknown function $\psi$.
Since $\Psi_\a = f_{\a\b}^\hbar \Psi_\b$,
the function $\psi$ on $U_\a$
satisfies the transition relation
$(\psi)_\a = \xi_{\a\b}^{-r+1}(\psi)_\b$. It~means
that $\psi$ is actually
a local section of the line bundle $K_C^{-\frac{r-1}{2}}$.
There are $r$ linearly independent solutions
of~\eqref{trivialization}, because
$q_2,\dots,q_r$ are represented by
holomorphic functions on $U_\a$. The~entries of $X_+^\ell$ are given by the formula
\begin{gather*}
X_+^\ell =\big[s_{ij}^{(\ell)}\big], \qquad
s_{ij}^{(\ell)}= \delta_{i+\ell,j}\sqrt{s_i s_{i+1}\cdots s_{i+\ell-1}}.
\end{gather*}
Therefore, \eqref{trivialization} is equivalent to
\begin{gather}
0=\sqrt{s_{r-k-1}}\psi_{k+1} +\hbar \psi_k'
+ \sqrt{s_{r-k}}q_2\psi_{k-1}+\sqrt{s_{r-k}s_{r-k+1}}q_3\psi_{k-2}+\cdots\nonumber
\\[.5ex] \hphantom{0=}
{}+\sqrt{s_{r-k}s_{r-k+1}\cdots s_{r-1}}q_{k+1}\psi\nonumber
\\[.5ex] \hphantom{0}
{}=\sqrt{s_{k+1}}\psi_{k+1} +\hbar \psi_k'
+ \sqrt{s_{k}}q_2\psi_{k-1}+\sqrt{s_{k}s_{k-1}}q_3\psi_{k-2}+\cdots
+\sqrt{s_{k}s_{k-1}\cdots s_{1}}q_{k+1}\psi
\label{psi k}
\end{gather}
for $k = 0, 1, \dots, r-1$,
where we use $s_k = s_{r-k}$.
Note that the differentiation
is always multiplied by $\hbar$
as $\hbar {\rm d}$ in~\eqref{trivialization}, and that
 $\phi(\bq)$ is independent of $\hbar$ and takes the form
\begin{gather*}
\phi(\bq)\setlength{\arraycolsep}{4pt}
=\begin{bmatrix}
0&\sqrt{s_{r-1}}q_2&\sqrt{s_{r-2}s_{r-1}}q_3&\cdots&\cdots
&\sqrt{s_2\cdots s_{r-1}}q_{r-1}
&\sqrt{s_1s_2\cdots s_{r-1}}q_r
\\[.5ex]
\sqrt{s_{r-1}}&0&\sqrt{s_{r-2}}q_2&\cdots&\cdots&
\sqrt{s_2\cdots s_{r-2}}q_{r-2}
&
\sqrt{s_1\cdots s_{r-2}}q_{r-1}
\\[.5ex]
&\sqrt{s_{r-2}}&0&\ddots&\cdots&\sqrt{s_2\cdots s_{r-3}}q_{r-3}
&\sqrt{s_1\cdots s_{r-3}}q_{r-2}
\\[.5ex]
&&\ddots&\ddots&\ddots&\vdots&\vdots
\\[.5ex]
&&&\sqrt{s_3}&0&\sqrt{s_2}q_2&\sqrt{s_1s_2}q_3
\\[.5ex]
&&&&\sqrt{s_2}&0&\sqrt{s_{1}}q_2
\\[.5ex]
&&&&&\sqrt{s_{1}}&0
\end{bmatrix}\!.
\end{gather*}
By solving~\eqref{psi k} for $k=0, 1, \dots, r-2$
recursively, we obtain an expression of $\psi_k$ as
a linear combination of
\begin{gather*}
\psi=\psi_0,\qquad \hbar \psi'=\hbar \frac{\rm d}{{\rm d}x_\a}\psi,\qquad \dots,\qquad\hbar^k\psi^{(k)}=\hbar^k\frac{{\rm d}^k}{{\rm d}x_\a^k}\psi,
\end{gather*}
with coefficients in differential polynomials of
$q_2, q_3,\dots,q_k$. Moreover,
 the coefficients of these
differential polynomials are in $\overline{\bQ}[\hbar]$. For~example,
\begin{gather*}
\psi_1 =-\frac{1}{\sqrt{s_1}}\hbar \psi',
\\
\psi_2 =\frac{1}{\sqrt{s_1s_2}}(\hbar^2 \psi'' -s_1q_2\psi),
\\
\psi_3 =\frac{1}{\sqrt{s_1s_2s_3}}
(-\hbar^3 \psi'''+\hbar(s_1+s_2)q_2 \psi'
+(\hbar s_1 q_2'-s_1s_2q_3)\psi),
\\
\psi_4 =\frac{1}{\sqrt{s_1s_2s_3s_4}}
\big(\hbar^4\psi''''-\hbar^2 (s_1+s_2+s_3)q_2\psi''
+(-\hbar^2(2s_1+s_2)q_2'+\hbar(s_1s_2+s_2s_3)q_3)\psi'
\\ \hphantom{\psi_4 =}
+(\hbar^2s_1q_2''-\hbar s_1s_2 q_3'+s_1s_3q_2^2-s_1s_2s_3q_4)\psi\big), \quad \text{etc.}
\end{gather*}
Since $\psi_1$ is proportional to~$\psi'$,
inductively we can show that the linear
combination expression of~$\psi_k$ by derivatives of $\psi$
does not contain the $(k-1)$-th order differentiation
of $\psi$. Equation~\eqref{psi k} for $k = r-1$ is
a differential equation
\begin{gather}
\label{order r}
\hbar \psi_{r-1}' +\sqrt{s_1}q_2\psi_{r-2}
+\sqrt{s_1s_2}q_3\psi_{r-3}+\cdots
+\sqrt{s_1s_2\cdots s_{r-1}}q_r\psi =0,
\end{gather}
which is an order $r$ differential equation
for $\psi\in K_C^{-\frac{r-1}{2}}$.
Its actual shape can be calculated
using the procedure of the proof of
the next theorem. The~equation becomes~\eqref{order r qc}.
Since we~are using a fixed projective coordinate
system, the connection $\nabla^\hbar(\bq)$
takes the same form on~each coordinate
neighborhood $U_\a$. Therefore,
the shape of the differential equation
of~\eqref{order r} as an~equation for
$\psi\in K_C^{-\frac{r-1}{2}}$ is again the
same on every coordinate neighborhood.

We are now ready to calculate the semi-classical
limit of the Rees $\cD$-module
corresponding to~$\hbar\nabla^\hbar(\bq)$. The~following is our main theorem of the paper.

\begin{thm}[semi-classical limit of an oper]\label{thm:SCL}
Under the same setting of Theorem~$\ref{thm:quantization-holomorphic}$,
let~$\cE(\bq)$ denote the Rees $\cD$-module
$\big(E_\hbar, \hbar\nabla^\hbar(\bq)\big)$
associated with the oper of~\eqref{biholomorphic quantization}. Then the semi-classical limit
of $\cE(\bq)$ is the spectral curve $\sigma^*\Sigma\subset T^*C$
of $-\phi(\bq)$ defined by the cha\-rac\-te\-ris\-tic equation
$\det(\eta+\phi(\bq))=0$, where $\sigma$ is the involution of~\eqref{involution}.
\end{thm}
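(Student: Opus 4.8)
The plan is to exhibit an explicit local generator of the Rees $\cD$-module $\cE(\bq)$ on each chart of the fixed projective coordinate system and then read off its semi-classical limit. On a simply connected coordinate neighborhood $U_\a$ with coordinate $x_\a$, a flat section of $\left(E_\hbar,\hbar\nabla^\hbar_\a(\bq)\right)$ solves $\eqref{trivialization}$, which is the scalar system $\eqref{psi k}$ for $k=0,1,\dots,r-1$. Solving $\eqref{psi k}$ recursively for $k=0,\dots,r-2$ expresses each $\psi_k$ as a differential operator of order $k$ in $\hbar\,d/dx_\a$ applied to the cyclic vector $\psi=\psi_0$, with leading term $(-1)^k(s_1\cdots s_k)^{-1/2}(\hbar\,d/dx_\a)^k\psi$ and lower-order-in-$\hbar d$ coefficients that are differential polynomials in $q_2,\dots,q_k$ with coefficients in $\overline{\bQ}[\hbar]$ (the cases $k\le 4$ are displayed before the statement). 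Substituting these expressions into the remaining equation $\eqref{order r}$, i.e.\ the case $k=r-1$, produces the order $r$ operator $P=P_\a$ acting on $\psi\in K_C^{-\frac{r-1}{2}}$; this is the equation $\eqref{order r qc}$. Because the projective coordinate system is fixed once and for all, $\nabla^\hbar(\bq)$ and hence $P_\a$ take the same form in every chart, so it suffices to analyze one chart and then check that the local data glue.

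Next I would compute the semi-classical limit of $P_\a$ in the sense of Definition~\ref{def:SCL P}, writing $P_\a=\sum_{\ell=0}^r a_\ell(x_\a,\hbar)(\hbar\,d/dx_\a)^{r-\ell}$ and forming $p_{U_\a}(x_\a,y)=\sum_\ell a_\ell(x_\a,0)\,y^{r-\ell}$. The key point is that reduction modulo $\hbar$ turns the elimination above into ordinary linear algebra over the commutative ring $\cO_{U_\a}[y]$: in $\widetilde{\cD_{U_\a}}$ one has $(\hbar\,d/dx_\a)\cdot Q\equiv y\cdot\mathrm{symbol}(Q)\pmod{\hbar}$ for any operator $Q$, since the extra term $\hbar Q'$ is divisible by $\hbar$. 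Hence the mod-$\hbar$ images of the recursive relations $\eqref{psi k}$ are exactly the rows of the $\cO_{U_\a}[y]$-linear system $(yI+\phi_\a(\bq))\,{}^{t}(\psi_{r-1},\dots,\psi_1,\psi)=0$, and eliminating $\psi_{r-1},\dots,\psi_1$ by back-substitution is Cramer's rule: the consistency condition for $\psi$ is $\det(yI+\phi_\a(\bq))=0$. Comparing leading coefficients in $y$ shows $p_{U_\a}(x_\a,y)$ equals $\det(yI+\phi_\a(\bq))$ up to the nonzero scalar $(-1)^{r-1}(s_1\cdots s_{r-1})^{-1/2}$. (Intrinsically: the presentations $\widetilde{\cD_{U_\a}}/\widetilde{\cD_{U_\a}}P_\a$ and $\left(E_\hbar|_{U_\a},\hbar\nabla^\hbar_\a(\bq)\right)$ are the same $\cO_{U_\a}\tensor\fh$-coherent $\widetilde{\cD_{U_\a}}$-module, so they have the same characteristic variety inside $T^*C$; the scalar presentation gives $V(\mathrm{symbol}(P_\a))$ and the matrix presentation gives $V(\det(yI+\phi_\a(\bq)))$, forcing the two to cut out the same divisor.)

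It then remains to glue and to identify the limit with $\sigma^*\Sigma$. At $\hbar=0$ the coefficients are $a_\ell(x_\a,0)=c_\ell(\bq)_\a$, the coefficients of $\det(yI+\phi_\a(\bq))=y^r+\sum_{\ell=2}^r c_\ell(\bq)\,y^{r-\ell}$, and each $c_\ell(\bq)$ is a universal polynomial in $q_2,\dots,q_\ell$ which, by the transition relation $\eqref{q ell relation}$, $\mathrm{Ad}$-invariance of $\det$, and the weights of the principal TDS, transforms as a section of $K_C^{\tensor\ell}$; thus $\eqref{al(x) condition}$ holds and the local divisors $\Sigma_{U_\a}=(p_{U_\a})_0$ glue to a spectral curve contained in $T^*C\subset\overline{T^*C}$. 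This global curve is cut out by $\det(\eta+\pi^*\phi(\bq))=0$. Since the Hitchin spectral curve $\Sigma=\Sigma(\phi(\bq))$ is cut out by $\det(\eta-\pi^*\phi(\bq))=0$ and the involution $\sigma$ of $\eqref{involution}$ acts on the fibres of $T^*C$ by $\eta\mapsto-\eta$, this global curve is precisely $\sigma^*\Sigma$, and the theorem follows.

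The main obstacle I expect is twofold. First, the purely computational task of carrying the recursive elimination far enough to write $\eqref{order r qc}$ in closed form is genuinely involved, because of the non-commutativity of $x_\a$ and $\hbar\,d/dx_\a$. Second, and conceptually, one must make the identification $\mathrm{symbol}(P_\a)=(-1)^{r-1}(s_1\cdots s_{r-1})^{-1/2}\det(yI+\phi_\a(\bq))$ airtight — in particular be certain that no $\hbar$-correction term surviving in a coefficient $a_\ell(x_\a,\hbar)$ contributes at $\hbar=0$; this is exactly what the reduction-mod-$\hbar$ (equivalently, the presentation-independence of the characteristic cycle) argument guarantees.
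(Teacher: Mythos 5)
Your proposal is correct, and it follows the paper's overall architecture (work chart by chart in the projective coordinate system, eliminate $\psi_{r-1},\dots,\psi_1$ from the first-order system \eqref{psi k} to produce the scalar generator \eqref{order r qc}, then identify its semi-classical limit with $\det(y+\phi_\a(\bq))$ and glue). Where you genuinely diverge is in the crux of the identification. The paper proceeds by an explicit matrix gauge transformation $\Delta+A(\bq,\hbar)$ bringing $\hbar\nabla^\hbar(\bq)$ to companion form $\hbar d-X-\omega(\bq,\hbar)$, proves via a Wronskian argument the lemma that $\omega$ is supported in the first row, and then equates two determinants by conjugating with the scalar matrix $e^{-S_0/\hbar}I$ and letting $\hbar\to 0$ in two different orders. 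You instead observe that reduction modulo $\hbar$, i.e.\ the semi-classical limit map $\sum a_\ell(x,\hbar)(\hbar\,d/dx)^{r-\ell}\mapsto\sum a_\ell(x,0)y^{r-\ell}$, is a ring homomorphism onto $\cO_{U_\a}[y]$ (because $[\hbar\,d/dx,a]=\hbar a'$ lies in the ideal $(\hbar)$), so the non-commutative elimination may be performed after reduction, where it becomes Cramer's rule for $(yI+\phi_\a(\bq))v=0$; the cofactor you divide by is the upper-triangular minor $\sqrt{s_1\cdots s_{r-1}}$, which matches the leading coefficient of $Q_{r-1}$ and makes the monic symbol exactly $\det(yI+\phi_\a(\bq))$. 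This buys you a shorter and more conceptual proof that entirely avoids the companion-form lemma and the Wronskian; what it does not give you, and what the paper's route does, is the explicit canonical form \eqref{connection canonical form} and the closed expression \eqref{omega} for the coefficients $\omega_i(\bq,\hbar)$ of the quantum curve, which the paper uses elsewhere (e.g.\ for Example~\ref{example}). Your gluing step via the transition law \eqref{q ell relation} and $\mathrm{Ad}$-invariance of the determinant is also slightly more careful than the paper's appeal to the uniformity of the local expression, and the final identification of $\det(\eta+\pi^*\phi(\bq))_0$ with $\sigma^*\Sigma$ is as in the paper.
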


\begin{rem}
The actual shape of the single scalar valued
differential equation~\eqref{order r} of order~$r$ is given
by~\eqref{order r qc} below. This is exactly what
we usually refer to as the
\textit{quantum curve} of~the
spectral curve $\det(\eta+\phi(\bq))=0$.
\end{rem}

\begin{proof}
Since the connection and the differential equation
\eqref{order r} are globally defined,
we need to analyze the
semi-classical limit only at each coordinate neighborhood
$U_\a$ with a projective coordinate $x_\a$.
Let
\begin{gather*}
\Psi_0 = \begin{bmatrix}
\hbar^{r-1}\psi^{(r-1)}\\ \vdots\\ \hbar \psi'\\ \psi
\end{bmatrix}\!.
\end{gather*}
From the choice of local trivialization~\eqref{trivialization} we
have an expression
\begin{gather}
\label{A}
\Psi = \big(\Delta+A(\bq, \hbar)\big)\Psi_0,
\end{gather}
where $\Delta$ is a constant diagonal
matrix
\begin{gather}
\label{Delta}
\Delta =
\bigg[\delta_{ij}(-1)^{r-i} \frac{1}{\sqrt{s_{r-1}s_{r-2}\cdots s_{i}}}\bigg]_{i,j=1,\dots,r}
\end{gather}
with the understanding that its $(r,r)$-entry is $1$,
and $A(\bq,\hbar) = \big[a(\bq,\hbar)_{i,j}\big]$
satisfies the following properties:
\begin{cond}
\label{cond:A}\qquad
\begin{itemize}\itemsep=0pt

\item $A(\bq,\hbar)$ is a nilpotent upper triangular matrix.

\item Each entry
$a(\bq,\hbar)_{i,j}$ is a differential
polynomial in $q_2,\dots,q_r$
with coefficients in $\overline{\bQ}[\hbar]$.

\item All diagonal and the first upper diagonal
entries of $A(\bq,\hbar)$ are $0$:
\begin{gather*}
a(\bq,\hbar)_{i,i} = a(\bq,\hbar)_{i,i+1} =0,
\qquad i = 1, \dots, r.
\end{gather*}
\end{itemize}
\end{cond}
The last property is because $\psi_k$ does not
contain the $(k-1)$-th derivative of $\psi$,
as we have noted above. The~diagonal matrix $\Delta$ is designed so that
\begin{gather*}
\Delta^{-1}X_-\Delta = \begin{bmatrix}
0\\[-.5ex]
-1&0
\\[-.5ex]
&-1&0
\\[-.5ex]
&&\ddots
\\[-.5ex]
&&&-1&0
\end{bmatrix}
=:-X.
\end{gather*}
Let us calculate the gauge transformation
of $\hbar\nabla^\hbar(\bq)$ by $\Delta+A(\bq, \hbar)$:
\begin{gather}
\label{gauge transf to canonical}
(\Delta + A(\bq, \hbar))^{-1}(\hbar d +\phi(\bq))(\Delta + A(\bq,\hbar))=
(\hbar d -X{\rm d}x_\a - \omega(\bq,\hbar){\rm d}x_\a).
\end{gather}
We wish to determine the matrix $\omega(\bq,\hbar)$.

\begin{lem} The matrix
$\omega(\bq,\hbar)$ of~\eqref{gauge transf to canonical}
 consists of only the first row,
and the matrix $X+\omega(\bq,\hbar)$
takes the following canonical form
\begin{gather}
\label{connection canonical form}
X+\omega(\bq,\hbar) =
\begin{bmatrix}
0& \omega_2(\bq,\hbar)&\cdots &\omega_{r-1}(\bq,\hbar)
& \omega_r(\bq,\hbar)
\\
1&0&\cdots&0&0
\\[-.5ex]
&1&\ddots&\vdots&\vdots
\\[-.5ex]
&&\ddots&0&0
\\[-.5ex]
&&&1&0
\end{bmatrix}\!.
\end{gather}
\end{lem}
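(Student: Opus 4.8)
The plan is to avoid computing the conjugation \eqref{gauge transf to canonical} entry by entry, and instead to read off the shape of $X+\omega(\bq,\hbar)$ from its action on the distinguished local section. Since $\Delta+A(\bq,\hbar)$ is invertible --- it is $\Delta$ plus a strictly upper triangular matrix --- the right-hand side of \eqref{gauge transf to canonical} genuinely defines the transformed connection, and by \eqref{A} the flat section $\Psi$ corresponds under it to the \emph{jet vector}
\[
\Psi_0=(\Delta+A(\bq,\hbar))^{-1}\Psi=(\hbar^{r-1}\psi^{(r-1)},\dots,\hbar\psi',\psi)^{t},
\]
which is therefore flat for $\hbar d-(X+\omega(\bq,\hbar))dx_\a$, i.e. $\hbar\,d\Psi_0/dx_\a=(X+\omega(\bq,\hbar))\Psi_0$. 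The key point is that the $i$-th entry of $\Psi_0$ is $\hbar^{\,r-i}\psi^{(r-i)}$, so $\hbar\,d/dx_\a$ sends it to the $(i-1)$-st entry for $i=2,\dots,r$, while it sends the first entry to the new quantity $\hbar^{r}\psi^{(r)}$. Hence $\hbar\,d\Psi_0/dx_\a$ is the vector obtained from $\Psi_0$ by shifting the components up in rows $2,\dots,r$ and placing $\hbar^{r}\psi^{(r)}$ in the first row.

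Next I would invoke the order-$r$ scalar equation \eqref{order r}: substituting into it the expressions \eqref{A} for $\psi_1,\dots,\psi_{r-1}$ turns it into a single linear relation writing $\hbar^{r}\psi^{(r)}$ as a combination of $\psi,\hbar\psi',\dots,\hbar^{r-1}\psi^{(r-1)}$ with coefficients that are differential polynomials in $q_2,\dots,q_r$ over $\overline{\bQ}[\hbar]$; these are precisely the desired $\omega_r,\dots,\omega_2$ (listed by increasing order of derivative). Thus $\hbar\,d\Psi_0/dx_\a=N\Psi_0$ where $N$ is the companion-type matrix with $1$'s on the first subdiagonal --- matching the $1$'s produced by $X$ via \eqref{X-change} --- and first row determined by \eqref{order r}. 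To conclude $X+\omega(\bq,\hbar)=N$ as matrices of holomorphic functions, and not merely as operators agreeing on one section, I would use that for $\hbar\neq0$ the equation \eqref{order r} is a genuine order-$r$ ODE with leading coefficient a nonzero multiple of $\hbar^{r}$ and with holomorphic lower-order coefficients on $U_\a$; its $r$ independent solutions then have nowhere-vanishing Wronskian on $U_\a$, so the associated jet vectors span $\bC^{r}$ at every point, which pins down the connection matrix. This yields at once that $\omega(\bq,\hbar)$ has vanishing rows $2,\dots,r$ and that $X+\omega(\bq,\hbar)$ is in the companion shape \eqref{connection canonical form}.

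The one delicate point, which I expect to be the main obstacle, is the $(1,1)$-entry: one must show that the coefficient of $\hbar^{r-1}\psi^{(r-1)}$ in the rewritten \eqref{order r} vanishes, so that the first row is $(0,\omega_2,\dots,\omega_r)$ rather than $(\omega_1,\omega_2,\dots,\omega_r)$. Here I would use the observation recorded just before the theorem: in the expression \eqref{A} each $\psi_k$ involves no $(k-1)$-st derivative of $\psi$, and its top term $\hbar^{k}\psi^{(k)}$ carries a \emph{constant}, $q$-free coefficient. Applying $\hbar\,d/dx_\a$ to $\psi_{r-1}$ in \eqref{order r} therefore produces $\hbar^{r}\psi^{(r)}$ from the top term but nothing proportional to $\hbar^{r-1}\psi^{(r-1)}$, since the top coefficient is constant and there is no $\psi^{(r-2)}$ term whose coefficient could be differentiated to contribute one; the remaining summands $q_\ell\psi_{r-\ell}$ of \eqref{order r} involve only $\psi^{(j)}$ with $j\le r-\ell\le r-2$. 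Hence no $\hbar^{r-1}\psi^{(r-1)}$ term survives, i.e. $(X+\omega(\bq,\hbar))_{1,1}=0$, completing the identification with \eqref{connection canonical form}. Aside from this derivative-order bookkeeping and the mild regularity input needed for the Wronskian argument, every step is just the tautological structure of the jet vector.
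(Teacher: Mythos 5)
Your proposal is correct, and its core is the same device the paper uses: the observation that $\Psi_0$ is the jet vector of $\psi$, so that $\hbar\,d/dx$ shifts its entries, together with the fact that the jet vectors of $r$ independent solutions span $\bC^r$ at every point of $U_\a$ (the paper phrases this via the Wronskian matrix $\mathbf{W}$ and the identity $\omega=[(\hbar\,d/dx-X)\mathbf{W}]\mathbf{W}^{-1}$). You deviate from the paper in two places. First, you skip the paper's explicit gauge-transformation computation \eqref{omega}, which shows directly from Condition~\ref{cond:A} that $\omega(\bq,\hbar)$ is strictly upper triangular; for the lemma itself this is dispensable, but that computation is also what the paper uses to record that $\omega(\bq,\hbar)$ is a \emph{polynomial} in $\hbar$ with differential-polynomial coefficients in the $q_\ell$, a fact needed later when evaluating at $\hbar=0$ in the semi-classical limit — you recover it only in passing via the structure of the $\psi_k$, so if you intend your argument to replace the paper's, make that point explicit. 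Second, for the vanishing of the $(1,1)$-entry the paper simply invokes $\tr\,\omega(\bq,\hbar)=0$, whereas you track derivative orders: since the top coefficient of $\psi_{r-1}$ is constant and $\psi_{r-1}$ contains no $\psi^{(r-2)}$ term, applying $\hbar\,d/dx_\a$ in \eqref{order r} produces no $\hbar^{r-1}\psi^{(r-1)}$ term. Both are valid; your bookkeeping argument is more elementary but leans harder on the inductive structure of the $\psi_k$ established before the lemma, while the trace argument is a one-liner once one knows the gauge transformation preserves the trace (here because $\det(\Delta+A)$ is constant).
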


\begin{proof}[Proof of Lemma]
First, define
\begin{gather}
\label{B}
B(\bq,\hbar):=\sum_{n=1}^{r-1}(-1)^n
\big(\Delta^{-1}A(\bq, \hbar)\big)^n
\end{gather}
so that
\begin{gather*}
I + B(\bq,\hbar)=
(I + \Delta^{-1}A(\bq, \hbar))^{-1}.
\end{gather*}
By definition, $B(\bq,\hbar)$ satisfies the exact same
properties listed in~Condition~\ref{cond:A} above.
We~then have
\begin{gather*}
(\Delta + A(\bq, \hbar))^{-1}
\bigg(\hbar \frac{\rm d}{{\rm d}x_\a} +\phi(\bq)\bigg)
(\Delta + A(\bq,\hbar))
\\[-.5ex] \qquad
{}= (I + \Delta^{-1}A(\bq, \hbar))^{-1}\Delta^{-1}
\bigg(\hbar \frac{\rm d}{{\rm d}x_\a} +X_-+\sum_{\ell=2}^r
q_\ell X_+^{\ell-1}\bigg)\Delta(I + \Delta^{-1}A(\bq,\hbar))
\\[-.5ex] \qquad
{}=(I + B(\bq, \hbar))\bigg(\hbar \frac{\rm d}{{\rm d}x_\a} -X+\Delta^{-1}\sum_{\ell=2}^r
q_\ell X_+^{\ell-1}\Delta\bigg)(I + \Delta^{-1}A(\bq,\hbar))
\\[-.5ex] \qquad
{}=\hbar \frac{\rm d}{{\rm d}x_\a} -X-B(\bq,\hbar)X -X\Delta^{-1}A(\bq,\hbar)+
\hbar(I + B(\bq, \hbar))\Delta^{-1}\frac{{\rm d}A(\bq, \hbar)}{{\rm d}x_\a}
\\[-.5ex] \qquad \phantom{=}
{}-B(\bq,\hbar)X\Delta^{-1}A(\bq,\hbar)+(I + B(\bq, \hbar))
\Delta^{-1}\bigg(\sum_{\ell=2}^r q_\ell X_+^{\ell-1}\Delta\bigg)
(I + \Delta^{-1}A(\bq,\hbar)).
\end{gather*}
\pagebreak

\noindent
Thus we define
\begin{gather}
\omega(\bq,\hbar)=B(\bq,\hbar)X +X\Delta^{-1}A(\bq,\hbar)
-\hbar(I + B(\bq, \hbar))\Delta^{-1}\frac{{\rm d}A(\bq, \hbar)}{{\rm d}x_\a}\nonumber
\\ \hphantom{\omega(\bq,\hbar)=}
{}+B(\bq,\hbar)X\Delta^{-1}A(\bq,\hbar)
-(\Delta + A(\bq, \hbar))^{-1}\bigg(\sum_{\ell=2}^r
q_\ell X_+^{\ell-1}\bigg)(\Delta + A(\bq,\hbar)).
\label{omega}
\end{gather}
Every single matrix in the right-hand side of
\eqref{omega} is either diagonal or
upper triangular and nilpotent, except for $X$.
Because of Condition~\ref{cond:A},
$B(\bq,\hbar)X +X\Delta^{-1}A(\bq,\hbar)$ is
an upper tri\-an\-gu\-lar matrix with $0$
along the diagonal.
Obviously, so are all other terms of~\eqref{omega}.
Thus~$\omega(\bq,\hbar)$ is upper triangular
and nilpotent, and is a polynomial in $\hbar$.

Now we note that~\eqref{trivialization} and~\eqref{A} yield
\begin{gather}
\label{Psi0 eq}
\bigg(\hbar \frac{\rm d}{{\rm d}x_\a}-X-\omega(\bq,\hbar)\bigg)
\Psi_0 = 0.
\end{gather}
 The basis vector
$\Psi_0$ defined on a simply connected
coordinate neighborhood $U_\a$ is designed
so that the following
equation holds for any solution $\psi$ of~\eqref{trivialization}:
\begin{gather*}
\bigg(\hbar \frac{\rm d}{{\rm d}x_\a}-X\bigg)\Psi_0=
\bigg(\hbar \frac{\rm d}{{\rm d}x_\a}-X\bigg)
\begin{bmatrix}
\hbar^{r-1}\psi^{(r-1)}\\
\hbar^{r-2}\psi^{(r-2)}\\
\vdots
\\
\hbar \psi'
\\
\psi
\end{bmatrix} =
\begin{bmatrix}
\hbar^r\psi^{(r)}\\
0\\
\vdots\\
0\\
0
\end{bmatrix}\!.
\end{gather*}
Let $\psi_{[1]},\dots,\psi_{[r]}$
be $r$ linearly independent solutions of
\eqref{trivialization}. The~\emph{Wronskian matrix} is defi\-ned~by
\begin{gather*}
\mathbf{W}
=
\begin{bmatrix}
\hbar^{r-1}\psi_{[1]}^{(r-1)}&\hbar^{r-1}\psi_{[2]}^{(r-1)}
&\cdots&\hbar^{r-1}\psi_{[r]}^{(r-1)}
\\
\hbar^{r-2}\psi_{[1]}^{(r-2)}&\hbar^{r-2}\psi_{[2]}^{(r-2)}
&\cdots&\hbar^{r-2}\psi_{[r]}^{(r-2)}
\\
\vdots&\vdots&\ddots&\vdots
\\
\hbar \psi_{[1]}'&\hbar \psi_{[2]}'&\cdots&\hbar \psi_{[r]}'
\\
\psi_{[1]}&\psi_{[2]}&\cdots&\psi_{[r]}
\end{bmatrix}\!.
\end{gather*}
Then from~\eqref{Psi0 eq}, we have
\begin{gather*}
\omega(\bq,\hbar)=\bigg[
\bigg(\hbar \frac{\rm d}{{\rm d}x_\a}-X\bigg)\mathbf{W}\bigg]
\cdot \mathbf{W}^{-1}=
\begin{bmatrix}
\hbar^r\psi_{[1]}^{(r)}&\hbar^r\psi_{[2]}^{(r)}&\cdots&\hbar^r\psi_{[r]}^{(r)}
\\
0&0&\cdots&0
\\
\vdots&\vdots&\ddots&\vdots
\\
0&0&\cdots&0
\end{bmatrix}
\mathbf{W}^{-1}.
\end{gather*}
Clearly we see that $\omega(\bq,\hbar)$
has non-zero entries only in the first row. The~$(1,1)$-entry is also $0$ because $\tr\, \omega(\bq,\hbar)=0$.
\end{proof}

Thus~\eqref{trivialization}
is equivalent to a single
linear ordinary differential equation of order $r$
\begin{gather}
\label{order r qc}
P_\a(x_\a,\hbar;\bq)\psi:=
\bigg[\hbar^r\bigg(\frac{\rm d}{{\rm d}x_\a}
\bigg)^r -\sum_{i=2}^r \hbar^{r-i}\omega_i(\bq,\hbar)
\bigg(\frac{\rm d}{{\rm d}x_\a}\bigg)^{r-i}\bigg]\psi = 0.
\end{gather}
In other words, $P_\a(x_\a,\hbar;\bq)$ is
a generator of the Rees $\cD$-module
$\left(E_\hbar, \hbar\nabla^\hbar(\bq)\right)$
on $U_\a$.
This expression of differential equation of
order $r$ is what is commonly referred to as a
\textit{quantum curve}. The~coefficients $\omega_i(\bq,\hbar)$ are
calculated by determining the matrix
$A(\bq,\hbar)$ of~\eqref{A}. Then
\eqref{Delta},~\eqref{B}, and~\eqref{omega}
give the exact form of $P_\a(x_\a,\hbar;\bq)$.
As we can see, its shape
is quite involved, and is \emph{not}
obtained by simply
replacing $y$
in $\det\big(y+\phi(\bq)\big)$
by $\hbar\frac{\rm d}{{\rm d}x_\a}$.
In~particular, the coefficients contain
derivatives of $q_\ell$'s, which never appear in
the characteristic polynomial of $-\phi(\bq)$.
What we are going to prove now is that nonetheless, the
semi-classical limit of~$P_\a(x_\a,\hbar;\bq)$ is
exactly the characteristic polynomial.

As defined in Definition~\ref{def:SCL},
the semi-classical limit of~\eqref{order r qc}
is the limit
\begin{gather}
\label{char poly = qc}
\lim_{\hbar \rar 0}
{\rm e}^{-\frac{1}{\hbar}S_0(x_\a)}
\bigg[\hbar^r\bigg(\frac{\rm d}{{\rm d}x_\a}\bigg)^r
\!\!-\!\sum_{i=2}^r \hbar^{r-i}\omega_i(\bq,\hbar)
\bigg(\frac{\rm d}{{\rm d}x_\a}\bigg)^{r-i}\bigg]
{\rm e}^{\frac{1}{\hbar}S_0(x_a)}
y^r\!-\!\sum_{i=2}^r \omega_i(\bq,0) y^{r-i},
\end{gather}
where $S_0(x_\a)$ is a holomorphic
function on $U_\a$ so that ${\rm d}S_0 = y{\rm d}x_\a$
gives a local trivialization of~$T^*C$ over
$U_\a$.
Since $\omega(\bq,\hbar)$ is a polynomial
in $\hbar$, we can evaluate it at $\hbar = 0$.
Notice that~\eqref{char poly = qc} is
 the characteristic polynomial
of the matrix $X+\omega(\bq,\hbar)$
of~\eqref{connection canonical form}
at $\hbar = 0$. The~computation of semi-classical limit
is the same as the calculation of
the determinant
of the connection $\hbar\nabla^\hbar(\bq)$, after
taking conjugation by the scalar diagonal
matrix ${\rm e}^{-\frac{1}{\hbar}S_0(x_\a)}I_{r\times r}$,
and then take the limit as~$\hbar$ goes to~$0$:
\begin{gather*}
{\rm e}^{-\frac{1}{\hbar}S_0(x_\a)}I\cdot
(\Delta + A(\bq,\hbar))^{-1}
\bigg(\hbar \frac{\rm d}{{\rm d}x_\a} +\phi(\bq)\bigg)
(\Delta + A(\bq,\hbar)) \cdot
{\rm e}^{\frac{1}{\hbar}S_0(x_\a)}I
\\ \qquad
{}=(\Delta + A(\bq,\hbar))^{-1}
{\rm e}^{-\frac{1}{\hbar}S_0(x_\a)}I\cdot
\bigg(\hbar \frac{\rm d}{{\rm d}x_\a} +\phi(\bq)\bigg)
\cdot {\rm e}^{\frac{1}{\hbar}S_0(x_\a)}I
(\Delta + A(\bq,\hbar))
\\ \qquad
{}=(\Delta + A(\bq,\hbar))^{-1}
\bigg(\frac{{\rm d}S_0(x_\a)}{{\rm d}x_\a} +\phi(\bq)\bigg)
(\Delta + A(\bq,\hbar)) +O(\hbar)
\\ \qquad
{}\overset{\hbar\rar 0}{\lrar}(\Delta + A(\bq,0))^{-1}
(y +\phi(\bq))(\Delta + A(\bq,0)).
\end{gather*}
The determinant of the above matrix is
the characteristic polynomial $\det \left(y+\phi(\bq)\right)$.
Note that from~\eqref{gauge transf to canonical},
we have
\begin{gather*}
{\rm e}^{-\frac{1}{\hbar}S_0(x_\a)}I\cdot
(\Delta + A(\bq,\hbar))^{-1}
\bigg(\hbar \frac{\rm d}{{\rm d}x_\a} +\phi(\bq)\bigg)
(\Delta + A(\bq,\hbar)) \cdot
{\rm e}^{\frac{1}{\hbar}S_0(x_\a)}I
\\ \qquad
{}={\rm e}^{-\frac{1}{\hbar}S_0(x_\a)}I\cdot
\bigg(\hbar \frac{\rm d}{{\rm d}x_\a} -(X + \omega(\bq,\hbar))\bigg)\cdot
{\rm e}^{\frac{1}{\hbar}S_0(x_\a)}I
\\ \qquad
{}=\hbar \frac{\rm d}{{\rm d}x_\a}+y -(X + \omega(\bq,\hbar))
\overset{\hbar\rar 0}{\lrar}y -(X + \omega(\bq,0)).
\end{gather*}
Taking the determinant of the above,
we conclude that
\begin{gather*}
\det \left(y+\phi(\bq)\right) =
y^r-\sum_{i=2}^r \omega_i(\bq,0)\; y^{r-i}.
\end{gather*}
This completes the proof of Theorem~\ref{thm:SCL}.
\end{proof}

For every $\hbar\in H^1(C,K_C)$,
the $\hbar$-connection
$\left(E_\hbar,\hbar \nabla^\hbar(\bq)\right)$
of~\eqref{Deligne} defines a global Rees
$\cD_C$-module structure in $E_\hbar$.
Thus we have constructed a universal family
$\cE_C$ of Rees
$\cD_C$-modules on a given $C$
with a fixed spin and a projective structures:
\begin{gather*}
\begin{CD}
\cE_C @<\supset << \left(E_\hbar,\nabla^\hbar(\bq)\right)
\\
@VVV @VVV
\\
C\times B\times H^1(C,K_C)
@<\supset<< \;C\times \{\bq\}\times\{\hbar\}.
\end{CD}
\end{gather*}
The universal family $\cS_C$ of spectral curves is defined
over $C\times B$.
\begin{gather*}
\begin{CD}
\bP\left(K_C\dsum \cO_C\right)\times B
@<\supset<< \cS_C @<\supset << \left(\det\big(\eta-\phi(\bq)\big)\right)_0
\\
@VVV@VVV @VVV
\\
C\times B @=C\times B
@<\supset<< \;C\times \{\bq\}.
\end{CD}
\end{gather*}
The semi-classical limit is thus a map of families
\be\label{family SCL}
\begin{CD}
\cE_C @>>> \cS_C
\\
@VVV @VVV
\\
C\times B\times H^1(C,K_C)
@>>> \;C\times B.
\end{CD}
\ee

Our concrete construction
\eqref{nabla q} using the projective coordinate system
is not restricted to the holomorphic Higgs field
$\phi(\bq)$.
Since the moduli spaces appearing in the
correspondence~\eqref{biholomorphic quantization}
become more subtle to define due to the
\emph{wildness} of connections, we avoid
the moduli problem, and state our quantization
theorem as a generalization of
Theorem~\ref{thm:construction of opers}.

Let $C$ be a smooth projective algebraic
curve of an arbitrary genus, and
$C = \cup_{\a} U_\a$ a projective coordinate
system subordinating the complex structure of~$C$.

\begin{thm}[quantization of meromorphic data]\label{thm:quantization-meromorphic}
Let $D$ be an effective divisor of~$C$, and
\begin{gather*}
\bq\in B(D) :=\bigoplus_{\ell = 2}^r H^0\big(C,K_C(D)^{\tensor \ell}\big).
\end{gather*}
We define a meromorphic Higgs bundle
$\big(E_0,\phi(\bq)\big)$
by the same formulae~\eqref{E0} and~\eqref{phi q},
as well as a meromorphic oper
$\big(E_\hbar,\nabla^\hbar(\bq)\big)$
by~\eqref{nabla q}. The~meromorphic oper
defines a meromorphic Rees $\cD$-module
\begin{gather*}
\cE(\bq) = \big(E_\hbar, \hbar\nabla^\hbar(\bq))
\end{gather*}
on $C$. Then the semi-classical limit of
$\cE(\bq)$ is the spectral curve
\begin{gather}
\label{spectral meromorphic}
\big(\det(\eta +\phi(\bq)\big)_0 \subset \overline{T^*(C)}
\end{gather}
of $-\phi(\bq)$.
\end{thm}

\begin{proof}
The proof is exactly the same as
Theorem~\ref{thm:SCL} on each simply connected
coordinate neighborhood $U_\a
\subset C\setminus \supp(D)$.
Thus the semi-classical limit
of $\cE(\bq)|_{C\setminus \supp(D)}$
as a divisor is
defined in
$\overline{T^*(C)}\setminus \pi^{-1}(D)$. The~spectral curve~\eqref{spectral meromorphic} is its closure
in $\overline{T^*C}$
with respect to the
complex topology. Thus by
Definition~\ref{Def:QC meromorphic},
\eqref{spectral meromorphic} is the semi-classical~limit of the meromorphic extension $\cE(\bq)$.
\end{proof}

\begin{ex}\label{example}
 Here we list
characteristic polynomials
and differential operators $P_\a(x_\a,\hbar; \bq)$ of~\eqref{order r qc} for $r=2,3,4$.
These formulas show that our quantization
procedure is quite non-trivial.
\begin{itemize}\itemsep=0pt
\item $r=2$:
\begin{gather*}
\det(y+\phi(\bq)) = y^2 -q_2,
\\
P_\a(x_\a,\hbar; \bq)=\bigg(\hbar\frac{\rm d}{{\rm d}x_\a}\bigg)^2 -q_2.
\end{gather*}
\item $r=3$:
\begin{gather*}
\det(y+\phi(\bq)) = y^3-4q_2y+4q_3,
\\
P_\a(x_\a,\hbar; \bq)=
\bigg(\hbar\frac{\rm d}{{\rm d}x_\a}\bigg)^3 -4q_2\bigg(\hbar
\frac{\rm d}{{\rm d}x_\a}\bigg)+4q_3-2\hbar q_2'.
\end{gather*}

\item $r=4$:
\begin{gather*}
\det(y+\phi(\bq)) = y^4 -10q_2+24q_3y-36q_4+9q_2^2,
\\
P_\a(x_\a,\hbar; \bq)=
\bigg(\hbar\frac{\rm d}{{\rm d}x_\a}\bigg)^4 -10q_2\bigg(\hbar
\frac{\rm d}{{\rm d}x_\a}\bigg)^2+(24q_3-10\hbar q_2')
\bigg(\hbar\frac{\rm d}{{\rm d}x_\a}\bigg)
\\ \hphantom{P_\a(x_\a,\hbar; \bq)=}
{}-36q_4+9q_2^2 +3\hbar^2q_2''-12\hbar q_3'.
\end{gather*}
\end{itemize}
\end{ex}

\subsection{Non-Abelian Hodge correspondence and a conjecture of Gaiotto}\label{sub:Gaiotto}
The biholomorphic map~\eqref{biholomorphic quantization}
is defined by fixing a projective structure of
the base curve $C$. Gaiotto~\cite{G}
conjectured that such a correspondence would be
canonically constructed through a
\emph{scaling limit} of non-Abelian Hodge
correspondence. The~conjecture has been solved in~\cite{DFKMMN} for~Hitchin moduli spaces
$\cM_{\Dol}$ and $\cM_{\deR}$
constructed over an arbitrary
complex simple and simply connected
Lie group $G$.
In this subsection, we review the main result of~\cite{DFKMMN}
for $G={\rm SL}(r,\bC)$ and
compare it with our quantization.

The setting of this subsection is the following. The~base curve $C$ is a compact Riemann surface
of genus $g\ge 2$. We denote by $E^{\text{top}}$
the topologically trivial complex
vector bundle of
rank $r$ on~$C$. The~prototype of the correspondence
between stable holomorphic vector bundles
on $C$ and differential geometric data goes
back to Narasimhan--Seshadri~\cite{NS}
(see also~\cite{AB, MFK}). Extending
the classical case,
the stability
condition for an ${\rm SL}(r,\bC)$-Higgs
bundle $(E,\phi)$ translates into a~differential geometric condition, known as
\emph{Hitchin's equations}, imposed on
a set of geometric data as
follows~\cite{Donaldson, H1, Simpson}. The~data consist of a Hermitian fiber metric $h$
on $E^{\rm top}$, a unitary connection
$\nabla$ in $E^{\rm top}$
with respect to~$h$, and a differentiable
$\mathfrak{sl}(r,\bC)$-valued
$1$-form $\phi$ on $C$. The~following system of nonlinear equations is
called Hitchin's equations:
\begin{gather}
\label{Hitchin's}
\begin{cases}
F_\nabla + \big[\phi, \phi^\dagger\big] = 0,\\
\nabla^{0.1}\phi = 0.
\end{cases}
\end{gather}
Here, $F_\nabla$ is the curvature of $\nabla$,
$\phi^\dagger$ is the Hermitian conjugate
of $\phi$ with respect to~$h$, and $\nabla^{0,1}$ is the Cauchy--Riemann
part of $\nabla$. $\nabla^{0,1}$ gives rise to a
natural complex structure in $E^{\rm top}$,
which we simply denote by $E$. Then
$\phi$ becomes a holomorphic Higgs field in $E$
because of the second equation of~\eqref{Hitchin's}. The~stability condition
for the Higgs pair $(E,\phi)$ is equivalent to
Hitchin's equations~\eqref{Hitchin's}. Define a one-parameter family
of connections
\begin{gather}
\label{twister}
\nabla(\zeta) := \frac{1}{\zeta}\cdot \phi + \nabla + \zeta \cdot \phi^\dagger,\qquad
\zeta\in \bC^*.
\end{gather}
Then the flatness of $\nabla(\zeta)$ for all $\zeta$
is equivalent to~\eqref{Hitchin's}.

The \emph{non-Abelian Hodge correspondence}
\cite{Donaldson, H1, Mochizuki, Simpson} (see also \cite{BB,B2012,W2008,W2008-2})
is the following diffeomorphic correspondence
\begin{gather*}
\nu\colon \ \cM_{\Dol} \owns (E,\phi)
\longmapsto \big(\widetilde{E},\widetilde{\nabla}\big)\in \cM_{\deR}.
\end{gather*}
First, we construct the solution $(\nabla,\phi,h)$ of
Hitchin's equations corresponding to
stable $(E,\phi)$. It~induces a family of flat
connections $\nabla(\zeta)$.
Then define a complex structure $\widetilde{E}$
in $E^{\rm top}$ by~\mbox{$\nabla(\zeta=1)^{0,1}$}.
Since $\nabla(\zeta)$ is flat, $\widetilde{\nabla}:=
\nabla(\zeta=1)^{1,0}$ is automatically a holomorphic
connection in~$\widetilde{E}$.
Thus $(\widetilde{E},\widetilde{\nabla})$
becomes a holomorphic connection. Stability
of $(E,\phi)$ implies
that the resul\-ting connection is irreducible, hence
$(\widetilde{E},\widetilde{\nabla})\in \cM_{\deR}$.
Since this correspondence goes through the
real unitary connection $\nabla$, the assignment
$E\longmapsto \widetilde{E}$ is not a
holomorphic deformation of~vector
bundles.

Extending the idea of the one-parameter
family~\eqref{twister}, Gaiotto
conjectured the following:

\begin{conj}[Gaiotto~\cite{G}]
\label{conj:Gaiotto}
Let $(\nabla, \phi, h)$ be the solution
of~\eqref{Hitchin's} corresponding
to a~stable Higgs bundle
$\left(E_0,\phi(\bq)\right)$ on the
${\rm SL}(r,\bC)$-Hitchin
section~\eqref{Hitchin section}.
Consider
the following two-pa\-ra\-meter family of
connections
\begin{gather*}
\nabla(\zeta,R) := \frac{1}{\zeta}\cdot R\phi + \nabla + \zeta \cdot R\phi^\dagger,
\qquad
\zeta\in \bC^*, \qquad
R\in \bR_+.
\end{gather*}
Then the scaling limit
\begin{gather*}
\lim_{\substack{R\rar 0,\,\zeta\rar 0\\
\zeta/R = \hbar}} \nabla(\zeta,R)
\end{gather*}
exists for every $\hbar\in \bC^*$, and forms
an $\hbar$-family of ${\rm SL}(r,\bC)$-opers.
\end{conj}

\begin{rem}
The existence of the limit is non-trivial, because
the Hermitian metric $h$ blows up as $R\rar 0$.
\end{rem}

\begin{rem}
Unlike the case of non-Abelian Hodge
correspondence, the Gaiotto limit works
only for a point in the Hitchin section.
\end{rem}

\begin{thm}[\cite{DFKMMN}]
Gaiotto's conjecture holds for an arbitrary
simple and simply connected complex
algebraic group $G$.
\end{thm}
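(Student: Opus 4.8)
The statement is Gaiotto's conjecture (Conjecture~\ref{conj:Gaiotto}) for an arbitrary simple, simply connected complex algebraic group $G$, established in \cite{DFKMMN}. The plan is to read the \emph{conformal (scaling) limit} \eqref{scaling} as a statement about the degeneration, as $R\to 0$, of the family of harmonic metrics $h_R$ solving Hitchin's equations \eqref{Hitchin's} for the rescaled Higgs bundles $\left(E_0,R\,\phi(\bq)\right)$ lying over the Hitchin section \eqref{Hitchin section}. Once that degeneration is understood, one extracts a finite limit of the flat connections $\nabla(\zeta,R)\big|_{\zeta=\hbar R}$ by an $R$-dependent gauge transformation, and then identifies that limit with the explicit $\hbar$-family of opers built in Theorem~\ref{thm:construction of opers} for the Fuchsian projective coordinate system.

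The first step is to exploit the symmetry of the Hitchin section coming from the principal three-dimensional subgroup. Since $\phi(\bq)=X_-+\sum_{\ell}q_\ell X_+^{\ell-1}$ is assembled from $X_\pm$ and the grading element $H$, the harmonic metric $h_R$ may be taken compatible with the $H$-grading of $E_0$, so that Hitchin's equations \eqref{Hitchin's} collapse to a finite system of semilinear elliptic equations of Toda type for the diagonal conformal factors, the rescaling entering only through the coefficients $R^{2(\ell-1)}|q_\ell|^2$. At $\bq=0$ this system is precisely the uniformization equation of $C$, whose solution is the metric inducing the Fuchsian projective structure; this is the structure that must reappear in the limit, matching Subsection~\ref{sub:Gaiotto}.

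The analytic core is then to prove that, as $R\to 0$, the solutions $h_R$ converge, on compact subsets of the complement of the branch divisor of $\pi\colon\Sigma\big(\phi(\bq)\big)\lrar C$, to the $\bq=0$ solution, with quantitative control of the asymptotics near the branch points obtained by comparison with explicit local model solutions --- of Painlev\'e/Airy type in rank $2$ and their higher-rank Toda analogues. I expect this degeneration estimate to be the main obstacle: the metric genuinely blows up at the turning points, so one needs \emph{a priori} estimates uniform in $R$ together with precise decay rates near those points, and a global argument gluing the local model data consistently across $C$.

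With these asymptotics the final step is to conjugate the flat family $\nabla(\zeta,R)$ along $\zeta=\hbar R$ by the grading gauge transformation built from an appropriate power of $h_R$ --- the analogue of $\hbar^{-H/2}$ in \eqref{hbar gauge transf 1} --- and to read off the limit: the $\tfrac{1}{\hbar}R\phi$ term survives and becomes $\tfrac{1}{\hbar}\phi(\bq)$ in a fixed holomorphic frame, the $\zeta R\,\phi^\dagger$ term tends to $0$, and the unitary part $\nabla_{h_R}$ converges to the canonical connection attached to the Fuchsian projective coordinates. This realizes the limit as $\left(E_\hbar,\nabla^\hbar(\bq)\right)$ of Theorem~\ref{thm:construction of opers}: the limiting holomorphic bundle is the filtered extension of Proposition~\ref{prop:filter}, and the oper axioms hold because $X_-$ lowers the grading by exactly one step and induces isomorphisms between consecutive graded pieces, as in the proof of Lemma~\ref{lem:Hitchin section}. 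Finally, the passage from $SL(r,\bC)$ to an arbitrary simple, simply connected $G$ is obtained by replacing the principal TDS of $sl(r,\bC)$ by Kostant's principal $sl(2)\hookrightarrow\Lieg$: the grading element, the Toda reduction, the degeneration analysis, and the identification of the limit as the $G$-oper attached to the canonical filtered extension all carry over, so Conjecture~\ref{conj:Gaiotto} follows in general.
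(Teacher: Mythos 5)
First, note that the paper itself gives no proof of this statement: it is quoted from \cite{DFKMMN}, so the only thing to compare against is the strategy of that reference and the surrounding discussion (Conjecture~\ref{conj:Gaiotto}, the remark that $h$ blows up as $R\to 0$, and the proof of the Corollary that follows). Your overall architecture does match that strategy: diagonality of the harmonic metric with respect to the $H$-grading and reduction of \eqref{Hitchin's} to a Toda-type system, identification of the $\bq=0$ solution with the hyperbolic/uniformizing metric and hence the Fuchsian projective structure, extraction of the limit of $\nabla(\zeta,R)\big|_{\zeta=\hbar R}$ by an $R$-dependent grading gauge transformation as in \eqref{hbar gauge transf 1}, identification of the limiting holomorphic structure with the filtered extension of Proposition~\ref{prop:filter}, and the passage to general $G$ via Kostant's principal $sl(2)$.

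There is, however, a genuine error in what you present as the analytic core. You claim that $h_R$ converges only on compact subsets of the complement of the branch divisor of $\pi\colon\Sigma(\phi(\bq))\to C$, with blow-up at the turning points controlled by Painlev\'e/Airy-type local models. That is the picture for the \emph{opposite} degeneration, $R\to\infty$ (the large-Higgs-field or ``Hitchin WKB'' limit), where the metric concentrates along the spectral curve and one glues in fiducial local solutions near the zeros of $\det\phi$. In the conformal limit $R\to 0$ relevant to Conjecture~\ref{conj:Gaiotto}, the gauge transformation by $R^{H/2}$ identifies $\left(E_0,R\phi(\bq)\right)$ with $\left(E_0,\phi(R^2q_2,\dots,R^rq_r)\right)$, a family converging inside the stable locus to the Fuchsian point $\left(E_0,X_-\right)$; the normalized harmonic metrics therefore converge \emph{globally and smoothly on all of $C$} to the metric induced by the constant-curvature hyperbolic metric, the blow-up of $h_R$ being entirely accounted for by the $R^{H}$ conformal factor. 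The branch points of the spectral curve play no distinguished role here, and what is actually needed is a quantitative, global rate of convergence for the Toda system (obtained by maximum-principle/sub- and super-solution comparison on all of $C$) strong enough to kill the $\zeta R\,\phi^{\dagger_{h_R}}=\hbar R^2\phi^{\dagger_{h_R}}$ term and to control the Chern connection term. As written, your local-model analysis near turning points would neither be available nor sufficient to produce the global limit on $C$, so this step needs to be replaced by the global comparison argument.
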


The universal covering of a
 compact Riemann surface $C$ is the upper-half plane
 $\bH$. The~fundamental group $\pi_1(C)$ acts
 on $\bH$ through a representation
 \begin{gather*}
\rho\colon\ \pi_1(C)\lrar {\rm PSL}(2,\bR) = \Aut(\bH),
 \end{gather*}
 and generates an analytic isomorphism
 \begin{gather*}
 C \isom \bH\big/\rho\big(\pi_1(C)\big).
 \end{gather*}
 The representation $\rho$ lifts to~${\rm SL}(2,\bR)
 \subset {\rm SL}(2,\bC)$, and induces a projective
 structure in $C$ subordinating its complex
 structure coming from $\bH$.
 This projective structure is what we call the
 \emph{Fuchsian} projective structure.

\begin{cor}[Gaiotto correspondence and
quantization]
Under the same setting of
Conjecture~$\ref{conj:Gaiotto}$, the limit
oper of~{\rm \cite{DFKMMN}} is given by
\begin{gather}
\label{Gaiotto limit}
\lim_{\substack{R\rar 0,\,\zeta\rar 0\\
\zeta/R = \hbar}} \nabla(\zeta,R)
= {\rm d}+\frac{1}{\hbar}\phi(\bq) = \nabla^\hbar(\bq),\qquad
\hbar\ne 0,
\end{gather}
with respect to the Fuchsian projective coordinate
system. The~operator~\eqref{Gaiotto limit}
is a connection in the $\hbar$-filtered extension
$(F_\hbar^\bullet, E_\hbar)$ of
Definition~$\ref{def:filtered extension}$.
In particular, the correspondence
\begin{gather*}
(E_0,\phi(\bq))\overset{\gam}{\longmapsto}
\big(E_\hbar, \nabla^\hbar(\bq)\big)
\end{gather*}
is biholomorphic, unlike the non-Abelian Hodge
correspondence.
\end{cor}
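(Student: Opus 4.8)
## Proof Proposal for the Corollary (Gaiotto correspondence and quantization)

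The plan is to reduce the identification \eqref{Gaiotto limit} to a purely local computation in the Fuchsian projective coordinate system, where the scaling limit of $\nabla(\zeta,R)$ can be computed explicitly and matched with the connection $\nabla^\hbar(\bq) = d+\frac{1}{\hbar}\phi(\bq)$ constructed in Theorem~\ref{thm:construction of opers}. First I would invoke the theorem of \cite{DFKMMN} to assert that the limit in \eqref{scaling} exists and is an $\hbar$-family of $SL(r,\bC)$-opers; the only remaining task is to recognize \emph{which} family of opers it is. The key observation is that in the Fuchsian projective structure, $\rho:\pi_1(C)\to PSL(2,\bR)$ gives an honest $SL(2,\bR)$-valued monodromy, so the harmonic metric $h$ solving Hitchin's equations \eqref{Hitchin's} for the point $\left(E_0,\phi(\bq=0)\right)$ — i.e. the uniformizing Higgs bundle with $\phi = X_-$ — is the flat metric of constant negative curvature; the associated flat connection $\nabla$ is the pullback of the Fuchsian connection. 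Thus for $\bq=0$ the limiting oper is identically the Fuchsian oper $d+\frac{1}{\hbar}X_-$ acting on $E_\hbar$, which matches \eqref{d+X} and \eqref{d+X gauge equation}.

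Next I would handle general $\bq\in B$. Here I would use the $\bC^*$-equivariance already recorded in Theorem~\ref{thm:quantization-holomorphic}: both sides of \eqref{Gaiotto limit} are sections of the bundle of opers over the Hitchin base, and both transform under the weighted $\bC^*$-action $(q_2,\dots,q_r)\mapsto(\lambda^2 q_2,\dots,\lambda^r q_r)$ in the same way — the right-hand side by the construction theorem, the left-hand side because the two-parameter family \eqref{two} is $\bC^*$-equivariant in $\phi$ and the scaling limit commutes with this action. Combined with the fact, from \cite{DFKMMN}, that the Gaiotto limit depends holomorphically on $\bq$ and that the Hitchin section image $\kappa(B)$ meets every fiber of $\mu_H$ in one point, equivariance plus the base-point match at $\bq=0$ forces agreement on all of $\kappa(B)$. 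Alternatively, and perhaps more transparently, I would argue directly: write $\nabla(\zeta,R) = R\zeta^{-1}\phi(\bq) + \nabla + \zeta R\,\phi(\bq)^\dagger$, substitute $\zeta = \hbar R$, and observe that the middle term $\nabla$ converges (after the gauge transformation that \cite{DFKMMN} uses to absorb the blow-up of $h$) to the Fuchsian connection $d + X_-$ in $E_\hbar$, the first term converges to $\hbar^{-1}\sum_{\ell\ge 2} q_\ell X_+^{\ell-1}$, and the last term $\hbar R^2\phi(\bq)^\dagger \to 0$; summing gives $d + \frac{1}{\hbar}\phi(\bq)$ by \eqref{phi q}.

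Then I would record the two immediate consequences. First, that $\nabla^\hbar(\bq)$ is a connection in the filtered extension $(F_\hbar^\bullet, E_\hbar)$ of Definition~\ref{def:filtered extension}: this is exactly the content of the third bullet of Theorem~\ref{thm:construction of opers}, which identifies the transition functions $\{f_{\a\b}^\hbar\}$ of \eqref{f hbar} with those of $E_\hbar$ from Proposition~\ref{prop:filter}. Second, the biholomorphicity of $\gamma$ is precisely Theorem~\ref{thm:quantization-holomorphic} evaluated at $\hbar=1$; in contrast with the non-Abelian Hodge map $\nu$, which factors through the real unitary connection and hence is only a real-analytic diffeomorphism, $\gamma$ is manifestly holomorphic since $\nabla^\hbar(\bq)$ depends holomorphically (indeed polynomially) on $\bq$ through \eqref{phi q} and on $\hbar$ through \eqref{nabla q gauge}.

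The main obstacle I anticipate is the first step: controlling the scaling limit \eqref{scaling} when the harmonic metric $h$ degenerates as $R\to 0$. This is genuinely the hard analytic input, but it is exactly what \cite{DFKMMN} establishes — the existence of the limit and its oper structure, together with the identification of the underlying bundle as the canonical (filtered) extension in the Fuchsian structure — so in this corollary I would cite it as a black box and concentrate on the algebraic matching of the limiting connection form with \eqref{nabla q}. The residual subtlety is bookkeeping the $\hbar$-dependent gauge transformation of \cite{DFKMMN} that renders the limit finite, and checking it is compatible with our normalization $F^{r-1} = K_C^{(r-1)/2}$; this is routine once one writes everything in the Fuchsian coordinate, because $\partial_\b^2 \xi_{\a\b}=0$ makes all the relevant cocycle identities collapse, as in Theorem~\ref{thm:construction of opers}.
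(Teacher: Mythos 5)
Your proposal follows essentially the same route as the paper: the paper's own proof is a short sketch that (i) observes that, because $E_0$ is built from powers of $K_C^{\half}$, the harmonic metric solving Hitchin's equations at $\bq=0$ is the constant-curvature hyperbolic metric, which is exactly what singles out the Fuchsian projective structure, and (ii) defers the analytic content --- existence of the scaling limit and its identification with $d+\frac{1}{\hbar}\phi(\bq)$ in the filtered extension $E_\hbar$ --- to \cite{DFKMMN}, with the filtration and biholomorphicity claims then being quotations of Theorem~\ref{thm:construction of opers} and Theorem~\ref{thm:quantization-holomorphic}. You do all of this, so the core is fine. However, the two supplementary arguments you offer for general $\bq$ do not work as stated. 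The equivariance-plus-basepoint argument is logically insufficient: two holomorphic, $\bC^*$-equivariant sections over $B$ that agree at $\bq=0$ need not agree anywhere else (e.g.\ $q\mapsto q$ and $q\mapsto 2q$ are both equivariant for a weight-one action and agree at the origin), so without an additional rigidity input this proves nothing beyond the $\bq=0$ case. The term-by-term limit is also misbooked: substituting $\zeta=\hbar R$ gives the first term as $\frac{1}{\hbar}\phi(\bq)=\frac{1}{\hbar}X_-+\frac{1}{\hbar}\sum_\ell q_\ell X_+^{\ell-1}$, so if the middle term also contributed $d+X_-$ you would obtain $(1+\frac{1}{\hbar})X_-$, which is not $\nabla^\hbar(\bq)$; in the correct statement the entire $X_-$ comes from the $\frac{1}{\hbar}\phi$ term, while the $R$-dependent unitary connection contributes, after the gauge transformation of \cite{DFKMMN}, only the $\bar\partial$-operator defining the complex structure $E_\hbar$ (compare the redistribution of $\hbar$-powers in \eqref{hbar gauge transf 2}). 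Since both of these arguments are offered as optional supplements to the citation of \cite{DFKMMN}, which carries the actual burden in the paper as well, your proof still stands on that citation; just delete or repair the two supplementary arguments.
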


\begin{proof}
The key point is that since $E_0$ is made out
of $K_C$, the fiber metric $h$ naturally comes from
the metric of~$C$ itself. Hitchin's equations
\eqref{Hitchin's} for $\bq=0$ then become a harmonic
equation for the metric of~$C$, and its solution
is given by the constant curvature hyperbolic
metric. This metric in turn defines the
Fuchsian projective structure in $C$. For~more
detail, we~reefer to~\cite{O-Paris, DFKMMN}.
\end{proof}


\section{Geometry of singular spectral curves}
\label{sect:spectral}

The quantization mechanism of
Section~\ref{sect:opers} applies to
all Hitchin spectral curves, and it is not
sensitive to whether the spectral curve is smooth or
not.
However, the local quantization mechanism
of~\cite{OM1,OM2} using the
PDE version of topological recursion
requires a \emph{non-singular} spectral curve. The~goal of this section is to
review the systematic
construction of the non-singular
models of singular Hitchin spectral curves
of~\cite{OM2}.
Then in Section~\ref{sect:WKB}, we
prove that for the case of
meromorphic ${\rm SL}(2,\bC)$-Higgs bundles,
the PDE recursion of topological type
based on the non-singular model
of this section provides
WKB analysis for the quantum curves
constructed in Section~\ref{sect:opers}.

Since the quantum curve reflects the
geometry of $\Sigma\subset \overline{T^*C}$
through semi-classical limit,
we~first need
 to identify
the choice of the blow-up space
$\operatorname{Bl}(\overline{T^*C})$
in which the non-singular model~$\widetilde{\Sigma}$ of the spectral curve
is realized as a smooth divisor.
This geometric information determines part of
the initial data for topological recursion,
i.e., the spectral curve of Eynard--Orantin,
and the differential form $W_{0,1}$.
Geometry of spectral curve also
gives us information of~singularities of the quantization. For~example,
when we have a component of
a spectral curve tangent to the divisor $C_\infty$,
the quantum curve has an irregular singular point,
and the class of the irregularity is determined by
the degree of tangency. We have
given a classification of~sin\-gu\-la\-rities of the quantum curves
in terms of the geometry of spectral curves
in~\cite{OM2}.

In what follows, we give the construction of the
canonical blow-up space $\operatorname{Bl}(\overline{T^*C})$,
and determine the genus of
the normalization $\widetilde{\Sigma}$.
This genus is necessary to identify the
Riemann prime form on it, which determines
another input datum $W_{0,2}$ for the
topological recursion.

There are two different ways of defining
the spectral curve for Higgs bundles with
meromorphic Higgs field. Our definition
uses the compactified
cotangent bundle. This idea also appears in~\cite{KS2013}. The~traditional definition,
which assumes the pole divisor $D$ of the
Higgs field to be redu\-ced,
is suitable for the study of moduli spaces
of parabolic Higgs bundles.
When we deal with non-reduced effective divisors,
parabolic structures do not play any role. Non-reduced
divisors appear naturally when we deal with
classical equations such as the Airy differential
equation, which has an irregular singular point
of class $\frac{3}{2}$
at $\infty\in \bP^1$.

Our point of view of spectral curves is also closely
related
to considering the \emph{stable pairs} of~pure
dimension $1$ on $\overline{T^*C}$. Through
Hitchin's abelianization idea, the moduli space of
stable pairs and the moduli space of Higgs bundles
are identified~\cite{HL}.

The geometric setting we start with is
a meromorphic ${\rm SL}(2,\bC)$-Higgs bundle
$\big(E_0,\phi(q)\big)$ defi\-ned on
a smooth projective algebraic curve $C$
of genus $g\ge 0$
with a fixed projective structure. Here,
\begin{gather*}
q=-\det (\phi(q)) \in H^0\big( C,K_C(D)^{\tensor 2}\big), \qquad
 \phi(q) = \begin{bmatrix}
 &q\\1
 \end{bmatrix}\!,
 \end{gather*}
and $D$ is an effective divisor of~$C$. The~spectral curve
is the zero-locus in
$\overline{T^*C}$ of the characteristic
equation
\begin{gather}
\label{SL2 spectral curve}
\Sigma :=\big(\eta^2- \pi^*(q)\big)_0.
\end{gather}
The only condition we impose here is that
\emph{the spectral curve is irreducible.}
In the language of~Higgs bundles, this condition
corresponds to the stability of $\big(E_0,\phi(q)\big)$.

Recall that $\Pic(\overline{T^*C})$ is
generated by the zero section $C_0$ of
$T^*C$ and fibers of the projection map
$\pi\colon \overline{T^*C}\lrar C$. Since the spectral
curve $\Sigma$ is a double covering of~$C$,
as a divisor it is expressed as
\begin{gather}
\label{Sigma in Pic}
\Sigma = 2C_0+\sum_{j=1}^a \pi^{*}(p_j)\in
\Pic(\overline{T^*C}),
\end{gather}
where
$\sum_{j=1}^a p_j\in\Pic^a(C)$
is a divisor on $C$ of degree $a$.
As an element of the N\'eron--Severi group
\begin{gather*}
\NS(\overline{T^*C}) =
\Pic(\overline{T^*C})/\Pic^0(\overline{T^*C}),
\end{gather*}
it is simply
\begin{gather*}
\Sigma = 2C_0+aF \in \NS(\overline{T^*C})
\end{gather*}
for a typical fiber class $F$. Since the intersection
$F\cdot C_\infty =1$, we have
$a = \Sigma \cdot C_\infty$ in $\NS(\overline{T^*C})$.
From the genus formula
\begin{gather*}
p_a(\Sigma) = \half \Sigma\cdot
(\Sigma +K_{\overline{T^*C}}) +1
\end{gather*}
and
\begin{gather*}
K_{\overline{T^*C}} = -2C_0 + (4g-4)F
\in \NS(\overline{T^*C}),
\end{gather*}
we find that the arithmetic genus of the spectral
curve $\Sigma$ is
\begin{gather}
\label{pa}
p_a(\Sigma) = 4g-3+a,
\end{gather}
where $a$ is the number of intersections
of $\Sigma$ and $C_\infty$.
Now we wish to find the geometric genus
of $\Sigma$.

Recall the following from~\cite{OM2}:

 \begin{Def}[discriminant divisor] The discriminant divisor of
the spectral curve~\eqref{SL2 spectral curve}
 is a~divisor on $C$ defined by
\begin{gather}
\label{discriminant}
\Delta:=(q)_0-(q)_\infty,
\end{gather}
where
\begin{gather}
\label{q 0}
(q)_0= \sum_{i=1}^m m_ir_i , \qquad m_i>0,\qquad q_i\in C,
\\
\label{q infinity}
(q)_\infty= \sum_{j=1}^n n_jp_j, \qquad n_j>0,\qquad p_j\in C.
\end{gather}
\end{Def}

Since $q$ is a meromorphic section of $K_C^{\tensor 2}$,
\begin{gather}\label{deg Delta}
\deg \Delta =\sum_{i=1}^m m_i - \sum_{j=1}^n n_j = 4g-4.
\end{gather}

\begin{thm}[geometric genus formula]\label{thm:geometric genus formula}Define
\begin{gather}\label{delta}
\delta=|\{i\mid m_i \equiv 1 \!\!\mod 2\}|+|\{j\mid n_j \equiv 1\!\! \mod 2\}|.
\end{gather}
Then
the geometric genus of the
spectral curve $\Sigma$
of~\eqref{SL2 spectral curve}
is given by
\begin{gather}\label{pg}
 p_g(\Sigma) = 2g-1+\half \delta.
\end{gather}
We note that~\eqref{deg Delta}
 implies $\delta \equiv 0\mod 2$.
\end{thm}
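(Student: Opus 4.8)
The plan is to combine the arithmetic genus $p_a(\Sigma)=4g-3+a$ from \eqref{pa} with the general relation $p_g(\Sigma)=p_a(\Sigma)-\sum_P\delta_P$, the sum running over the singular points $P$ of $\Sigma$, where $\delta_P=\dim_\bC\widetilde{\cO}_{\Sigma,P}/\cO_{\Sigma,P}$ is the delta-invariant. First I would pin down $a$ locally: the spectral curve $\Sigma=\{y^2=\hat q(x)\}$ (writing $q=\hat q(x)(dx)^2$ on a coordinate patch) meets $C_\infty$ exactly over the poles of $q$, and near such a point, in the chart of $\overline{T^*C}$ with fiber coordinate $v=1/\eta$, one finds the local equation $v^2=x^{n_j}\cdot(\text{unit})$, so the intersection multiplicity with $C_\infty=\{v=0\}$ is $n_j$; hence $a=\sum_j n_j=\deg(q)_\infty$.

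Next I would locate and resolve the singularities of $\Sigma$. In $T^*C$ a point of $\{y^2=\hat q(x)\}$ is singular iff $\hat q$ and $\hat q'$ vanish simultaneously, i.e.\ over the zeros $r_i$ of $q$ with $m_i\ge 2$; the analogous computation near $C_\infty$ produces singular points over the poles $p_j$ with $n_j\ge 2$. In every case the local model is $z^2=x^k\cdot u(x)$ with $u(0)\neq 0$ and $k=m_i$ (resp.\ $k=n_j$). The heart of the argument is the delta-invariant of this plane-curve singularity, which I expect to split by the parity of $k$: if $k$ is odd, the substitution $x=t^2$ gives a unibranch (cuspidal) parametrization whose local numerical semigroup is $\langle 2,k\rangle$, with $(k-1)/2$ gaps, so $\delta=(k-1)/2$; if $k$ is even, a local square root $u=w^2$ factors the equation as $\bigl(z-x^{k/2}w\bigr)\bigl(z+x^{k/2}w\bigr)=0$, two smooth branches meeting with contact order $k/2$, so $\delta=k/2$. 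In both cases $\delta=\lfloor k/2\rfloor$.

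Summing, $\sum_P\delta_P=\sum_i\lfloor m_i/2\rfloor+\sum_j\lfloor n_j/2\rfloor=\tfrac12\bigl(\sum_i m_i+\sum_j n_j\bigr)-\tfrac12\delta$, by the definition \eqref{delta} of $\delta$. Plugging in $a=\sum_j n_j$ gives $p_g(\Sigma)=4g-3+\sum_j n_j-\tfrac12\bigl(\sum_i m_i+\sum_j n_j\bigr)+\tfrac12\delta=4g-3-\tfrac12\bigl(\sum_i m_i-\sum_j n_j\bigr)+\tfrac12\delta$, and \eqref{deg Delta} (namely $\sum_i m_i-\sum_j n_j=\deg\Delta=4g-4$) collapses this to $2g-1+\tfrac12\delta$; integrality of $p_g$ then also forces $\delta\equiv 0\bmod 2$.

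The main obstacle is the bookkeeping at infinity: one must check that $(x,v)$ with $v=1/\eta$ is a genuine chart on $\overline{T^*C}$ near $(p_j,\infty)$, that $\eta^2-\pi^*q$ has no cancellation of poles where a pole-fiber of $q$ crosses $C_\infty$, and consequently that $\Sigma$ has no singularities beyond those listed, so that the computed $\delta_P$ exhaust the discrepancy $p_a-p_g$. A clean way to verify the final count — and an independent route to the whole theorem that I would include as a cross-check — is Riemann--Hurwitz for the degree-two map $\widetilde\Sigma\to C$ from the normalization, which is connected because $\Sigma$ is irreducible: the local analysis above shows this map is ramified precisely over the odd-order zeros and odd-order poles of $q$, with a single simple ramification point over each, so the total ramification is exactly $\delta$, and $2-2p_g(\Sigma)=2(2-2g)-\delta$ yields $p_g(\Sigma)=2g-1+\tfrac12\delta$ immediately.
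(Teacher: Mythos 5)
Your proposal is correct, and your primary route is genuinely different from the paper's. The paper proves \eqref{pg} directly by Riemann--Hurwitz applied to the normalization $\tilde\pi:\TSig\lrar C$: it classifies the local models into odd cusps $t^2=s^{2m+1}$ (unibranch, hence one ramification point of $\tilde\pi$ upstairs) and even points $t^2=s^{2m}$ (two branches, hence two unramified preimages), concludes that the ramification divisor of $\tilde\pi$ has exactly $\delta$ points, and reads off the genus — precisely the argument you relegate to a ``cross-check.'' Your main argument instead goes through $p_g=p_a-\sum_P\delta_P$, using $p_a(\Sigma)=4g-3+a$ from \eqref{pa}, the intersection computation $a=\Sigma\cdot C_\infty=\sum_j n_j$, and the delta-invariants $\delta_P=\lfloor k/2\rfloor$ of the local models $z^2=x^k\cdot(\text{unit})$ (semigroup $\langle 2,k\rangle$ for $k$ odd; two smooth branches with contact order $k/2$ for $k$ even); the final cancellation against $\deg\Delta=4g-4$ checks out. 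What each buys: the paper's route is shorter and needs only the parity classification of branches, sidestepping both the intersection theory on the ruled surface and the delta-invariant computations; your route makes the drop $p_a-p_g$ explicit singularity by singularity, which is more information than the theorem asks for and in particular verifies consistency between \eqref{pa} and \eqref{pg} rather than assuming it. Both routes use the same two facts you flag as the ``main obstacle'' — that $(x,v=1/y)$ is a genuine chart near $C_\infty$ and that the singularities of $\Sigma$ occur exactly over the non-reduced points of $(q)_0$ and $(q)_\infty$ with local model $z^2=x^k\cdot(\text{unit})$ — and both need irreducibility of $\Sigma$ (for connectedness of $\TSig$ in the paper's argument, for the validity of $p_g=p_a-\sum\delta_P$ in yours), which the paper imposes as a standing hypothesis.
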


\begin{rem}
If $\phi$ is a holomorphic Higgs field,
then $m= \delta = 4g-4$ and $n=0$.
Therefore, we recover the genus formula $g(\Sigma) = 4g-3$
of~\cite[equation~(2.5)]{OM1}.
In this case, the
Hitchin fibration~\eqref{Hitchin fibration} is a family of \emph{Prym} varieties,
which are $(3g-3)$-dimensional Abelian varieties
associated with the ramified covering $\pi\colon \Sigma\lrar C$.
\end{rem}

\begin{proof}
Since $\Sigma\subset
\overline{T^*C}$ is a double covering
of~$C$ in a ruled surface, locally at
every singular point $p$,
$\Sigma$ is either irreducible, or
reducible and consisting of two components.
When irreducible, it is locally isomorphic
to
\begin{gather}
\label{cusp}
t^2 - s^{2m+1} = 0, \qquad m\ge 1.
\end{gather}
If it has two components, then it is locally isomorphic to
\begin{gather}
\label{even}
t^2 -s^{2m} =(t-s^m)(t+s^m)=0.
\end{gather}
Note that the local form of
$\Sigma$ at a ramification point of
$\pi\colon \Sigma\lrar C$
 is written as~\eqref{cusp} with $m=0$.
By extending the terminology ``singularity''
to ``critical points'' of the morphism $\pi$,
we include a ramification point as a cusp
with $m=0$.

Let ${\nu}\colon \TSig\lrar \Sigma$
be the non-singular model of $\Sigma$.
 Then $ \tpi=\pi\circ{\nu}\colon\TSig \lrar C $
 is a double sheeted covering of~$C$ by a
 smooth curve $\TSig$. If
 $\Sigma$ has two components at a
 singularity $P$
 as in~\eqref{even}, then~$\tpi^{-1}(P)$
 consists of two points and $\tpi$ is
 not ramified there. If $P$ is a cusp~\eqref{cusp}, then
 $\tpi^{-1}(P)$ is a ramification point
 of the covering $\tpi$.
 Thus the invariant $\delta$ of~\eqref{delta} counts the total number of~cusp singularities of $\Sigma$
 and the ramification points of
 $\pi\colon \Sigma\lrar C$.
 Then the
 Riemann--Hurwitz formula
 gives us
 \begin{gather*}
 2-2g\big(\TSig\big) -\delta
 = 2\left( 2-2g(C)-\delta\right),
 \end{gather*}
hence
\begin{gather}
\label{pg 2}
p_g(\Sigma) = 2 g(C) -1+\half \qquad (\text{the number of cusps}).
\end{gather}
The genus formula~\eqref{pg} follows
from~\eqref{pg 2}.
\end{proof}

Our purpose is to apply topological recursion
of Section~\ref{sect:TR} to a singular spectral curve of~the form
$\Sigma$ of~\eqref{SL2 spectral curve}. To this end, we need to
construct in a canonical way the normalization
$\nu\colon\TSig\lrar \Sigma$ through a sequence of
blow-ups of the ambient space $\overline{T^*C}$.
This is because we need to construct
differential forms on $\TSig$ that reflect
 geometry of $i\colon\Sigma\lrar \overline{T^*C}$.
We thus proceed to analyze the local
structure of $\Sigma$ at each
singularity using the global
equation~\eqref{SL2 spectral curve} in what follows.

\begin{Def}[construction of the blow-up space]
\label{def:Bl}
The blow-up space $\operatorname{Bl}(\overline{T^*C})$
\begin{gather}
 \label{blow-up}
\begin{split}
&\xymatrix{
\widetilde{\Sigma}
\ar[dd]_{\tilde{\pi}}
 \ar[rr]^{\tilde{i}}\ar[dr]^{\nu}&&\operatorname{Bl}(\overline{T^*C})
\ar[dr]^{\nu}
\\
&\Sigma \ar[dl]_{\pi}\ar[rr]^{i} &&
\overline{T^*C}\ \ar[dlll]^{\pi}
\\
C 		}
\end{split}
\end{gather}
is defined by blowing up
$\overline{T^*C}$ in the following way:
{\samepage\begin{itemize}\itemsep=0pt
\item At each $r_i$ of~\eqref{q 0}, blow up
 $r_i\in \Sigma\cap C_0\subset
 \overline{T^*C}$
a total of $\big\lfloor \frac{m_i}{2}\big\rfloor$ times.
\item At each $p_j$ of~\eqref{q infinity}, blow up
at the intersection $\Sigma\cap \pi^{-1}(p_j)
\subset C_\infty$
a total of
$\big\lfloor \frac{n_j}{2}\big\rfloor$ times.
\end{itemize}

}
\end{Def}

 \begin{rem} Let us consider when both $(q)_0$ and $(q)_\infty$ are
 reduced. From the definition above, in this case
 $\Sigma$ is non-singular, and the two
 genera~\eqref{pa} and~\eqref{pg} agree. The~spectral curve is invariant under the involution
$\sigma\colon \overline{T^*C}\lrar \overline{T^*C}$
of~\eqref{involution}. If $q\in H^0(C,K_C^{\tensor 2})$
is holomorphic, then
 $\pi\colon \Sigma \lrar C$ is simply branched over
$\Delta = (q)_0$, and
$\Sigma$ is a smooth curve of genus
$4g-3$. This is in agreement of~\eqref{pa}
because $n=0$ in this case.
If $q$ is meromorphic, then
its pole divisor is given by
$(q)_\infty $ of degree $n$.
Since $(q)_\infty$ is reduced,
$\pi\colon \Sigma \lrar C$ is ramified at
the intersection of $C_\infty$ and
$\pi^{*}(q)_\infty$. The~spectral curve is also ramified at its intersection
with $C_0$.
Note that $\deg(q)_0 = 4g-4+n$ because
of~\eqref{deg Delta}.
Thus $\pi\colon\Sigma \lrar C$ is simply ramified
at a total of $4g-4+2n$ points. Therefore,
$\Sigma$ is non-singular, and we deduce
that its genus is given by
\begin{gather*}
p_g(\Sigma) = p_a(\Sigma) = 4g-3+n
\end{gather*}
from the Riemann--Hurwitz formula.
As a divisor class, we have
\begin{gather*}
\Sigma = 2C_0+\pi^*(q)_\infty\in
\Pic(\overline{T^*C}),
\end{gather*}
in agreement of~\eqref{Sigma in Pic}.
\end{rem}

\begin{thm}
In the blow-up space
$\operatorname{Bl}(\overline{T^*C})$, we have the following.
\begin{itemize}\itemsep=0pt
\item
The proper transform
$\widetilde{\Sigma}$
of the spectral curve
$\Sigma \subset \overline{T^*C}$
by the birational morphism
$\nu \colon $ $\operatorname{Bl}(\overline{T^*C})\lrar
\overline{T^*C}$ is a smooth curve with
a holomorphic map
$\tilde{\pi}=\pi\circ\nu\colon\widetilde{\Sigma} \lrar C$.

\item
The Galois action
 $\sigma\colon \Sigma\lrar \Sigma$
lifts to an involution of $\widetilde{\Sigma}$,
and the morphism
$\tilde{\pi}\colon\widetilde{\Sigma}\lrar C$
is a Galois covering with the Galois group
$\Gal(\widetilde{\Sigma}/C) =\la \tilde{\sigma}
\ra \isom \bZ/2\bZ$
\begin{gather}
\label{sigma-tilde}
\begin{CD}
\widetilde{\Sigma}@>{\nu}>>\Sigma @>{\pi}>> C
\\
@V{\tilde{\sigma}}VV @VV{\sigma}V @|
\\
\widetilde{\Sigma}@>>{\nu}> \Sigma
@>>{\pi}> C.
\end{CD}
\end{gather}
\end{itemize}
\end{thm}

 \begin{proof}
We need to consider
only when $\Delta$ is non-reduced.
Let $r_i\in \supp(\Delta)$
be a zero of $q$ of~degree
 $m_i>1$. The~curve
germ of $\Sigma$ near $r_i\in \Sigma\cap C_0$
 is given by a formula $y^2 = x^{m_i}$,
where $x$ is the base coordinate
on $C$ and $y$ a fiber coordinate.
We blow up once at $(x,y) = (0,0)$, using a local
parameter $y_1 = y/x$ on the exceptional
divisor. The~proper transform of the curve
germ becomes $y_1^2 = x^{m_i-2}$.
Repeat this process at $(x,y_1) = (0,0)$, until
we reach the equation
\begin{gather*}
y_\ell ^2 = x^{\epsilon},
\end{gather*}
where $\epsilon = 0$ or $1$. The~proper transform
of the curve germ is now non-singular.
We see that after a sequence of
$\big\lfloor \frac{m_i}{2}\big\rfloor$ blow-ups starting at
the point $r_i$,
the proper transform of $\Sigma$ is simply
ramified over $r_i\in C=C_0$ if $m_i$ is odd, and
unramified if $m_i$ is even.
We apply the same sequence of blow-ups at
each $r_i$ with multiplicity greater than $1$.

Let $p_j\in \supp(\Delta)$ be a pole of
$q$ with order
$n_j>1$. The~intersection $P = \Sigma\cap
\pi^{-1}(p_j)$ lies on~$C_\infty$,
and $\Sigma$ has a singularity at $P$. Let
$z=1/y$ be a fiber coordinate of $\pi^{-1}(p_j)$
at the infinity. Then the curve germ of $\Sigma$
at the point $P$ is given by
\begin{gather*}
z^2 = x^{n_j}.
\end{gather*}
The involution $\sigma$ in this coordinate is
simply $z\longmapsto -z$. The~blow-up process we
apply at $P$ is the same as before. After
$\lfloor \frac{n_j}{2}\rfloor$ blow-ups starting at
the point $P\in \Sigma \cap \pi^{-1}(p_j)$,
the proper transform of $\Sigma$ is simply
ramified over $p_j\in C$ if $n_j$ is odd, and
unramified if $n_j$ is even. Again we do this
process for all $p_j$ with a higher multiplicity.

The blow-up space $\operatorname{Bl}(\overline{T^*C})$
is defined as
the application of a total of
\begin{gather*}
\sum_{i=1}^m \left\lfloor \frac{m_i}{2}\right\rfloor
+\sum_{j=1}^n \left\lfloor \frac{n_j}{2}\right\rfloor
\end{gather*}
times blow-ups on $\overline{T^*C}$
as described above. The~proper transform $\TSig$ is
the minimal resolution of $\Sigma$.
Note that the morphism
\begin{gather*}
\tpi = \pi\circ\nu\colon\ \TSig\lrar C
\end{gather*}
is a double covering, ramified exactly at
$\delta$ points. Since $p_a(\TSig)
= p_g(\Sigma)$,~\eqref{pg} follows from
the Riemann--Hurwitz formula applied to~${\tpi}\colon\TSig\lrar C$. It~is also obvious that $\delta$ counts the
number of cusp points of $\Sigma$, including
smooth ramification points
of $\pi$, in agreement
 of
Theorem~\ref{thm:geometric genus formula},
and the fact that $\delta$ counts the total number
of odd cusps on $\Sigma$.

Note that $C_0$ and $C_\infty$ are point-wise
invariant under the involution $\sigma$.
Since $\operatorname{Bl}(\overline{T^*C})$ is constructed
by blowing up points on $C_0$ and $C_\infty$
and their proper transforms,
we have a natural lift
$\tsig\colon \operatorname{Bl}(\overline{T^*C})
\lrar \operatorname{Bl}(\overline{T^*C})$
of $\sigma$ which induces~\eqref{sigma-tilde}.
\end{proof}

\section{A differential version of
topological recursion}
\label{sect:TR}

The Airy example of Section~\ref{sect:Airy}
suggests that the asymptotic expansion
of a solution to a given quantum curve
at its
singularity
contains information of quantum invariants. It~also suggests that the topological recursion
of~\cite{EO1} provides an effective tool
for calculating asymptotic expansions of solutions
for quantum curves. Since a linear differential equation is
characterized by its solutions, topological recursion
can be used as a mechanism
of defining the quantization process
from a spectral curve to a quantum curve.
Then a natural question arises:

\begin{quest}
How are the
two quantizations, one with the construction
of an $\hbar$-family of~opers, and the other
via topological recursion, related?
\end{quest}

In Section~\ref{sect:WKB}, we prove that
topological recursion provides WKB analysis
of the quantum curves constructed through
$\hbar$-families of opers, for the case of
holomorphic and meromorphic
${\rm SL}(2,\bC)$-Higgs bundles. For~this purpose,
in this section
we review the framework of PDE recursion
developed in~\cite{OM1, OM2}. For~the case of singular Hitchin spectral curves,
our particular method of normalization
 of spectral curves of
 Section~\ref{sect:spectral}
 produces the same result of
 quantization of Section~\ref{sect:opers}.

If we consider a family of spectral curves
that degenerate to a singular curve, the necessity of
normalization for WKB analysis may sound
unnatural. We emphasize that the semi-classical
limit of the quantum curve thus obtained remains
the original \emph{singular} spectral curve,
not the normalization,
consistent with~\eqref{family SCL}. Thus our quantization
procedure in terms of PDE recursion is
also a natural process.

Although many aspects of our current
framework can be generalized to arbitrary
complex
simple Lie groups, since our calculation
mechanism of
Section~\ref{sect:WKB}
has been developed so far only for
the ${\rm SL}(2,\bC)$ case, we restrict our
attention to this case in this section.

We start with topological recursion
for a degree $2$ covering, not necessarily restricted
to Hitchin spectral curves. The~key ingredient of the theory is the
\textit{Riemann prime form}
$E(z_1,z_2)$ on the product
$\widetilde{\Sigma}\times \widetilde{\Sigma}$
of a compact Riemann surface $\widetilde{\Sigma}$
with values in a certain line bundle~\cite{Mumford}.
 To define the prime form, we have to make a few more
 extra choices. First, we need to choose
 a theta characteristic $K_{\widehat{\Sigma}}^\half$ such that
 \begin{gather*}
 \dim H^0\Big(\widetilde{\Sigma}, K_{\widetilde{\Sigma}}^\half\Big) = 1.
 \end{gather*}
We also need to choose a symplectic basis for
$H_1(\widetilde{\Sigma}, \bZ)$, usually
referred to as $A$-cycles and $B$-cycles.
We follow Mumford's convention and use
 the unique Riemann prime form as defined in
\cite[p.~3.210]{Mumford}.
See also \cite[Section 2]{OM1}.

\begin{Def}[topological recursion for non-singular covering]~\label{def:non-singular}
Let $C$ be a non-singular projective algebraic
curve together with a choice of a
symplectic basis for $H^1(C,\bZ)$,
 and \mbox{$\tilde{\pi}\colon\widetilde{\Sigma}\lrar C$} a
degree $2$ covering by another non-singular
curve $\widetilde{\Sigma}$. We denote by
$R$ the
ramification divisor of $\tilde{\pi}$. The~covering
$\tilde{\pi}$ is a Galois covering with
the Galois group
$\bZ/2\bZ = \la \tilde{\sigma} \ra$,
and $R$ is the fixed-point divisor of the involution
$\tilde{\sigma}$. We also choose a labeling of
points of $R$, and define the $A$-cycles
of a symplectic bases for $H_1(\widetilde{\Sigma},\bZ)$
as defined in \cite[Section 2]{OM1},
which extend the $A$-cycles of~$C$.
 \textit{Topological recursion}
is an inductive
mechanism of constructing meromorphic
differential forms $W_{g,n}$ on the Hilbert scheme
$\widetilde{\Sigma}^{[n]}$
of $n$-points on $\widetilde{\Sigma}$
for all $g\ge 0$ and
$n\ge 1$ in~the \emph{stable range}
$2g-2+n>0$, from given initial data $W_{0,1}$
and $W_{0,2}$. The~differential
form $W_{g,n}$ is a meromorphic $n$-linear
form, i.e., a $1$-form on each factor
of $\widetilde{\Sigma}^{[n]}$ for
$2g-2+n>0$.

\begin{itemize}\itemsep=0pt
\item $W_{0,1}$ is a meromorphic $1$-form
on $\widetilde{\Sigma}$ to be prescribed
according to the geometric setting we have.
Then define
\begin{gather*}
\Omega := \tilde{\sigma}^*W_{0,1}-W_{0,1},
\end{gather*}
which satisfies $\tilde{\sigma}^*\Omega = -\Omega$.

\item $W_{0,2}$ is defined by
\begin{gather*}
W_{0,2}(z_1,z_2) ={\rm d}_1 {\rm d}_2 \log E(z_1,z_2),
\end{gather*}
where $E(z_1,z_2)$ is the $A$-cycle
 normalized Riemann prime form on
 $\widetilde{\Sigma}\times \widetilde{\Sigma}$.
 $W_{0,2}$ is a~mero\-mor\-phic differential $1\tensor1$-form
 on $\TSig\times\TSig$ with $2$nd order poles along the
 diagonal.

 \item We also define the normalized
Cauchy kernel on $\widetilde{\Sigma}$ by
\begin{gather}
\label{omega a-b}
\omega^{a-b}(z): = {\rm d}_z\log
\frac{E(a,z)}{E(b,z)},
\end{gather}
which is a meromorphic
$1$-form in $z$ with simple poles at $z=a$ of residue $1$
and at $z=b$ of residue $-1$. We note that
the ratio $E(a,z)/E(b,z)$ is a meromorphic
function on the universal covering in $a$ or $b$,
but \emph{not} a meromorphic
function on $\TSig$.
We thus choose a~fun\-da\-men\-tal domain
of the universal covering and restrict $a$ and $b$
in that domain for local
calculations.

\end{itemize}

The inductive formula of the topological
recursion then takes the following shape
for $2g-2+n >0$.
\begin{gather}
W_{g,n}(z_1,\dots,z_n)=\half \frac{1}{2\pi \sqrt{-1}}
\oint_{\Gam}\frac{\omega^{\tilde{z}-z}(z_1)}{\Omega(z)}\nonumber
\\ \qquad
{}\times \Bigg[W_{g-1,n+1}(z,\tilde{z}, z_2,\dots,z_n)
+\sum_{\substack{g_1+g_2=g\\I\sqcup J=\{2,\dots,n\}}}^{{\rm No}(0,1)}
W_{g_1,|I|+1}(z,z_I)W_{g_2,|J|+1}(\tilde{z},z_J)\Bigg].
\label{integral TR}
\end{gather}
Here,
\begin{itemize}\itemsep=0pt

\item The integration contour $\Gam\subset
\TSig$
 is a collection of positively oriented small
loops around each
 point $p\in \supp (\Omega)_0\cup \supp(R)$.

\item The integration is taken with respect to~$z\in \Gam$. Thus we chose a fundamental
 domain of~the universal covering of $\TSig$
 that contains $\supp (\Omega)_0\cup \supp(R)$,
 and perform the integration locally as residue
 calculations.

\item $\tilde{z} = \tilde{\sigma}(z)$ is the Galois
conjugate of $z\in \widetilde{\Sigma}$.

\item The expression
$1/\Omega$ is a meromorphic
section of $K_C^{-1}$ which is multiplied
to meromorphic quadratic differentials
on $\widetilde{\Sigma}$ in $z$-variable.

\item ``No $(0,1)$'' means that
$g_1=0$ and $I=\varnothing$, or $g_2=0$ and
$J=\varnothing$, are excluded in the summation.

\item The sum runs over all partitions of $g$ and
set partitions of $\{2,\dots,n\}$, other than those
containing the $(0,1)$-geometry.

\item $|I|$ is the cardinality of the subset
$I\subset \{2,\dots,n\}$.

\item $z_I=(z_i)_{i\in I}$.
\end{itemize}
\end{Def}

\begin{rem}
The integrand of~\eqref{integral TR} is not
a well-defined differential form in $z\in \TSig$,
due to the definition of $\omega^{\tsig(z)-z}(z_1)$
mentioned above. What the formula defines
is a sum of residues at each
point $p\in \supp (\Omega)_0\cup \supp(R)$,
which depends on the choice of the
domain of $\omega^{\tsig(z)-z}(z_1)$ as a function
in $z$.
\end{rem}

\begin{rem}
Topological recursion depends on the
choice of the integration contour $\Gam$.
Since the integrand of the right-hand side of
\eqref{integral TR} has other poles
than the ramification divisor $R$ and
zeros of $\Omega$, other choices of
$\Gam$ are equally possible.
\end{rem}

{\sloppy\begin{rem}
Topological recursion~\eqref{integral TR} can be defined
for far more general situations. The~bottle neck of the formalism is
difficulty of integration over a high genus non-hyperelliptic
Riemann surface.
So the actual calculations have not been
done much beyond the cases when the spectral curve
$\TSig$ is of genus $0$, or hyperelliptic.
\end{rem}

}

When we have a non-singular Hitchin
spectral curve $i\colon \Sigma \hookrightarrow T^*C$
associated with a holomorphic ${\rm SL}(2,\bC)$-Higgs bundle $(E,\phi)$, we apply
Definition~\ref{def:non-singular}
to~$\widetilde{\Sigma} = \Sigma$,
$\tilde{\sigma} = \sigma$, and
$W_{0,1} = i^*\eta$,
where $\sigma$ is the involution of~\eqref{involution} and
$\eta$ is the tautological $1$-form
on $T^*C$.

Under the same setting as in
 topological recursion,
we define

\begin{Def}[PDE recursion for a smooth covering of degree $2$]
\textit{PDE recursion} is the
following partial differential equation
for all $(g,n)$ subject to~$2g-2+n\ge 2$
defined on an open neighborhood
$U^n$ of $\widetilde{\Sigma}^{[n]}$ (or the
universal covering, if the global treatment is necessary):
\begin{gather}
{\rm d}_1 F_{g,n}(z_1,\dots,z_n)= \sum_{j=2}^n
\bigg[\frac{\omega^{z_j-\tilde{\sigma}(z_j)}
(z_1)}{\Omega(z_1)}\,
{\rm d}_1F_{g,n-1}\big(z_{[\hat{j}]}\big)
-\frac{\omega^{z_j-\tilde{\sigma}(z_j)}
(z_1)}{\Omega(z_j)}\,
{\rm d}_jF_{g,n-1}\big(z_{[\hat{1}]}\big)\bigg]\nonumber
\\ \hphantom{{\rm d}_1 F_{g,n}(z_1,\dots,z_n)=}
{}+\frac{1}{\Omega(z_1)}{\rm d}_{u_1}{\rm d}_{u_2}
\bigg[F_{g-1,n+1}\big(u_1,u_2,z_{[\hat{1}]}\big)\nonumber
\\ \hphantom{{\rm d}_1 F_{g,n}(z_1,\dots,z_n)=}
{}+\sum_{\substack{g_1+g_2=g\\
I\sqcup J=[\hat{1}]}}^{\text{stable}}
F_{g_1,|I|+1}(u_1,z_I)F_{g_2,|J|+1}(u_2,z_J)\bigg]
\bigg|_{\substack{u_1=z_1\\u_2=z_1}}.
\label{differential TR}
\end{gather}
Here, the index subset $[\hat{j}]$ denotes
the complement of $j\in \{1,2,\dots,n\}$. The~final summation runs over all indices
subject to be in the stable range, i.e.,
$2g_1-1 + |I|>0$ and $2g_2-1+|J|>0$. The~initial data for the PDE recursion are
a function $F_{1,1}(z_1)$ on $U$ and
a symmetric function
$F_{0,3}(z_1,z_2,z_3)$ on $U^3$.
\end{Def}

\begin{rem}
\label{rem:PDE TR}
The PDE recursion of topological type~\eqref{differential TR}
is obtained from~\eqref{integral TR}
by replacing the original contour $\Gam$
about the ramification divisor
with the
Cauchy kernel
integration contour of~\cite{OM1,OM2} about
the diagonal
poles of $\omega^{\tsig(z)-z}(z_1)$,
$W_{0,2}(z,z_j)$, and $W_{0,2}\big(\tsig(z),z_j\big)$
at $z = z_j$ and~$ z = \tsig(z_j)$, $j=1, 2, \dots, n$.
This means the only difference
comes from the alternative choice of the contour of integration $\Gam$.
All other ingredients of the formula are the same.
Other than
the case of $g(\TSig) = 0$,
topological
recursion and PDE recursion are not equivalent.
In many enumerative problems
\cite{BHLM, CMS, OM3,DMSS,EMS,
MP2012,MS, MZ}, PDE recursions are established
through the Laplace transform of combinatorial
relations. These PDE recursions can then
be turned into the universal form of
topological recursion.
In these examples, the residue integral around
ramification points of the spectral curve
and Cauchy kernel integrations,
i.e., residue calculations
around the diagonals $z = z_j$, $z = \tsig(z_j)$, $j=1, 2, \dots, n$, are equivalent
by continuous deformation of the contour $\Gam$.
\end{rem}


\section{WKB analysis of quantum curves}
\label{sect:WKB}

We consider in this section
a meromorphic ${\rm SL}(2,\bC)$-Higgs bundle $(E_0,\phi(q))$
associated with a~meromorphic quadratic differential
$q\in H^0\big(C, K_C(D)^{\tensor 2}\big)$
with poles along an effective divisor~$D$
on a curve $C$ of arbitrary genus.
This includes the case of holomorphic
Higgs bundles when $g(C)\ge 2$ and
$D=\varnothing$. As before, we have a fixed
spin structure $K_C^\half$ and a~projective coordinate system on $C$.

Let $\left(E_\hbar, \nabla^\hbar(q)\right)$
be the biholomorphic quantization result of
Theorem~\ref{thm:quantization-meromorphic}. The~corresponding Rees $\cD_C$-module
is generated by a single differential
operator
\begin{gather}
\label{Pa}
P_\a(x_a,\hbar) = \bigg(\hbar\frac{\rm d}{{\rm d}x_\a}\bigg)^2-q_\a
\end{gather}
 on each
projective coordinate
neighborhood $U_\a$.

\begin{thm}[WKB analysis for ${\rm SL}(2,\bC)$-quantum curves]
\label{thm:WKB}
Let
$q\in H^0\big(C, K_C(D)^{\tensor 2}\big)$
be a~quad\-ra\-tic differential
with poles along an effective divisor $D$
on a curve $C$ of arbitrary genus.
We choose and fix a spin structure and
a projective coordinate system on $C$.
Theorem~$\ref{thm:quantization-meromorphic}$ tells us that we have a
unique Rees $\cD_C$-module
$\cE$ on $C$ as the quantization of the possibly
singular spectral curve
\begin{gather}
\label{eta2=q}
\Sigma = (\eta^2 - q)_0 \subset \overline{T^*C}.
\end{gather}
Then PDE recursion~\eqref{differential TR}
 with an appropriate choice of the initial
 data
provides an all-order WKB analysis
for the generator of the Rees $\cD_C$-module
$\cE$
on a small
 neighborhood in $C$ of each zero or a pole
 of $q$ of odd order.

More precisely, the WKB analysis is given as follows.
\begin{itemize}\itemsep=0pt

\item Take an arbitrary point $p\in
\supp(\Delta)$ in the discriminant divisor of~\eqref{discriminant} of odd degree.
Choose a small enough simply connected
coordinate
neighborhood $U_\a$ of~$C$ with a
projective coordinate $x_\a$, so that
$\tilde{\pi}^{-1}(U_\a)\subset \widetilde{\Sigma}$
in the normalization~\eqref{blow-up} is also simply connected.
Let $z$ be a local coordinate
of $\tilde{\pi}^{-1}(U_\a)$ and denote by a function
$x_\a = x_\a(z)$ the projection~$\tilde{\pi}$.

\item The formal WKB expansion
we wish to construct is a solution
to the equation
\begin{gather}
\label{Ppsi=0}
P_\a(x_a,\hbar)\psi_\a(x_\a,\hbar)
 = \bigg[\bigg(\hbar\frac{\rm d}{{\rm d}x_\a}\bigg)^2
-q_\a\bigg] \psi_\a(x_\a,\hbar) = 0
\end{gather}
of the specific form
\begin{gather}
\label{psi WKB}
\psi_\a (x_\a,\hbar) = \exp\bigg(\sum_{m=0}^\infty
\hbar^{m-1} S_m(x_\a)\bigg) =\exp F_\a(x_\a,\hbar).
\end{gather}
Here, $F_\a (x_\a,\hbar)$ is a formal
Laurent series in $\hbar$ starting with the
 power $-1$. In WKB analysis,
 this series in $\hbar$ does not converge.

\item
Equation \eqref{Ppsi=0} is equivalent to
\begin{gather}
\label{F eq}
\hbar^2 \frac{{\rm d}^2}{{\rm d}x_\a^2}F_\a +\hbar^2\frac{{\rm d}F_\a}{{\rm d}x_a}
\frac{{\rm d}F_\a}{{\rm d}x_a} -q_\a=0.
\end{gather}
We interpret the above equation as an infinite sequence of
ordinary differential equations for each
power of $\hbar$:
\begin{gather}
\label{h 0}
\hbar^0{\text{-terms}}\colon\quad (S_0'(x_a))^2 -q_\a=0,
\\
\label{h 1}
\hbar^1{\text{-terms}}\colon\quad
S_0''(x_\a)+2S_0'(x_\a)S_1'(x_\a) =0,
\\
\label{h 2}
\hbar^{2}{\text{-terms}}\colon\quad
S_1''(x_\a) + 2S_2'(x_\a)S_0'(x_\a)+(S_1'(x_\a))^2 = 0,
\\
\label{h m+1}
\hbar^{m+1}{\text{-terms}}\colon\quad
S_m''(x_\a) +\sum_{a+b=m+1}S_a'(x_\a)S_b'(x_\a)=0, \qquad m\ge 2.
\end{gather}
The symbol \ $'$ denotes the $x_\a$-derivative.

\item Solve~\eqref{h 0},~\eqref{h 1},
and~\eqref{h 2} to find $S_0(x_\a)$, $S_1(x_\a)$, and $S_2(x_\a)$.

\item Construct the normalization
$\tilde{\pi}\colon \widetilde{\Sigma}\lrar\Sigma$ as in~\eqref{blow-up},
and define
\begin{gather*}
W_{0,1} := \nu^*i^*\eta.
\end{gather*}

\item Define
\begin{gather}
\label{F11}
F_{1,1}(z_1) = -\int^{z_1}
\frac{W_{0,2}(z_1,\tilde{\sigma}(z_1))}
{\Omega(z_1)},
\end{gather}
where integration means a primitive of
the meromorphic $1$-form
$\frac{W_{0,2}(z_1,\tilde{\sigma}(z_1))}
{\Omega(z_1)}$ on $\tilde{\pi}^{-1}U_\a$.

\item Define
 \begin{gather}
 \label{F03}
 F_{0,3}(z_1,z_2,z_3) = \iiint \big({-}W(z_1,z_2,z_3)
 + 2\big(f(z_1)+f(z_2)+f(z_3)\big)\big),
 \end{gather}
 where
 \begin{gather*}
 W(z_1,z_2,z_3) = \frac{1}{\Omega(z_1)} \big(W_{0,2}(z_1,z_2)
 W_{0,2}\big(z_1,\tilde{\sigma}(z_3)\big) + W_{0,2}(z_1,z_3)
 W_{0,2}\big(z_1,\tilde{\sigma}(z_2)\big) \big)
 \\ \hphantom{ W(z_1,z_2,z_3) =}
 {}+{\rm d}_2 \bigg( \frac{\omega^{\tilde{\sigma}(z_2)-z_2}(z_1)
 W_{0,2}(z_2,\tilde{\sigma}(z_3))} {\Omega(z_2)} \bigg)
 \\ \hphantom{ W(z_1,z_2,z_3) =}
 {}+{\rm d}_3 \bigg( \frac{\omega^{\tilde{\sigma}(z_3)-z_3}(z_1)
 W_{0,2}(z_2,\tilde{\sigma}(z_3))} {\Omega(z_3)} \bigg)
 \end{gather*}
 and
 \begin{gather*}
 f(z) := \widetilde{S}_2(z) - \bigg(F_{1,1}(z) - \frac{1}{6}\iiint^z
 W(z_1,z_2,z_3)\bigg).
 \end{gather*}
Here, $\widetilde{S}_2(z)
=S_2\big(x_\a(z)\big)$ is the lift of $S_2(x_a)$ to~$\tilde{\pi}^{-1}(U_\a)\subset
 \widetilde{\Sigma}$.

\item Note that we have
\begin{gather*}
S_2(x_\a) = F_{1,1}\big(z(x_\a)\big)+\frac{1}{6}
F_{0,3}\big(z(x_\a),z(x_\a),z(x_\a)\big)
\end{gather*}
for a local section $z\colon U_\a\lrar \tilde{\pi}^{-1}(U_\a)$.

\item {\sloppy Solve PDE recursion~\eqref{differential TR}
using the initial data~\eqref{F11} and~\eqref{F03},
and determine
\mbox{$F_{g,n}(z_1,\dots,z_n)$} for $2g-2+n\ge 2$.

}

\item Define
\be
\label{Sm in Fgn}
S_m(x_\a) = \sum_{2g-2+n=m-1} \frac{1}{n!}
F_{g,n}\big(z(x_\a)\big),\qquad m\ge 3,
\ee
where $F_{g,n}\big(z(x_\a)\big)$ is the principal
specialization of $F_{g,n}(z_1,\dots,z_n)$
evaluated at a local section $z=z(x_\a)$ of
$\tilde{\pi}\colon \widetilde{\Sigma}\lrar C$
on $U_\a$.
\end{itemize}
Then~\eqref{psi WKB} gives the WKB
expansion of the solution to the generator
of the Rees $\cD_C$-module.
\end{thm}

\begin{rem}
The WKB method is to solve~\eqref{F eq}
iteratively and find $S_m(x_\a)$ for all $m\ge 0$. Here,
\eqref{h 0} is the
\textit{semi-classical limit}
of~\eqref{Ppsi=0}, and~\eqref{h 1} is the
\textit{consistency condition} we need for
solving the WKB expansion.
Since the $1$-form ${\rm d}S_0(x)$ is a local section
of $T^*C$, we identify $y=S_0'(x)$. Then~\eqref{h 0} is the local expression of the
spectral curve equation~\eqref{eta2=q}.
This expression is the same everywhere
on $C\setminus \supp(\Delta)$. We note
 $q$ is globally defined.
Therefore, we
recover the spectral curve $\Sigma$ from
the differential operator~\eqref{Pa}.
\end{rem}

\begin{rem}
$W_{1,1}:={\rm d}F_{1,1}$ and $W_{0,3}:={\rm d}_1{\rm d}_2{\rm d}_3
F_{0,3}$ are solutions of~\eqref{integral TR}
for $2g-2+n=1$ with respect to the contour
of integration along the diagonal divisors
mentioned in Remark~\ref{rem:PDE TR}.
\end{rem}

The key idea of~\cite{OM1,OM2} is the
principal specialization of symmetric functions,
which in our case means
 \emph{restriction}
of a PDE on a symmetric function to the
main diagonal of the variables. Differential forms
pull back, but PDEs do not. The~essence of Theorem~\ref{thm:WKB}
is that the principal specialization of
PDE recursion~\eqref{differential TR} is
exactly the quantum curve equation
\eqref{Ppsi=0}.

\begin{lem}[{\cite[Lemma A.1]{MS}}]
Let $f(z_1,\dots,z_n)$ be a symmetric function
in $n$ variables.
Then
\begin{gather*}
\frac{\rm d}{{\rm d}z}f(z,z,\dots,z)= n\bigg[\frac{\partial}{\partial u}f(u,z,\dots,z)
\bigg]\bigg|_{u=z},
\\
\frac{{\rm d}^2}{{\rm d}z^2}f(z,z,\dots,z)
=n\bigg[\frac{\partial^2}{\partial u ^2}
f(u,z,\dots,z)\bigg]\bigg|_{u=z}
\!\!\!\!\!\!+n(n-1)\bigg[\frac{\partial^2}{\partial u_1 \partial u_2}
f(u_1,u_2,z,\dots,z)\bigg]\bigg|_{u_1=u_2=z}\!\!.
\end{gather*}
For a function in one variable $f(z)$, we have
\begin{gather*}
\lim_{z_2\rar z_1}\big[\omega^{z_2-b}(z_1)(f(z_1)-f(z_2))\big]={\rm d}_1f(z_1),
\end{gather*}
where $\omega^{z_2-b}(z_1)$ is the $1$-form
of~\eqref{omega a-b}.
\end{lem}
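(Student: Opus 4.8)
The plan is to handle the two displayed identities independently, since \eqref{eq:dfdt} is a direct consequence of the multivariable chain rule together with the symmetry of $f$, whereas \eqref{eq:lhopital} is a local residue computation on $\TSig\times\TSig$ near the diagonal.

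For \eqref{eq:dfdt}, set $g(z):=f(z,z,\dots,z)$. First I would apply the chain rule to obtain $g'(z)=\sum_{i=1}^n (\partial f/\partial z_i)(z,\dots,z)$. Since $f$ is symmetric, each of the $n$ summands equals the same function of $z$, namely the principal specialization of $\partial_{z_1}f$, so the sum is $n$ times a single representative term; this is exactly the first formula. Differentiating once more gives $g''(z)=\sum_{i,j=1}^n (\partial^2 f/\partial z_i\partial z_j)(z,\dots,z)$, and I would split this double sum into the $n$ diagonal terms with $i=j$ and the $n(n-1)$ off-diagonal terms with $i\ne j$. By symmetry of $f$ all diagonal terms coincide with $[\,\partial_u^2 f(u,z,\dots,z)\,]_{u=z}$ and all off-diagonal terms coincide with $[\,\partial_{u_1}\partial_{u_2} f(u_1,u_2,z,\dots,z)\,]_{u_1=u_2=z}$, which yields precisely the weights $n$ and $n(n-1)$ and establishes the second line of \eqref{eq:dfdt}.

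For \eqref{eq:lhopital}, I would pass to a local coordinate on $\TSig$ near $z_1$ (keeping $b$ away from $z_1$) and invoke the property of the normalized Cauchy kernel recorded in \eqref{omega a-b}, namely that $\omega^{z_2-b}(z_1)$ has a simple pole along the diagonal $z_1=z_2$ with residue $1$. Hence locally one may write $\omega^{z_2-b}(z_1)=\bigl(\tfrac{1}{z_1-z_2}+h(z_1,z_2)\bigr)\,dz_1$, where $h$ is holomorphic across the diagonal. Multiplying by $f(z_1)-f(z_2)$ and letting $z_2\to z_1$, the polar part contributes $\lim_{z_2\to z_1}\tfrac{f(z_1)-f(z_2)}{z_1-z_2}\,dz_1=f'(z_1)\,dz_1=d_1 f(z_1)$ by the definition of the derivative, while the holomorphic part contributes $h(z_1,z_1)\cdot 0=0$; adding these gives the claimed limit.

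All three computations are elementary, so strictly speaking there is no deep obstacle; the only step requiring genuine care is the local model $\omega^{z_2-b}(z_1)=\bigl(\tfrac{1}{z_1-z_2}+h\bigr)\,dz_1$ with $h$ regular on the diagonal. This follows from the simple-pole, residue-one normalization of the kernel stated in \eqref{omega a-b}, which in turn rests on the Riemann prime form $E(z_2,z_1)$ vanishing to exactly first order along the diagonal, so that $d_{z_1}\log E(z_2,z_1)$ supplies the residue-one pole and $d_{z_1}\log E(b,z_1)$ stays regular near $z_1=z_2$. With this normalization in hand (following the Mumford convention cited just before the lemma), the limit interchange is immediate and the proof concludes.
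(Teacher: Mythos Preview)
Your proof is correct. The paper does not actually prove this lemma; it merely states it with a citation to \cite[Lemma A.1]{MS}, so there is no in-paper argument to compare against. Your approach---chain rule plus symmetry for \eqref{eq:dfdt}, and the local expansion of the Cauchy kernel near the diagonal for \eqref{eq:lhopital}---is the standard elementary verification, and all steps are sound.
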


\begin{proof}[Proof of Theorem~\ref{thm:WKB}]
First let $P\in \Sigma\cap C_\infty$ be an odd cusp singularity
on the fiber $\pi^{-1}(p)$ of a point
$p\in C = C_0 \subset \overline{T^*C}$. The~quadratic differential $q$ has a pole of
 odd order at $p$, and the normalization
$\tilde{\pi}\colon \widetilde{\Sigma}\lrar C$ is
simply ramified at a point $Q
\in \widetilde{\Sigma}$ over $p$.
 We choose
a local projective coordinate $x$ on $C$ centered
at $p$. The~Galois action
of $\tilde{\sigma}$ on $\widetilde{\Sigma}$
fixes $Q$. As we have shown in Case~1 of the proof of
Theorem~\ref{thm:geometric genus formula},
locally over $p$,
the spectral curve $\Sigma$ has the shape
\begin{gather*}
z_0^2 = c(x) x^{2\mu+1},
\end{gather*}
where $z_0=1/y$, $y$ is the fiber coordinate
on $T_p^*C$, and $c(x)$ is
 a unit $c(x)\in \cO_{C,p}^*$. The~quadratic differential $q$
has a local expression
\begin{gather*}
q = \frac{1}{c(x) x^{2\mu+1}}.
\end{gather*}

Define $z_1 = z_0/x$. The~proper transform
of $\widetilde{\Sigma}$ after the first
blow up at $P$ is locally written~by
\begin{gather*}
z_1^2 = c(x)x^{2\mu-1}.
\end{gather*}
Note that $z_1$ is an affine coordinate of the first
exceptional divisor.
 Repeating this process $\mu$-times, we end up with
 a coordinate $z_{\mu-1} = z_\mu x$ and
 an equation
 \begin{gather*}
 z_\mu ^2 = c(x)x.
 \end{gather*}
 Here again, $z_\mu$ is an affine coordinate of
 the exceptional divisor created by the
 $\mu$-th blow-up.
 Write $z=z_\mu$ so that the proper transform
 of the $\mu$-times blow-ups is given by
 \begin{gather}
 \label{z and x}
 z^2 = c(x)x.
 \end{gather}
 Note that the Galois action of $\tilde{\sigma}$
 at $Q$ is simply $z\longmapsto -z$.
 Solving~\eqref{z and x} as a functional equation,
 we obtain a Galois invariant local expression
 \begin{gather*}
 x = x(z) = c_Q(z^2) z^2,
 \end{gather*}
 where $c_Q\in \cO_{\widetilde{\Sigma},Q}^*$
 is a unit element. This function
 is precisely the local expression of the
 normalization $\tilde{\pi}\colon\TSig
 \lrar C$ at $Q\in \TSig$.
 Since
 \begin{gather*}
 z = z_\mu = \frac{z_0}{x^\mu}=\frac{1}{yx^\mu},
 \end{gather*}
 we have thus obtained the normalization
 coordinate $z$ on the desingularized
 curve $\TSig$ near $Q$:
 \begin{gather*}
 \begin{cases}
 x = x(z) = c_Q\big(z^2\big) z^2,\\[1ex]
 y = y(z) = \dfrac{1}{zx^\mu}
 = c_Q\big(z^2\big)^{-\mu} z^{-2\mu-1}.
 \end{cases}
 \end{gather*}
 It gives a parametric
 equation for the singular spectral curve $\Sigma$:
 \begin{gather*}
y^2 = \frac{1}{c(x)x^{2\mu+1}}.
 \end{gather*}

As we have shown in the proof of Theorem~\ref{thm:geometric genus formula},
 the situation is the same for a zero of $q$ of~odd order.

For the purpose of local calculation
near $Q\in \TSig$, we use
the following local expressions:
\begin{gather}
\omega^{\tsig(z) - z}(z_1)=
\bigg(\frac{1}{z_1-\tsig(z)}-\frac{1}{z_1-z} + O(1)\bigg){\rm d}z_1,\nonumber
\\
\label{eta local}
\eta=y\,{\rm d}x = h(z)\,{\rm d}z := \frac{1}{z^{2\mu}}(1 + O(z))\,{\rm d}z.
\end{gather}
Here, we adjust the normalization coordinate
$z$ by a constant factor to make~\eqref{eta local}
simple.

Using the notation $\partial_z = \partial/\partial z$,
we have a local formula equivalent to~\eqref{differential TR} that is valid
for $2g-2+n\ge 2$:
\begin{gather}
\partial_{z_1} F_{g,n}(z_1,\dots,z_n)= -\!\sum_{j=2}^n\!
\bigg[\frac{\omega^{z_j-\tsig(z_j)}(z_1)}{2h(z_1)\,{\rm d}z_1}\,
\partial_{z_1}F_{g,n-1}(z_{[\hat{j}]})-\frac{\omega^{z_j-\tsig(z_j)}
(z_1)}{{\rm d}z_1\cdot 2h(z_j)}\,
\partial_{z_j}F_{g,n-1}\big(z_{[\hat{1}]}\big)\bigg]\nonumber
\\ \hphantom{\partial_{z_1} F_{g,n}(z_1,\dots,z_n)=}
{}-\frac{1}{2h(z_1)}\frac{\partial^2}{\partial u_1\partial u_2}
\bigg[F_{g-1,n+1}\big(u_1,u_2,z_{[\hat{1}]}\big)
\\ \hphantom{\partial_{z_1} F_{g,n}(z_1,\dots,z_n)=}\nonumber
{}+\sum_{\substack{g_1+g_2=g\\
I\sqcup J=[\hat{1}]}}^{\text{stable}}
F_{g_1,|I|+1}(u_1,z_I)F_{g_2,|J|+1}(u_2,z_J)\bigg]
\bigg|_{\substack{u_1=z_1\\u_2=z_1}}.
\label{Fgn local}
\end{gather}
Let us apply principal specialization. The~left-hand side becomes $\frac{1}{n}
\partial_z F_{g,n}(z,\dots,z)$.
To calculate the contributions from
the first line of the right-hand side of~\eqref{Fgn local},
we choose $j>1$ and set $z_i=z$ for all $i$
except for $i=1,j$. Then take the limit $z_j\rar z_1$.
In this procedure, we note that the contributions
from the simple pole
of $\omega^{z_j-\tsig(z_j)}(z_1)$
at $z_1 = \tsig(z_j)$ cancel at $z_1=z_j$.
Thus we~obtain
\begin{gather*}
-\sum_{j=2}^n \frac{1}{z_1-z_j}
\bigg(\frac{1}{2h(z_1)}\,\partial_{z_1}F_{g,n-1}
(z_1,z,\dots,z)
-\frac{1}{2h(z_j)}\,\partial_{z_j}F_{g,n-1}
(z_j,z,\dots,z)\bigg)\bigg|_{z_1=z_j}
\\ \qquad
{}=-\sum_{j=2}^n\partial_{z_1}
\bigg(\frac{1}{2h(z_1)}\,\partial_{z_1}F_{g,n-1}(z_1,z,\dots,z)\bigg)
\\ \qquad
{}=-(n-1)\partial_{z_1}
\bigg(\frac{1}{2h(z_1)}\,\partial_{z_1}F_{g,n-1}(z_1,z,\dots,z)\bigg)
\\ \qquad
{}=-(n-1)\partial_{z_1}
\bigg(\frac{1}{2h(z_1)}\bigg)\partial_{z_1}F_{g,n-1}(z_1,z,\dots,z)
-\frac{n-1}{2h(z_1)}\,\partial_{z_1}^2F_{g,n-1}(z_1,z,\dots,z).
\end{gather*}
The limit $z_1\rar z$ then produces
\begin{gather}
-\partial_z \frac{1}{2h(z)}\,
\partial_z F_{g,n-1}(z,\dots,z)
-\frac{1}{2h(z)}\,\partial_z^2 F_{g,n-1}(z,\dots,z)\nonumber
\\ \qquad
{}+\frac{(n-1)(n-2)}{2h(z)}\frac{\partial^2}
{\partial u_1\partial u_2}F_{g,n-1}(u_1,u_2,z,\dots,z)\bigg|_{u_1=u_2=z}.
\label{unstable}
\end{gather}
To calculate the principal specialization of
the second line of the right-hand side
of~\eqref{Fgn local},
we~note that since all points $z_i$'s for $i\ge 2$
are set to be equal,
a set partition by index sets $I$ and $J$
becomes a partition of $n-1$ with a combinatorial
factor that counts the redundancy. The~re\-sult~is
\begin{gather}
-\frac{1}{2h(z)}\frac{\partial^2}{\partial u_1\partial u_2}
F_{g-1,n+1}(u_1,u_2,z,\dots,z)\bigg|_{u_1=u_2=z}\nonumber
\\ \qquad
{}-\frac{1}{2h(z)}\sum_{\substack{g_1+g_2=g\\n_1+n_2=n-1}}^{\text{stable}}
\binom{n-1}{n_1}\partial_zF_{g_1,n_1+1}(z,\dots,z)\,
\partial_zF_{g_2,n_2+1}(z,\dots,z).
\label{stable}
\end{gather}
Assembling~\eqref{unstable} and~\eqref{stable}
together, we obtain
\begin{gather}
\frac{1}{2h(z)}\bigg[\partial_z^2 F_{g,n-1}(z,\dots,z)
+\sum_{\substack{g_1+g_2=g\\n_1+n_2=n-1}}^{\text{stable}}
\binom{n-1}{n_1}\partial_z F_{g_1,n_1+1}(z,\dots,z)\,
\partial_zF_{g_2,n_2+1}(z,\dots,z)\bigg]\nonumber
\\ \qquad
{}+\frac{1}{n}\partial_zF_{g,n}(z,\dots,z)
+\partial_z \frac{1}{2h(z)}\,\partial_z F_{g,n-1}(z,\dots,z)\nonumber
\\ \qquad
{}=\frac{(n-1)(n-2)}{2h(z)}\frac{\partial^2}
{\partial u_1\partial u_2}F_{g,n-1}(u_1,u_2,z,\dots,z)\bigg|_{u_1=u_2=z}\nonumber
\\ \qquad\hphantom{=}
{}-\frac{1}{2h(z)}\frac{\partial^2}{\partial u_1\partial u_2}
F_{g-1,n+1}(u_1,u_2,z,\dots,z)\bigg|_{u_1=u_2=z}.
\label{ps1}
\end{gather}

We now apply the operation
$\sum_{2g-2+n=m}\frac{1}{(n-1)!}$ to~\eqref{ps1} above, and write the result in
terms~of
\begin{gather*}
S_m(z) :=S_m(x(z))= \sum_{2g-2+n=m-1}\frac{1}{n!}F_{g,n}(z,\dots,z)
\end{gather*}
of~\eqref{Sm in Fgn}
to fit into the WKB formalism~\eqref{psi WKB}.
We observe that summing over all possibilities of~$(g,n)$ with the fixed value of $2g-2+n$,
the right-hand side of~\eqref{ps1} exactly cancels
out. Thus we have established
that the functions $S_m(z)$ of~\eqref{Sm in Fgn}
for $m\ge 2$ satisfy the recursion formula
\begin{gather}
\label{Sm recursion}
\frac{1}{2h(z)}\Bigg(\frac{{\rm d}^2S_m}{{\rm d}z^2}+
\sum_{\substack{a+b=m+1\\a,b\ge 2}}
\frac{{\rm d}S_a}{{\rm d}z} \frac{{\rm d}S_b}{{\rm d}z}\Bigg)
+\frac{{\rm d}S_{m+1}}{{\rm d}z}+\frac{{\rm d}}{{\rm d}z}
\bigg(\frac{1}{2h(z)}\bigg) \frac{{\rm d}S_m}{{\rm d}z}=0.
\end{gather}

Using~\eqref{eta local} we identify the derivation
\begin{gather}
\label{d/dx}
\frac{\rm d}{{\rm d}x} = \frac{y}{h(z)}\frac{{\rm d}}{{\rm d}z},
\end{gather}
which is the push-forward $\tpi_*({\rm d}/{\rm d}z)$
of the vector field ${\rm d}/{\rm d}z$. The~transformation~\eqref{d/dx} is
singular at $z=0$.
If we allow terms $a=0$ or $b=0$ in~\eqref{Sm recursion}, then what we have in
addition is
\begin{gather*}
\frac{1}{2h(z)}\cdot 2\,\frac{{\rm d}S_0}{{\rm d}z}
\frac{{\rm d}S_{m+1}}{{\rm d}z} = \frac{1}{h(z)}
\frac{h(z)}{y} \frac{{\rm d}S_0}{{\rm d}x}\frac{{\rm d}S_{m+1}}{{\rm d}z}
=\frac{{\rm d}S_{m+1}}{{\rm d}z},
\end{gather*}
since ${\rm d}S_0= y{\rm d}x$.
In other words, the $\frac{{\rm d}S_{m+1}}{{\rm d}z}$ term
already there in~\eqref{Sm recursion} is absorbed
in the split differentiation for $a=0$ and $b=0$.

From~\eqref{d/dx} and~\eqref{h 0}, we find that
the second derivative with respect
to~$x$ is given by
\begin{gather*}
\frac{{\rm d}^2}{{\rm d}x^2} = \frac{\rm d}{{\rm d}x}
\bigg(\frac{S_0'}{h(z)}\frac{{\rm d}}{{\rm d}z}\bigg)
=\frac{(S_0')^2}{h(z)^2}\frac{{\rm d}^2}{{\rm d}z^2}
+\frac{S_0'}{h(z)}\frac{{\rm d}}{{\rm d}z}\bigg(\frac{S_0'}{h(z)}
\bigg) \frac{{\rm d}}{{\rm d}z},
\end{gather*}
denoting by $S_0'={\rm d}S_0/{\rm d}x$. Then~\eqref{h m+1} yields
\begin{gather}
\label{hbar m+1}
\frac{(S_0')^2}{h(z)^2}
\bigg(\frac{{\rm d}^2}{{\rm d}z^2} S_m +\sum_{a+b=m+1} \frac{{\rm d}S_a}{{\rm d}z}\frac{{\rm d}S_b}{{\rm d}z}\bigg)
+\frac{S_0'}{h(z)}\frac{{\rm d}}{{\rm d}z}\bigg(\frac{S_0'}{h(z)}
\bigg) \frac{{\rm d}S_m}{{\rm d}z}=0.
\end{gather}
The coefficients of
${\rm d}S_m/{\rm d}z$ in~\eqref{hbar m+1} are
\begin{gather*}
2 \frac{(S_0')^2}{h(z)^2}\frac{{\rm d}S_1}{{\rm d}z}
+\frac{S_0'}{h(z)}\frac{{\rm d}}{{\rm d}z}\bigg(\frac{S_0'}{h(z)}\bigg)
=2 \frac{(S_0')^2}{h(z)^2}\frac{h(z)}{S_0'} S_1'
+\frac{\rm d}{{\rm d}x}\bigg(\frac{S_0'}{h(z)}\bigg)
\\ \hphantom{2 \frac{(S_0')^2}{h(z)^2}\frac{{\rm d}S_1}{{\rm d}z}
+\frac{S_0'}{h(z)}\frac{{\rm d}}{{\rm d}z}\bigg(\frac{S_0'}{h(z)}\bigg)}
{}=\frac{1}{h(z)}\big(2S_0'S_1'+S_0''\big)
+S_0'\frac{\rm d}{{\rm d}x}\bigg(\frac{1}{h(z)}\bigg)
=S_0'\frac{\rm d}{{\rm d}x}\bigg(\frac{1}{h(z)}\bigg)
\\ \hphantom{2 \frac{(S_0')^2}{h(z)^2}\frac{{\rm d}S_1}{{\rm d}z}
+\frac{S_0'}{h(z)}\frac{{\rm d}}{{\rm d}z}\bigg(\frac{S_0'}{h(z)}\bigg)}
{}=\frac{(S_0')^2}{h(z)^2}\cdot 2h(z)\frac{{\rm d}}{{\rm d}z}\bigg(\frac{1}{2h(z)}\bigg)
=\frac{2(S_0')^2}{h(z)}\frac{{\rm d}}{{\rm d}z}\bigg(\frac{1}{2h(z)}\bigg).
\end{gather*}
This is exactly what the last term of~\eqref{Sm recursion} has, after
adjusting overall multiplication by $\frac{2(S_0')^2}{h(z)}$.
Therefore, we have established that~\eqref{h 0},~\eqref{h 1}, and~\eqref{h 2}
make~\eqref{h m+1} equivalent to~\eqref{Sm recursion}.
This competed the proof of~Theorem~\ref{thm:WKB}.
\end{proof}

\begin{rem}In the examples of various
Hurwitz numbers considered in~\cite{BHLM, MS}, PDE recursions can be applied
to quantize the spectral curves (generalized Lambert curves)
and obtain second-order linear \emph{partial} differential equations.
Since spectral curves are analytic, the direct quantization
yields difference-differential equations in one variable.
These different quantization results are compatible
in the sense that the same
asymptotic solution~\eqref{psi WKB} satisfies both equations. A~geometric interpretation
is still missing for the
analysis of these two different quantization mechanisms.
\end{rem}

\begin{rem}
Geometry of normalization of the
singular spectral curve leads us to
the analysis of Stokes phenomena. It~is beyond the scope of current paper,
and will be treated elsewhere.
\end{rem}


\section{A simple classical example}
\label{sect:Airy}

Riemann and Poincar\'e
worked on
an interplay between algebraic geometry
of curves in a ruled surface and
the asymptotic expansion
 of an analytic solution
 to a differential equation defined on the
base curve of the ruled surface. The~theme of the current paper lies exactly on this
link, looking at this classical subject from a
modern point of view.
The
simple examples for ${\rm SL}(2,\bC)$-meromorphic
Higgs bundles
on $\bP^1$
 illustrate the relation between
a Higgs bundle, the compactified cotangent bundle
of a curve, a quantum curve, a classical differential
equation, non-Abelian Hodge correspondence,
and the quantum invariants that the quantum curve
captures.


\subsection{The Higgs bundle
for the Airy function}

The Higgs bundle $(E,\phi)$ we consider
consists of the base curve $C=\bP^1$
and a particular vector bundle
\begin{gather*}
E_0=K_{\bP^1}^\half\dsum K_{\bP^1}^{-\half}
=\cO_{\bP^1}(-1)\dsum \cO_{\bP^1}(1)
\end{gather*}
 of rank $2$ on ${\bP^1}$. A~meromorphic Higgs field is given by
\begin{gather}
\label{2x2}
\phi=\begin{bmatrix}
\phi_{11}&\phi_{12}\\
\phi_{21}&\phi_{22}
\end{bmatrix}\!\!
\colon \quad E\lrar E\tensor K_{\bP^1}(m), \qquad m\ge 0.
\end{gather}
Each matrix component is given by
\begin{gather*}
 \phi_{11}\colon\quad K_{\bP^1}^\half \lrar K_{\bP^1}^{\half}\tensor K_{\bP^1}(m)
=K_{\bP^1}^{\frac{3}{2}}(m),\qquad
\phi_{11}\in H^0(C,K_{\bP^1}(m)),
\\
\phi_{12}\colon\quad K_{\bP^1}^{-\half} \lrar K_{\bP^1}^{\half}\tensor K_{\bP^1}(m)
=K_{\bP^1}^{\frac{3}{2}}(m), \qquad
\phi_{12}\in H^0\big(C,K_{\bP^1}^{\tensor 2}(m)\big),
\\
\phi_{21}\colon\quad K_{\bP^1}^\half \lrar K_{\bP^1}^{-\half}\tensor K_{\bP^1}(m)
=K_{\bP^1}^\half(m),\qquad
\phi_{21}\in H^0(C,\cO_{\bP^1}(m)),
\\
\phi_{22}\colon\quad K_{\bP^1}^{-\half} \lrar K_{\bP^1}^{-\half}\tensor K_{\bP^1}(m)
=K_{\bP^1}^{\frac{1}{2}}(m),\qquad
\phi_{22}\in H^0(C,K_{\bP^1}(m)).
\end{gather*}
Since we are considering a point on a Hitchin section,
we take $\phi_{21}=1$ to be the identity map
$K_{\bP^1}^\half \overset{\isom}{\lrar} K_{\bP^1}^\half\hookrightarrow
K_{\bP^1}^\half(m)$,
and $\phi_{11}=\phi_{22}=0$. When we allow
singularities, we can make
other choices for $\phi_{21}$ as well.

The {Planck constant}
$\hbar$ has a geometric meaning~\eqref{Planck}
as a parameter of the extension classes
of line bundles. For~$\bP^1$, it is
\begin{gather*}
\hbar\in \Ext^1\Big(K_{\bP^1}^{-\half},K_{\bP^1}^{\half}\Big)
\isom H^1\big(\bP^1,K_{\bP^1}\big) = \bC.
\end{gather*}
It determines the unique extension
\begin{gather}
\label{extension}
0\lrar K_{\bP^1}^\half \lrar E_\hbar
\lrar K_{\bP^1}^{-\half} \lrar 0,
\end{gather}
where
\begin{gather}
\label{E_hbar}
E_\hbar \isom \begin{cases}
\cO_{\bP^1}(-1) \dsum \cO_{\bP^1}(1), &\hbar = 0,
\\
\cO_{\bP^1}\dsum \cO_{\bP^1}, &\hbar\ne 0
\end{cases}
\end{gather}
as a vector bundle, since
every vector bundle on $\bP^1$ splits.

The quantization of the Higgs field $\phi$ is
an $\hbar$-connection of Deligne in $E_\hbar$
defined on $\bP^1$ and is given by $\hbar \nabla^\hbar$, where
\begin{gather}
\label{hbar-connection intro}
\nabla^\hbar ={\rm d} +\frac{1}{\hbar}\phi\colon\ E_\hbar \lrar E_\hbar \tensor K_{\bP^1}(m),
\end{gather}
and ${\rm d}$ is the exterior differentiation operator
acting on sections of the trivial
bundle $E_\hbar$ for~$\hbar\ne 0$. The~operator
$\nabla^\hbar $ is a meromorphic
connection in the vector bundle $E_\hbar$.
Of course $d+\phi$ is never a connection
in~general, because $\phi$ is a Higgs field
belonging to a different bundle and satisfying a
different transition rule with respect to
coordinate changes. However,
as explained in~Section~\ref{sect:opers}
(see also~\cite{O-Paris, OM4}), a~Higgs field $\phi$
associated with a complex simple Lie group
on a Hitchin section gives rise to a
connection $\nabla^\hbar={\rm d}+\frac{1}{\hbar}\phi$ in $E_\hbar$ with respect to the
coordinate system associated with
a \emph{projective structure} subordinating
the complex structure of the base curve.
Since our examples are constructed on
$\bP^1$, the affine coordinate
$x\in \bA^1=\bP^1\setminus\{\infty\}$ is a
natural coordinate representing the
projective structure. Hence the
expression $d+\phi$ makes sense as a
connection
in $E_\hbar$ for every ${\rm SL}(2,\bC)$-Higgs bundle
$(E_0,\phi)$.
 This is due to
the vanishing of the Schwarzian derivatives
for the coordinate change in
a projective structure
 (see~\cite{O-Paris} for more detail on
 how the Schwarzian derivative plays a role here).

To see the effect of quantization, i.e., the passage
from the Higgs bundle to an $\hbar$-family
of connections~\eqref{hbar-connection intro},
 let us use the local coordinate
and write everything concretely. The~transition function
defined on $\bC^*=U_\infty \cap U_0$
 of the vector bundle
$E_0$
on $\bP^1=U_\infty \cup U_0$
is given by
$\left[\begin{smallmatrix}
x\\
&\frac{1}{x}
\end{smallmatrix}\right]$,
where $U_0=\bP^1\setminus \{\infty\} = \bA^1 $ and
$U_\infty= \bP^1\setminus \{0\}$. With respect
to the same coordinate,
the extension $E_\hbar$ is given by
$
\left[\begin{smallmatrix}
x&\hbar\\
&\frac{1}{x}
\end{smallmatrix}\right]
$.
The equality
\begin{gather*}
\begin{bmatrix}
1\\
-\frac{1}{\hbar x}&1
\end{bmatrix}
\begin{bmatrix}
x &\hbar\\
&\frac{1}{x}
\end{bmatrix}
\begin{bmatrix}
&-\hbar\\
\frac{1}{\hbar}&x
\end{bmatrix}
=
\begin{bmatrix}
1\\
&1
\end{bmatrix}
\end{gather*}
proves~\eqref{E_hbar}. The~local expressions of the $1$-form
$\phi_{11}$ and the quadratic differential
$\phi_{12}$ satisfy
\begin{gather*}
(\phi_{11})_u du = (\phi_{11})_x {\rm d}x,
\qquad
(\phi_{12})_u du^2 = (\phi_{12})_x {\rm d}x^2,
\end{gather*}
where $u=1/x$ is a coordinate on $U_\infty$.
Then the local
expressions of the Higgs field~\eqref{2x2}
with $\phi_{22}=0$
satisfy the following
transition relation with respect to~$E_0$:
\begin{gather*}
\label{Higgs transition}
\begin{bmatrix}
(\phi_{11})_u&-(\phi_{12})_u\\-1&
\end{bmatrix}{\rm d}u
=\begin{bmatrix}
x&\\&\frac{1}{x}
\end{bmatrix}
\begin{bmatrix}
(\phi_{11})_x&(\phi_{12})_x\\1&
\end{bmatrix}{\rm d}x
\begin{bmatrix}
x&\\&\frac{1}{x}
\end{bmatrix}^{-1}.
\end{gather*}
The negative signs are due to~${\rm d}u = -\frac{1}{x^2}{\rm d}x$.
For the case of ${\rm SL}(2,\bC)$-Higgs bundles,
we further assume
$\tr(\phi)=\phi_{11} = 0$.
In this case,
we note that~\eqref{Higgs transition}
is equivalent to the gauge transformation rule
of connection matrices with respect to~$E_\hbar$:
\begin{gather*}
-\frac{1}{\hbar}
\begin{bmatrix}
&(\phi_{12})_u\\1&
\end{bmatrix}{\rm d}u
=\frac{1}{\hbar}
\begin{bmatrix}
x&\hbar\\&\frac{1}{x}
\end{bmatrix}
\begin{bmatrix}
&(\phi_{12})_x\\1&
\end{bmatrix}{\rm d}x
\begin{bmatrix}
x&\hbar\\&\frac{1}{x}
\end{bmatrix}^{-1}-{\rm d}
\begin{bmatrix}
x&\hbar\\&\frac{1}{x}
\end{bmatrix}
\begin{bmatrix}
x&\hbar\\&\frac{1}{x}
\end{bmatrix}
^{-1}.
\end{gather*}
In other words,
\begin{gather*}
{\rm d}_u-\frac{1}{\hbar}
\begin{bmatrix}
&(\phi_{12})_u\\1&
\end{bmatrix}{\rm d}u
=\begin{bmatrix}
x&\hbar\\&\frac{1}{x}
\end{bmatrix}
\bigg({\rm d}_x+\frac{1}{\hbar}
\begin{bmatrix}
&(\phi_{12})_x\\1&
\end{bmatrix}{\rm d}x\bigg)
\begin{bmatrix}
x&\hbar\\&\frac{1}{x}
\end{bmatrix}^{-1}.
\end{gather*}
Therefore, $\nabla^\hbar = {\rm d} + \frac{1}{\hbar} \phi$ is a globally defined
connection in $E_\hbar$ for every $\hbar \ne 0$, and
\begin{gather*}
\big(\hbar\nabla^\hbar \big)\big|_{\hbar = 0}=\phi
\end{gather*}
is the original Higgs field.

Let us start with a particular
spectral curve, the algebraic curve
\begin{gather*}
\Sigma \subset \bF_2 = \bP\big(K_{\bP^1}\dsum \cO_{\bP^1}\big) = \overline{T^*\bP^1}
\end{gather*}
embedded in the Hirzebruch surface
with the defining equation
\begin{gather}
\label{Airy xy}
y^2-x = 0
\end{gather}
on $U_0$.
Here, $y$ is
a fiber coordinate of the cotangent bundle
$T^*\bP^1 \subset \bF^2$ over $U_0$. The~Hirzebruch surface is the
natural compactification of the cotangent bundle
$T^*\bP^1$, which is the
total space of~the canonical bundle $K_{\bP^1}$.
We denote by
$\eta \in H^0\big(T^*\bP^1, \pi^* K_{\bP^1}\big)$
the tautological $1$-form asso\-ci\-ated with the
projection $\pi\colon T^*\bP^1\lrar \bP^1$. It~is
expressed as $\eta = y{\rm d}x$ in terms of
the affine coordinates. The~holomorphic symplectic form
on $T^* \bP^1$ is given by $-{\rm d}\eta = {\rm d}x\wedge {\rm d}y$. The~$1$-form~$\eta$ extends to~$\bF_2$ as a meromorphic differential form and defines
 a divisor
\begin{gather*}
(\eta) = C_0-C_\infty,
\end{gather*}
where $C_0$ is the zero-section of $T^*\bP^1$, and
$C_\infty$ the section at infinity of
$\overline{T^*\bP^1}$. The~Picard group $\Pic(\bF_2)$ of the Hirzebruch
surface is generated by the class $C_0$ and a fiber
class $F$ of~$\pi$.
Although~\eqref{Airy xy} is a perfect
parabola in the affine plane, it has a quintic cusp
singularity at~$x=\infty$.
Let $(u,w)$ be a coordinate system
 on another affine
chart of $\bF_2$ defined by
\begin{gather*}
\begin{cases}
x = {1}/{u},\\
y\,{\rm d}x = v\, {\rm d}u, \qquad w = 1/v.
\end{cases}
\end{gather*}
Then $\Sigma$ in the $(u,w)$-plane is given by
\begin{gather}
\label{Airy uw}
w^2 = u^5.
\end{gather}
The expression of
$\Sigma\in \NS(\bF_2)$ as an element of
the N\'eron--Severy group of
$\bF_2$, in this case the same
as $\Pic(\bF_2)$, is thus given by
$\Sigma = 2C_0 + 5 F$.

Define a stable Higgs pair
$\left(E_0,\phi(q)\right)$ on $\bP^1$ with
$E_0=K_{\bP^1}^\half\dsum K_{\bP^1}^{-\half}
= \cO_{\bP^1}(-1)\dsum \cO_{\bP^1}(1)$
and
\begin{gather*}
\phi(q) = \begin{bmatrix}
& q\\
1
\end{bmatrix}\!\!\colon\
E_0\lrar E_0\tensor K_{\bP^1}(4).
\end{gather*}
Here, we choose a meromorphic quadratic differential
$q\in H^0(\bP^1,K_{\bP^1}(2)^{\tensor 2})$
that has a simple zero at $0\in \bP^1$ and
a pole of order $5$ at $\infty \in \bP^1$. Up to a
constant factor,
 there is only one such differential
\begin{gather*}
q = x({\rm d}x)^2 = \frac{1}{u^5}({\rm d}u)^2
\in H^0\big(\bP^1,K_{\bP^1}(2)^{\tensor 2}\big) = \bC.
\end{gather*}
The spectral curve $\Sigma$ of $(E_0,\phi(q))$ is given by
the characteristic equation
\begin{gather}
\label{char}
\det(\eta-\pi^*\phi) =\eta^2 -\pi^*\tr(\phi)+\pi^* \det(\phi)= \eta^2 - q=0
\end{gather}
in $\bF_2$. As explained above, $(E_0,\phi(q))$
uniquely determines a meromorphic \emph{oper}
\begin{gather}
\label{NAH-Airy}
\nabla^\hbar(q) =
{\rm d} +\frac{1}{\hbar} \phi(q) = {\rm d} +\frac{1}{\hbar}
\begin{bmatrix}
&q\\1
\end{bmatrix}
\end{gather}
on the extension $E_\hbar$ of~\eqref{extension}
over $\bP^1$~\cite{O-Paris}.
Indeed, the case $q=0$ of~\eqref{NAH-Airy}
for $\hbar = 1$
is the \emph{non-Abelian Hodge correspondence}
\cite{Donaldson, H1, Simpson}
\begin{gather*}
H^0(C,\End(E_0)\tensor K_{\bP^1})\owns
\begin{bmatrix}
0&0\\1&0
\end{bmatrix}
\Longleftrightarrow {\rm d}+
\begin{bmatrix}
0&0\\1&0
\end{bmatrix}\!
\colon\ E_1 \lrar E_1 \tensor K_{\bP^1}.
\end{gather*}
The \textbf{quantization} procedure
of this
paper is the following $\hbar$-deformation
\begin{gather*}
H^0(C,\End(E_0)\tensor K_{\bP^1}(m))\owns
\begin{bmatrix}
&q\\1&
\end{bmatrix}
\Longleftrightarrow {\rm d}+ \frac{1}{\hbar}
\begin{bmatrix}
&q\\1&
\end{bmatrix}\!
\colon\ E_\hbar \lrar E_\hbar \tensor K_{\bP^1}(m),
\end{gather*}
where $q\in H^0\left(C,K_{\bP^1}^{\tensor 2}
\tensor \cO_{\bP^1}(m)\right)$
is a meromorphic quadratic differential on $C$.

Let $\psi(x,\hbar)$ denote an analytic
function in $x$ with a formal parameter $\hbar$
such that
\begin{gather*}
\hbar \nabla_q^\hbar \begin{bmatrix}
-\hbar \psi'\\
\psi
\end{bmatrix} = 0,\qquad
\hbar\ne 0,
\end{gather*}
where $'$ denotes the
$x$ differentiation.
Then it satisfies a
Schr\"odinger equation
\begin{gather}
\label{Airy qc}
\bigg(\bigg(\hbar\frac{\rm d}{{\rm d}x}\bigg)^2-x\bigg)\psi(x,\hbar) = 0.
\end{gather}
The differential operator
\begin{gather*}
P(x,\hbar):=\bigg(\hbar\frac{\rm d}{{\rm d}x}\bigg)^2-x
\end{gather*}
quantizing the spectral
curve $\Sigma$ of~\eqref{Airy xy} is an example
of a
quantum curve.
Reflecting
the fact~\eqref{Airy uw} that $\Sigma$ has a
quintic cusp singularity at $x=\infty$,
\eqref{Airy qc} has an
\emph{irregular} singular point
of \emph{class} $\frac{3}{2}$ at $x=\infty$.

Let us recall the definition of regular and irregular singular points of
a second-order differential equation here.

\begin{Def}
Let
\begin{gather}\label{second}
\bigg(\frac{{\rm d}^2}{{\rm d}x^2}+a_1(x)\frac{\rm d}{{\rm d}x}+a_2(x)\bigg)\psi(x) = 0
\end{gather}
be a second-order differential equation
defined around a neighborhood of $x=0$ on a
small disc $|x|< \epsilon$ with meromorphic
coefficients $a_1(x)$ and $a_2(x)$ with poles
at $x=0$. Denote by $k$ (resp.~$\ell$) the
order of the pole of
$a_1(x)$ (resp.\ $a_2(x)$) at $x=0$.
If $k \le 1$ and $\ell\le 2$, then~\eqref{second}
has a~\textit{regular singular point} at $x=0$.
Otherwise, consider the \emph{Newton polygon}
 of the order of poles of the coefficients of~\eqref{second}. It~is the upper part of
 the convex hull of three
 points $(0,0)$, $(1, k)$, $(2,\ell)$. As a convention,
 if $a_j(x)$ is identically $0$, then
 we assign $-\infty$ as its pole order. Let $(1,r)$
 be the intersection point of
 the Newton polygon and the line $x=1$. Thus
 \begin{gather*}
r= \begin{cases}
 k, &2k\ge \ell,\\
\dfrac{\ell}{2}, &2k\le \ell.
 \end{cases}
 \end{gather*}
 The differential equation~\eqref{second}
 has an \textit{irregular singular point of class}
 $r-1$ at $x=0$ if $r>1$.
\end{Def}

The class $\frac{3}{2}$ at $\infty$
indicates how the asymptotic expansion
of the solution $\psi$ looks like.
Indeed, any non-trivial solution has
an essential singularity at $\infty$.
We note that every solution of~\eqref{Airy qc} is an entire function
for any value of $\hbar \ne 0$.
Applying our main result of this paper,
we construct a particular
\emph{all-order} asymptotic expansion
 of this entire solution
\begin{gather}
\label{Psi expansion}
\psi(x,\hbar) = \exp F(x,\hbar),\qquad
F(x,\hbar) := \sum_{m=0}^\infty \hbar^{m-1} S_m(x),
\end{gather}
valid for
$|\Arg(x)|<\pi$,
 and $\hbar >0$. Here, the first two terms of
 the asymptotic expansion are given by
\begin{gather}
\label{S0-Airy}
S_0(x) = \pm \frac{2}{3} x^{\frac{3}{2}},
\\
\label{S1-Airy}
S_1(x) = -\frac{1}{4} \log x.
\end{gather}
Although the \emph{classical limit}
$\hbar \rar 0$
of~\eqref{Airy qc} does not make sense
under the expansion~\eqref{Psi expansion},
the \emph{semi-classical limit} through the
\emph{WKB} analysis
\begin{gather}
\label{WKB-Airy}
\bigg[{\rm e}^{-S_1(x)}{\rm e}^{-\frac{1}{\hbar}S_0(x)}
\bigg(\hbar^2 \frac{{\rm d}^2}{{\rm d}x^2}-x\bigg)
{\rm e}^{\frac{1}{\hbar}S_0(x)}{\rm e}^{S_1(x)}\bigg]
\exp\bigg(\sum_{m=2}^\infty\hbar^{m-1} S_m(x)\bigg) = 0
\end{gather}
has a well-defined limit $\hbar \rar 0$. The~result
is $S_0'(x)^2 = x$,
which gives~\eqref{S0-Airy}, and also~\eqref{Airy xy}
by defining ${\rm d}S_0 = \eta$. The~vanishing of the
$\hbar$-linear terms of~\eqref{WKB-Airy}
is $2S_0'(x)S_1'(x) + S_0''(x) = 0$, which gives~\eqref{S1-Airy} above.

The entire solution in $x$ for $\hbar\ne 0$ and the choice of
$S_0(x)= - \frac{2}{3} x^{\frac{3}{2}}$
is called the \emph{Airy function}
\begin{gather}
\label{Airy}
\operatorname{Ai}(x,\hbar) =\frac{1}{2\pi} \hbar^{-\frac{1}{6}}
\int_{-\infty}^\infty
\exp\bigg({\frac{{\rm i}px}{\hbar^{2/3}}}+{{\rm i}\frac{p^3}{3}}\bigg){\rm d}p.
\end{gather}
The surprising discovery of Kontsevich~\cite{K1992} (cf.~\cite{DVV,W1991}) is that $S_m(x)$ for
$m\ge 2$ has the following \emph{closed} formula
\begin{gather}
\label{Airy Sm}
S_m(x) := \sum_{2g-2+n=m-1} \frac{1}{n!}F_{g,n}^{\text{Airy}}(x),
\\
\label{Airy principal}
F_{g,n}^{\text{Airy}}(x):= \frac{(-1)^n}{2^{2g-2+n}}
x^{-\frac{(6g-6+3n)}{2}}
\sum_{\substack{d_1+\dots+d_n\\=3g-3+n}}
\la \tau_{d_1}\cdots \tau_{d_n}\ra_{g,n}
\prod_{i=1}^n |(2d_i-1)|!! ,
\end{gather}
where the coefficients
\begin{gather*}
\la \tau_{d_1}\cdots \tau_{d_n}\ra_{g,n} =\int_{\Mbar_{g,n}}
\psi_1 ^{d_1}\cdots \psi_{n}^{d_n}
\end{gather*}
are the cotangent class intersection
numbers on the moduli space
$\Mbar_{g,n}$ of stable curves of
genus~$g$ with $n$ non-singular
marked points. The~expansion
coordinate $x^{\frac{3}{2}}$ of~\eqref{Airy principal}
indicates the class of the irregular singularity of
the Airy differential equation.

Although~\eqref{Airy Sm} is not
a generating function of all intersection numbers,
the quantum cur\-ve~\eqref{Airy xy} alone actually determines
every intersection number $\la \tau_{d_1}\cdots \tau_{d_n}\ra_{g,n}$.
This mechanism is topological recursion of~\cite{EO1}. PDE recursion
 computes \emph{free energies}
\begin{gather}
\label{Airy free energy}
F_{g,n}^{\text{Airy}}(t_1,\dots,t_n)
:= \frac{(-1)^n}{2^{2g-2+n}}
\sum_{\substack{d_1+\dots+d_n\\=3g-3+n}}
\la \tau_{d_1}\cdots \tau_{d_n}\ra_{g,n}
\prod_{i=1}^n \left(\frac{t_i}{2}\right)^{2d_i+1} |(2d_i-1)|!!
\end{gather}
as a function in $n$ variables
from $\Sigma$ through the process of
blow-ups of $\bF_2$, and the exterior derivative of free energies
are the symplectic invariants of~\cite{EO1}.

\subsection{Blowing up a Hirzebruch surface}

Let us now give a detailed algebraic geometry
procedure for this
example.
We start with the spectral curve $\Sigma$
of
\eqref{Airy xy}. Our goal is to
come up with~\eqref{Airy qc}. The~first step is to blow up $\bF_2$ and
to construct a normalization of $\Sigma$. The~construction of $\operatorname{Bl}(\overline{T^*C})$ is
given in Definition~\ref{def:Bl}. It~is the minimal
resolution of the divisor
 \begin{gather*}
 \Sigma = \left(\det(\eta - \pi^*\phi)\right)_0
 \end{gather*}
 of the characteristic polynomial. The~discriminant of the
defining equation~\eqref{char}
of the spectral curve is
\begin{gather*}
-\det(\phi) = x({\rm d}x)^2 = \frac{1}{u^5}({\rm d}u)^2.
\end{gather*}
It has a simple zero at $x=0$ and a pole of order
$5$ at $x=\infty$. The~\emph{geometric genus formula}~\eqref{pg} for the general base curve
$C$ reads
\begin{gather*}
g(\widetilde{\Sigma}) = 2g(C)-1+\half \delta,
\end{gather*}
where $\delta$ is the sum of the number of cusp singularities
of $\Sigma$ and the ramification points
of~$\pi$: $\Sigma\lrar C$ (Theorem~\ref{thm:geometric genus formula}).
In our case, it tells us that
$\widetilde{\Sigma}$ is a non-singular curve
of genus $0$, i.e., a~$\bP^1$,
after blowing up $\lfloor \frac{5}{2}\rfloor = 2$ times.

The center of blow-up is $(u,w)=(0,0)$ for
the first time. Put $w=w_1 u$,
and denote by~$E_1$ the exceptional
divisor of the first blow-up. The~proper transform
of $\Sigma$ for this blow-up,
$w_1^2 = u^3$, has a cubic
cusp singularity, so we blow up again at
the singular point. Let $w_1=w_2 u$,
and denote by $E_2$ the exceptional
divisor created by the second blow-up. The~self-intersection of~the proper transform
of $E_1$ is $-2$.
We then obtain the
desingularized curve $\TSig$,
locally given by~$w_2 ^2 = u$. The~proof of
Theorem~\ref{thm:geometric genus formula}
also tells us that $\TSig\lrar \bP^1$
is ramified at
two points. Choose the affine coordinate
 $t=2 w_2$
 of the exceptional divisor added at the second
 blow-up. Our choice of the constant factor
 is to make the formula the same as in~\cite{DMSS}.
We have
\begin{gather}
\label{Airy xy in t}
\begin{cases}
x = \dfrac{1}{u} = \dfrac{1}{w_2^2}
 = \dfrac{4}{t^2},\\
y = -\dfrac{u^2}{w} = - \dfrac{u^2}{w_2u^2}
= -\dfrac{2}{t}.
\end{cases}
\end{gather}
In the $(u,w)$-coordinate, we see that the parameter
$t$ is a normalization parameter of the
quintic cusp singularity:
\begin{gather*}
\begin{cases}
u = \dfrac{t^2}{4},
\\[1.5ex]
w = \dfrac{t^5}{32}.
\end{cases}
\end{gather*}
Note that $\TSig$ intersects transversally
with the proper transform of $C_\infty$. The~blow-up space
$\operatorname{Bl}(\bF^2)$ is the result of the twice blow-ups of the Hirzebruch surface:
\begin{gather}
 \label{blow-up P1}
\begin{split}
&\xymatrix{
\widetilde{\Sigma}
\ar[dd]_{\tilde{\pi}} \ar[rr]^{\tilde{i}}\ar[dr]^{\nu}&&\operatorname{Bl}(\overline{T^*\bP^1})
\ar[dr]^{\nu}
\\
&\Sigma \ar[dl]_{\pi}\ar[rr]^{i} &&
\overline{T^*\bP^1}=\bF_2. \ar[dlll]^{\pi}
\\
\bP^1 }
\end{split}
\end{gather}

\begin{figure}[hbt]
\centering
\includegraphics[scale=0.95]{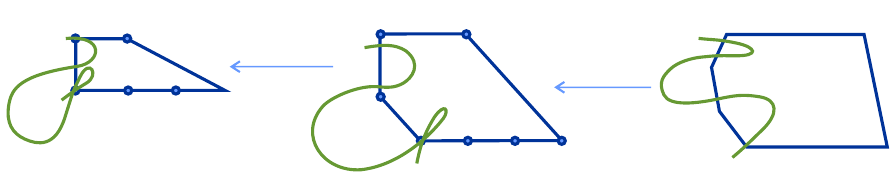}
\put(-382,54){\makebox(-0,0)[lb]{\footnotesize $0$}}
\put(-395,30){\makebox(-0,0)[lb]{\small $P$}}
\put(-395,5){\makebox(-0,0)[lb]{$\Sigma$}}
\put(-367,67){\makebox(-0,0)[lb]{\small $C_0$}}
\put(-364,53){\makebox(-0,0)[lb]{\footnotesize $-2$}}
\put(-361,28){\makebox(-0,0)[lb]{\footnotesize $+2$}}
\put(-335,27){\makebox(-0,0)[lb]{\small $C_\infty$}}
\put(-295,54){\makebox(-0,0)[lb]{\footnotesize ${\rm Bl}_p \overline{T^*C}$}}
\put(-246,48){\makebox(-0,0)[lb]{\small $F$}}
\put(-224,26){\makebox(-0,0)[lb]{\footnotesize $-1$}}
\put(-234,48){\makebox(-0,0)[lb]{\footnotesize $-1$}}
\put(-234,15){\makebox(-0,0)[lb]{\small $E$}}
\put(-215,6){\makebox(-0,0)[lb]{\small $Q$}}
\put(-201,6){\makebox(-0,0)[lb]{\footnotesize $+1$}}
\put(-179,5){\makebox(-0,0)[lb]{\small $C_\infty$}}
\put(-217,69){\makebox(-0,0)[lb]{\small $C_0$}}
\put(-74,26){\makebox(-0,0)[lb]{\footnotesize $-1$}}
\put(-81,48){\makebox(-0,0)[lb]{\footnotesize $-1$}}
\put(-80,40){\makebox(-0,0)[lb]{\footnotesize $-2$}}
\put(-90,20){\makebox(-0,0)[lb]{\small $E_2$}}
\put(-100,40){\makebox(-0,0)[lb]{\small $E_1$}}
\put(-100,58){\makebox(-0,0)[lb]{\small $F$}}
\put(-52,69){\makebox(-0,0)[lb]{\small $C_0$}}
\put(-46,16){\makebox(-0,0)[lb]{\small $o$}}
\put(-48,3){\makebox(-0,0)[lb]{\small $C_\infty$}}
\put(-68,5){\makebox(-0,0)[lb]{\small $R$}}
\put(-164,45){\makebox(-0,0)[lb]{\footnotesize ${\rm Bl}_Q({\rm Bl}_p \overline{T^*C})$}}
\caption{}
\end{figure}

Topological recursion~\eqref{integral TR}
 requires
a globally defined meromorphic $1$-form $W_{0,1}$
on $\widetilde{\Sigma}$ and
a~symmetric meromorphic $2$-form $W_{0,2}$
on the product $\widetilde{\Sigma}
\times \widetilde{\Sigma}$ as the initial data.
We choose
\begin{gather}
\begin{cases}
\label{W0102}
W_{0,1} = \tilde{i}^*\nu^*\eta,\\
W_{0,2} = {\rm d}_1{\rm d}_2 \log E_{\widetilde{\Sigma}},
\end{cases}
\end{gather}
where
$E_{\widetilde{\Sigma}}$ is a normalized
Riemann prime form on
${\widetilde{\Sigma}}$
(see \cite[Section~2]{OM1}). The~form $W_{0,2}$
depends only on the intrinsic geometry of
the smooth curve $\widetilde{\Sigma}$. The~geometry of~\eqref{blow-up P1}
is encoded in~$W_{0,1}$.

Now we apply PDE
recursion~\eqref{differential TR}
to the geometric data~\eqref{blow-up P1} and~\eqref{W0102}.
We claim that topological recursion of~\cite{EO1} for the geometric data we
are considering
now is exactly the same as the recursive
equation of \cite[equation~(6.12)]{DMSS} applied to
the curve~\eqref{Airy xy in t}
\emph{realized as a plane parabola}
in~$\bC^2$.
This is because topological
recursion~\eqref{integral TR} has two
residue contributions, one each from $t=0$ and
$t=\infty$. As proved in \cite[Section~6]{DMSS},
the integrand on the right-hand side
of the recursion formula~\cite[equation~(6.12)]{DMSS} does not have any
pole at $t=0$. Therefore, the residue contribution
from this point is $0$.
PDE recursion is obtained by
deforming the contour of integration to enclose
only poles of the differential forms $W_{g,n}$.
Since $t=0$ is a regular point, the two methods
have no difference.

The $W_{0,2}$ of~\eqref{W0102}
is simply $\frac{{\rm d}t_1\cdot {\rm d}t_2}{(t_1-t_2)^2}$
because $\widetilde{\Sigma}\isom \bP^1$.
Since $t$ of~\eqref{Airy xy in t}
is a normalization coordinate,
we have
\begin{gather*}
W_{0,1} = \tilde{i}^*\nu^*(\eta) = y(t)\,{\rm d}x(t)
= \frac{16}{t^4},
\end{gather*}
in agreement of \cite[equation~(6.8)]{DMSS}.
Noticing that the solution to
topological recursion
is unique from the
initial data, we conclude that
\begin{gather*}
{\rm d}_1\cdots {\rm d}_n F_{g,n}^\Airy \big(x(t_1),
\dots,x(t_n)\big) = W_{g,n}.
\end{gather*}
By setting the constants of integration by
integrating from $t=0$ for
PDE recursion, we obtain
the expression~\eqref{Airy free energy}.
Then its principal specialization
gives~\eqref{Airy principal}. The~equivalence of PDE
 recursion and the quantum curve
equation Theorem~\ref{thm:WKB} then proves
\eqref{Airy qc} with the expression of
\eqref{Psi expansion} and~\eqref{Airy Sm}.

In this process, what is truly amazing is that
the single differential equation
\eqref{Airy qc}, which is our
quantum curve, knows everything
about the free energies~\eqref{Airy free energy}.
This is because we can recover
the spectral curve $\Sigma$ from the quantum
curve. Then the procedures we need to apply,
the blow-ups and PDE recursion,
are canonical. Therefore, we actually recover
\eqref{Airy free energy} as explained above.

It is surprising to see that a simple entire function
\eqref{Airy}
contains so much geometric information.
Our expansion~\eqref{Psi expansion} is
an expression of this entire function viewed
from its essential singularity. We can extract
 rich information of the solution by
restricting the region where the asymptotic
expansion is valid.
If we consider~\eqref{Psi expansion} only as a
formal expression in $x$ and $\hbar$, then
we cannot see how the coefficients are
related to quantum invariants.
Topological recursion~\cite{EO1}
is a key to connect the two worlds:
the world of quantum invariants, and the world
of holomorphic functions and differentials.
This relation is also knows as a
\textit{mirror symmetry}, or in analysis,
 simply as the \emph{Laplace transform}. The~intersection numbers
$\la \tau_{d_1}\cdots \tau_{d_n}\ra_{g,n}$
belong to
the $A$-model, while the spectral
curve $\Sigma$ of~\eqref{Airy xy}
and free energies
belong to the $B$-model.
We consider~\eqref{Airy free energy} as
an example of
the Laplace transform, playing the role of
mirror symmetry~\cite{OM3, DMSS}.
In the context of Hitchin theory, mirror symmetry also plays a different role
through Langland duality (cf.~\cite{HM,KW,W2008-2,W2010}). It is unclear to us how these
two different mirror symmetries are interrelated.

\subsection*{Acknowledgements}
The authors wish to thank Philip Boalch
for many useful discussions and comments
on their work on quantum curves. In particular,
his question proposed at the American
Institute of~Mathematics Workshop,
\emph{Spectral data for Higgs bundles} in
September--October 2015, was critical for the
development of the theory presented in this paper.

 This joint research is
 carried out while the authors
 have been staying in the following ins\-ti\-tutions
 in the last several years:
the American Institute of Mathematics in
California,
the Banff International Research Station,
Institutul de
 Matematic\u{a} ``Simion Stoilow''
 al Academiei Rom\newtie{a}ne,
 Institut Henri Poincar\'e,
 Institute for Mathematical
Sciences at the National University of Singapore,
 Kobe University,
 Leibniz Universit\"at Hannover,
 Lorentz Center Leiden,
 Mathematisches Forschungsinstitut Oberwolfach,
 Max-Planck-Institut f\"ur Mathematik-Bonn,
 and
 Osaka City University Advanced
 Mathematical Institute.
 Their generous financial
 support, hospitality, and stimulating research
 environments are greatly appreciated.

The authors also thank
J\o rgen Andersen,
Vincent Bouchard,
Tom Bridgeland,
Bertrand Eynard,
Edward Frenkel,
Tam\'as Hausel,
Kohei Iwaki,
Maxim Kontsevich,
Laura Schaposnik,
Carlos Simpson,
Albert Schwarz,
Yan Soibelman,
Ruifang Song,
J\"org Teschner,
and Richard Wentworth,
for useful comments, suggestions, and discussions.
During the preparation of
this work, the research of O.D.\ was supported by
 GRK 1463 \emph{Analysis,
Geometry, and String Theory} at the
Leibniz Universit\"at
 Hannover, and a grant from
 MPIM-Bonn. The~research of M.M.\ was supported
by IH\'ES, MPIM-Bonn,
NSF grants DMS-1104734, DMS-1309298,
DMS-1619760, DMS-1642515,
and NSF-RNMS: Geometric Structures And
Representation Varieties (GEAR Network,
DMS-1107452, 1107263, 1107367).

\addcontentsline{toc}{section}{References}
\LastPageEnding

\end{document}